\documentclass[11pt, reqno]{amsart}

\usepackage{enumitem}
\usepackage{subfiles}
\usepackage[margin=1in]{geometry} 
\usepackage{amsmath,amsthm,amssymb, multicol, array, mathrsfs, esint, yhmath, verbatim, dsfont}

\usepackage[dvipsnames]{xcolor}
\usepackage[linktocpage=true]{hyperref}
\hypersetup{
	colorlinks=true,
	linkcolor=blue,
	citecolor=blue,
	urlcolor=blue
}
\usepackage{listings}

\usepackage{hyperref,cleveref}

\newcommand{\eps}{\varepsilon}

\newcommand{\ty}{\widetilde{y}}
\newcommand{\tu}{\widetilde{u}}

\newcommand{\subtitle}[1]{%
  \posttitle{%
    \par\end{center}
    \begin{center}\large#1\end{center}
    \vskip0.5em}%
}

\theoremstyle{plain}
\newtheorem{thm}{Theorem}[section]
\newtheorem{theorem}[thm]{Theorem}
\newtheorem*{thm*}{Theorem}
\newtheorem{lemma}[thm]{Lemma}
\newtheorem{proposition}[thm]{Proposition}

\newtheorem*{lem*}{Lemma}

\newtheorem*{ex*}{Example}
\newtheorem{definition}{Definition}[section]
\newtheorem{corollary}[thm]{Corollary}
\newtheorem{remark}{Remark}

\usepackage[framemethod=default]{mdframed}

\numberwithin{equation}{section}
 
 \newcommand{\be}{\begin{equation}}
\newcommand{\ee}{\end{equation}}

\newcommand{\mR}{\mathbb{R}}
\newcommand{\mC}{\mathbb{C}}
\newcommand{\mN}{\mathbb{N}}

\newcommand{\hy}{\hat y}

\newcommand{\hu}{\hat u}


\title{On the local null controllability of a viscous Burgers' system in finite time}

\author[H.-M. Nguyen]{Hoai-Minh Nguyen}
\address[H.-M. Nguyen]{Sorbonne Universit\'e, Universit\'e Paris Cit\'e, CNRS, INRIA, \newline \indent 
Laboratoire Jacques-Louis Lions, LJLL, F-75005 Paris, France
}
\email{hoai-minh.nguyen@sorbonne-universite.fr}

\author[M.-N. Tran]{Minh-Nguyen Tran}
\address[M.-N. Tran]{Sorbonne Universit\'e, Universit\'e Paris Cit\'e, CNRS, INRIA, \newline \indent 
Laboratoire Jacques-Louis Lions, LJLL, F-75005 Paris, France
}
\email{minh-nguyen.tran@sorbonne-universite.fr}

\begin{document}

\begin{abstract}
This paper is devoted to the local null controllability of the Burgers control system $y_t - y_{xx} + y y_x = u(t)$ on a bounded interval imposed by the zero Dirichlet boundary condition. It is known from the work of Marbach \cite{Marbach18} that this control system is not locally null controllable in small time. In this paper, we prove that the system is not locally null controllable in {\it finite time} as well. Our approach is inspired by the works of Coron, Koenig, and Nguyen \cite{Coron-Koenig-Nguyen}, and Nguyen \cite{Nguyen} on the controllability of the KdV system and is different from the one of Marbach. 
\end{abstract}


\maketitle

\tableofcontents

\section{Introduction}

\subsection{Introduction and statement of the main result}

In this paper, we study the local null controllability in finite time of a viscous Burgers control system using internal controls depending only on {\it time}. More precisely, for $T > 0$, we consider the following control system:
        \be
        \begin{cases}   \label{ViscousBurgers1D}
            y_t (t, x) - y_{xx} (t, x) + y y_x (t, x)= u(t) \quad &\text{ in } (0, T) \times (0, 1), \\[5pt]
            y(t, 0) = y(t, 1) = 0 &\text{ in } (0, T), \\[5pt]
            y(0, x) = y_0(x) &\text{ in } (0, 1),
        \end{cases}
        \ee
where $y_0$ is the initial datum and $u$ is the control. The control is thus internal and depends only on time. 

For $T>0$, set 
\begin{align}   \label{YT}
Y_T = C([0, T], L^2(0, 1)) \cap L^2((0, T), H^1(0, 1)),
\end{align}
and equip this space with the following norm 
\begin{align}	\label{norm-YT}
\| y \|_{Y_T} = \| y \|_{C([0, T], L^2(0, 1))} + \| y\|_{L^2((0, T), H^1(0, 1))}.
\end{align}
It is known that for every $y_0 \in L^2(0, 1)$ and $u \in L^1(0, T)$, there exists a unique weak solution $y \in Y_T$ of \eqref{ViscousBurgers1D} (see, e.g., \Cref{lem-ap1} in \Cref{wp-bg}).

\medskip
We first recall several notions on the local null controllability of \eqref{ViscousBurgers1D} considered in this paper. The first one is on the null controllability in time $T>0$. 

\begin{definition} Let $T>0$. System \eqref{ViscousBurgers1D} is said to be \textit{locally null controllable in time $T$} if for every $\varepsilon > 0$, there is $\delta > 0$ such that for every $y_0 \in L^2(0, 1)$ with $\| y_0\|_{L^2(0, 1)} < \delta$, there exists a real-valued function $u \in L^1(0, T)$ with $\| u \|_{L^1(0, T)} < \varepsilon$ such that
\begin{align*}
    y(T, \cdot) = 0,
\end{align*}
where $y \in Y_T$ is the unique solution of $(\ref{ViscousBurgers1D})$.
\end{definition}

Here are two related notions. 

\begin{definition} System \eqref{ViscousBurgers1D} is said to be small-time locally null controllable if for every $T>0$, the system is locally null controllable in time $T$. 
\end{definition}

\begin{definition} System \eqref{ViscousBurgers1D} is said to be finite-time locally null controllable if there exists $T>0$ such that the system is locally null controllable in time $T$.
\end{definition}

We next discuss the controllability of the linearized system of \eqref{ViscousBurgers1D} around 0, which  is given by 
\begin{align}   \label{linearizedBurgers}
\begin{cases}
    y_{t} (t, x) - y_{xx} (t, x) = u(t) \quad &\text{ in } (0, T) \times (0, 1), \\[6pt]
    y(t, 0) = y(t, 1) = 0 &\text{ in } (0, T), \\[6pt]
    y(0, x) = y_0(x) & \mbox{ in } (0, 1). 
\end{cases}
\end{align}
Let $\{e_k \}_{k \in \mN^*}$ be the Hilbert basis of $L^2(0, 1)$ formed by the eigenfunctions of the Laplacian operator on the interval $(0, 1)$, namely
\begin{align} \label{def-ek}
    e_k (x) = \sqrt{2} \sin (k \pi x) \quad \mbox{ for } k \in \mN^* \text{ and } x \in (0, 1).  
\end{align}
Denote by $\mathscr{M}$ the  subspace  of $L^2(0, 1)$ given by 
\begin{align}   \label{unreachable space}
    \mathscr{M} = \overline{\mathrm{span} \left\{e_k \; | \; k \in \mathbb{N}^*, \; k  \text{ even} \right\}}^{\| \cdot \|_{L^2(0, 1)}}.
\end{align}
For each $k \in \mathbb{N}^*$, set 
\begin{align} \label{def-phik}
\phi_k(t, x) = e^{\pi^2 k^2 t} \sin(k \pi x) \quad \mbox{ for } (t, x) \in [0, + \infty) \times [0, 1]. 
\end{align}
Then 
\begin{align} \label{prop-phik}
\begin{cases}
 \phi_{k,t} (t, x) +  \phi_{k,xx} (t, x) = 0  &\mbox{ in } (0, +\infty) \times (0, 1), \\[6pt]
\phi_k (t, 0) = \phi_k(t, 1) = 0 &\mbox{ in } [0, + \infty). 
\end{cases}
\end{align}

Given $y_0 \in L^2(0, 1)$ and $u \in L^1(0, T)$, let $y \in Y_T$ be the unique weak solution of  \eqref{linearizedBurgers}. By integration by parts, we derive from \eqref{linearizedBurgers} and \eqref{prop-phik} that 
\begin{align}   \label{ipp-linear}
    e^{ \pi^2 k^2 T} \int_0^1  y(T, x)  \sin(k \pi x)  dx - \int_0^1 y_0(x)  \sin(k \pi x)  dx =  \int_0^T \int_0^1 u(t) \phi_k(t, x)  dx  dt.
\end{align}
Since the right-hand side of \eqref{ipp-linear} is 0 when $k$ is even, it follows from \eqref{ipp-linear} that 
\be
\mbox{ if $y(T, \cdot) = 0$, then  $y_0 \in \mathscr{M}^{\bot}$}.
\ee 
Therefore, for every $y_0 \in \mathscr{M} \setminus \{0\}$ and for every $T>0$, there does not exist any control $u \in L^1(0, T)$ steering system \eqref{linearizedBurgers} from $y_0$ at time $0$ to $0$ at time $T$. Consequently, the linearized system \eqref{linearizedBurgers} is not null controllable in any positive time.

\medskip 
It was shown by Marbach \cite{Marbach18} that \eqref{ViscousBurgers1D} is not locally null controllable in small time using controls with small norms in $L^2(0, T)$. The analysis in \cite{Marbach18} is somehow indirect.  
A change of variables is first used to obtain a Burgers control system with small viscosity in a fixed time interval $(0, 1)$. The viscosity of the new system is equivalent to the time in the initial one. The result obtained is then derived from the fact that the new system is not locally controllable for small viscosity. The starting point of the analysis of the new system, whose linearized system is not controllable, is the so-called power series expansion method introduced by Coron and Cr\'epeau \cite{Coron-Crepeau}. The obstruction of the controllability is on the second order.  The analysis of this part is quite involved and uses the theory of weakly singular integral operators where the smallness of the viscosity plays an essential role. The choice of the test function for the obstruction in \cite{Marbach18} is somehow mysterious for us.

\medskip 
In this paper, we prove that the Burgers system \eqref{ViscousBurgers1D} is not even locally null controllable in {\it finite time} using controls with small norms in $[H^{\frac{3}{4}}(0, T)]^*$ \footnote{In this paper, for $s > 0$ and for an open interval $I$ of $\mR$,  $[H^s(I)]^*$ denotes the dual space of $H^s(I)$ with the corresponding norm.}. Our analysis is inspired from the work of Coron, Koenig, and Nguyen \cite{Coron-Koenig-Nguyen}, and Nguyen \cite{Nguyen} on the controllability of the KdV system. The analysis is thus different from and more direct than the one used in \cite{Marbach18}. The norm used for the smallness of the control in this paper is better than the one considered in \cite{Marbach18}.  

\medskip 
Here is the main result. 

 \begin{theorem}    \label{Mainresult}
       For every $T > 0$, there is $\varepsilon_T > 0$ such that for every $\delta > 0$, there exists $y_0 \in L^2(0, 1)$ with {$\|y_0\|_{L^2(0, 1)} < \delta$} such that for all $u \in L^1(0, T)$ with $\|u\|_{[H^{\frac{3}{4}}(0, T)]^*} < \varepsilon_T$, we have
            \begin{align*}
                y(T, \cdot) \neq 0,
            \end{align*}
            where $y \in {{Y}_T}$ is the unique solution of $\eqref{ViscousBurgers1D}$.
    \end{theorem}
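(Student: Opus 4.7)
The plan is to argue by contradiction using a test-function / power-series-expansion strategy in the spirit of \cite{Coron-Koenig-Nguyen, Nguyen}. Fix $T > 0$ and a small $\varepsilon_T > 0$ to be chosen later. For each $\delta > 0$, I will exhibit $y_0 \in L^2(0,1)$ with $\|y_0\|_{L^2} < \delta$ such that, under the contradiction hypothesis that some $u$ with $\|u\|_{[H^{3/4}(0,T)]^*} < \varepsilon_T$ yields $y(T,\cdot) = 0$, a quantitative obstruction is violated. Multiplying \eqref{ViscousBurgers1D} by $\phi_k(t,x) = e^{\pi^2 k^2 t}\sin(k\pi x)$ for even $k$, integrating by parts in $(0,T)\times(0,1)$, and using \eqref{prop-phik}, the Dirichlet conditions, $\int_0^1\sin(k\pi x)\,dx=0$ and $y(T,\cdot)=0$, yields the nonlinear analogue of \eqref{ipp-linear}:
\[
\int_0^1 y_0(x)\sin(k\pi x)\,dx = -\frac{k\pi}{2}\int_0^T e^{\pi^2 k^2 t}\int_0^1 y(t,x)^2 \cos(k\pi x)\,dx\,dt.
\]

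I specialize to $k=2$ and choose $y_0 = \alpha e_2$ with $\alpha$ (sign and size to be fixed), so that the left-hand side equals $\alpha/\sqrt{2}$. I then split $y = y_L^0 + v + w$, where $y_L^0(t,x) = \alpha e^{-4\pi^2 t}e_2(x)$ is the free heat evolution of $y_0$; $v$ solves the linear heat equation with control $u$ and zero initial datum, which excites only odd Fourier modes since $\int_0^1 e_k\,dx=0$ for $k$ even, with explicit coefficients $a_j(t) = \tfrac{2\sqrt{2}}{j\pi}\int_0^t e^{-j^2\pi^2(t-s)}u(s)\,ds$ for odd $j$; and $w := y - y_L^0 - v$ is the nonlinear remainder solving $w_t - w_{xx} = -yy_x$, $w(0,\cdot)=0$. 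The trigonometric identities $\int_0^1\sin^2(2\pi x)\cos(2\pi x)\,dx = 0$ and $\int_0^1\sin(2\pi x)\sin(j\pi x)\cos(2\pi x)\,dx = 0$ for odd $j$ show that the contributions of $(y_L^0)^2$ and $2y_L^0 v$ to $\int_0^1 y^2\cos(2\pi x)\,dx$ vanish identically, and product-to-sum reduction gives
\[
\int_0^1 v^2\cos(2\pi x)\,dx = -\frac{a_1^2}{2} + 2\sum_{n\ge1}a_{2n-1}a_{2n+1},
\]
plus the $w$-dependent remainder.

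The technical heart of the proof is the bilinear estimate
\[
\left|\int_0^T e^{4\pi^2 t}\int_0^1 v^2\cos(2\pi x)\,dx\,dt\right| \le C_T\|u\|_{[H^{3/4}(0,T)]^*}^2.
\]
For the dominant diagonal piece, setting $G(t) = \int_0^t e^{\pi^2 s}u(s)\,ds$ gives
\[
\int_0^T e^{4\pi^2 t}a_1(t)^2\,dt = \frac{8}{\pi^2}\int_0^T e^{2\pi^2 t}G(t)^2\,dt,
\]
and the map $u \mapsto F$ with $F(t) = e^{\pi^2 t}G(t)$ can be rewritten as an integral operator with kernel supported on $\{s\le t\}$ and equal there to $e^{\pi^2(s+t)}$. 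Its adjoint $g \mapsto e^{\pi^2 s}\int_s^T e^{\pi^2 t}g(t)\,dt$ maps $L^2(0,T)$ into $H^1(0,T)$ (its derivative is a smooth multiple of $g$ plus a multiple of itself), hence into $H^{3/4}(0,T)$ by continuous embedding, so $u\mapsto F$ is bounded from $[H^{3/4}(0,T)]^*$ into $L^2(0,T)$, producing the sharp exponent $3/4$. The off-diagonal pairs $a_{2n-1}a_{2n+1}$ have kernels with decaying exponential factor $e^{(4-(2n-1)^2-(2n+1)^2)\pi^2 t}$ (the exponent is $\le -6$ for $n\ge 1$) and are handled analogously with smaller constants. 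The $w$-contributions are absorbed using the well-posedness bound $\|w\|_{Y_T}\le C_T(|\alpha|+\|u\|_{[H^{3/4}]^*})^2$ coming from \Cref{wp-bg}, giving an $O((|\alpha|+\|u\|)^3)$ error.

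Combining these ingredients yields
\[
\frac{|\alpha|}{\sqrt{2}} \le C_T\|u\|_{[H^{3/4}(0,T)]^*}^2 + C_T\bigl(|\alpha|+\|u\|_{[H^{3/4}(0,T)]^*}\bigr)^3,
\]
from which, for $\varepsilon_T$ sufficiently small and $\alpha$ chosen of appropriate sign and magnitude depending on $\delta$, a contradiction will follow. The hardest part is the bilinear estimate above: the exponential weight $e^{4\pi^2 t}$ exactly balances the parabolic decay of the first odd mode, so the relevant bilinear kernel has only Lipschitz regularity on the diagonal, and the $3/4$ exponent is sharp — this mimics the analogous estimate for KdV in \cite{Coron-Koenig-Nguyen, Nguyen}. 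A secondary delicate point is the quantifier structure: since $\varepsilon_T$ is fixed independently of $\delta$, for very small $\delta$ the naive choice $|\alpha|=O(\delta)$ would put $\alpha$ below the threshold $C_T\varepsilon_T^2$ where the inequality above is directly exploitable; to close the argument for all $\delta>0$ I expect one must exploit the \emph{sign} of the leading quadratic form (the $(1,1)$ kernel $K(s,r)=\mathrm{e}^{\pi^2(s+r)}\int_{\max(s,r)}^T \mathrm{e}^{2\pi^2 t}dt$ is positive definite in $u$), showing that only one sign of $\alpha$ is compatible with the identity modulo higher-order errors, and thereby obstructing $y_0=\alpha e_2$ of the opposite sign for arbitrarily small $|\alpha|$.
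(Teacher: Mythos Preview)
Your overall strategy---test against $\phi_2$, use the resulting identity for $\int_0^1 y_0\sin(2\pi x)\,dx$, and expand $y$ around the linearized dynamics---matches the paper's, but there is a genuine gap precisely where you flag it. The displayed inequality
\[
\frac{|\alpha|}{\sqrt{2}} \le C_T\|u\|_{[H^{3/4}(0,T)]^*}^2 + C_T\bigl(|\alpha|+\|u\|_{[H^{3/4}(0,T)]^*}\bigr)^3
\]
cannot yield a contradiction for arbitrarily small $\delta$ with $\varepsilon_T$ fixed: nothing forbids $\|u\|_{[H^{3/4}]^*}\sim |\alpha|^{1/2}$. You propose to recover by exploiting the sign of the quadratic form, but you only argue that the diagonal $(1,1)$ kernel is positive, not that the full tridiagonal form $-\tfrac12 a_1^2+\sum_n a_{2n-1}a_{2n+1}$ is sign-definite; and even if it were, coercivity would hold only in a \emph{weaker} norm than $[H^{3/4}]^*$ (in the paper it is $[H^{5/4}]^*$), so a cubic remainder estimated purely in $\|u\|_{[H^{3/4}]^*}$ cannot be absorbed into it.

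The paper closes both gaps with ingredients absent from your sketch. First, coercivity is not proved for arbitrary $u$: one extends $u$ by zero on $(T,2T)$, appends an auxiliary control on $(2T,3T)$ driving the linearized solution $y_1$ to $0$ at time $3T$, and applies Parseval in time to the resulting $\tilde u$; under $y_1(3T,\cdot)=0$ the quadratic form becomes $\int_\mR |\hat{\tilde u}(z+2\pi^2 i)|^2\,\Omega_2(z)\,dz$ with an explicit real multiplier $\Omega_2$, and a substantial computation (\Cref{as-behavior}) shows $\Omega_2(z)<0$ for every $z\in\mR$, giving $\int y_1^2\phi_{2,x}\le -C\|\tilde u\|^2_{[H^{5/4}]^*}$. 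Second, the remainder is expanded one order further, $y=y_1+y_2+\delta y$, and $y_2,\delta y$ are bounded in $L^2((0,T)\times(0,1))$ by \emph{mixed} products of $\|u\|_{[H^{5/4}]^*}$ and $\|u\|_{[H^{3/4}]^*}$ (\Cref{lem-y2small}, \Cref{lem-dysmall}, resting on \Cref{pro-y1}); this is what allows the final absorption $\varepsilon+\|\tilde u\|^2_{[H^{5/4}]^*}\le C\varepsilon_0^2\|\tilde u\|^2_{[H^{5/4}]^*}$. Your single remainder $w$ with $\|w\|_{Y_T}\le C(|\alpha|+\|u\|_{[H^{3/4}]^*})^2$ is too coarse for this step.
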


A  version of \Cref{Mainresult} containing also the information of $y_0$ is later given in \Cref{coercivity->obstruction}.

\subsection{Ideas of the proof}
 The starting point of the analysis is the power series expansion method. We formally write the control and the solution under the form 
\begin{align*}
     u = \eps u_1 + \eps^2 u_2 + \ldots, \quad y  = \eps y_1 + \eps^2 y_2 + \ldots. 
 \end{align*}
 Then 
\begin{align*}
    y y_x = \eps^2 y_{1} y_{1, x} + \ldots.
\end{align*}
This motivates us to consider the following systems: 
\begin{align}   
    \label{our1storder-***}
    \begin{cases}   
        y_{1, t} (t, x) - y_{1, xx} (t, x) = u_1(t) \quad &\text{ in } (0, +\infty) \times (0, 1), \\[6pt]
        y_{1}(t, 0) = y_{1}(t, 1) = 0 &\text{ in } (0, +\infty),\\[6pt]
        y_1(0, \cdot) = 0 &\text{ in } (0, 1),
    \end{cases}
\end{align}
and
\begin{align}   
    \label{our2ndorder-***}
    \begin{cases}   
        y_{2, t} (t, x) -y_{2, xx} (t, x) + y_1 y_{1, x} (t, x) = u_2(t) \quad &\text{ in } (0, +\infty) \times (0, 1), \\[6pt]
        y_{2}(t, 0) = y_{2}(t, 1) = 0 &\text{ in } (0, +\infty), \\[6pt]
        y_2(0, \cdot) = 0 &\text{ in } (0, 1),
    \end{cases}
\end{align}  
where $u_1, u_2 \in L^1(0, +\infty)$. 

Our analysis is then inspired by the work of Coron, Koenig, and Nguyen \cite{Coron-Koenig-Nguyen}, and Nguyen \cite{Nguyen}. The key observation/analysis is that, if $\mathrm{supp} \; u_1 \subset [0, T]$ and
\begin{align}	\label{y10->0}
y_1(T, \cdot ) = 0 \mbox{ in } (0, 1),
\end{align} 
for some $T>0$, then for some even $k_0$ (for example, one can take $k_0 \in \{2, 10\}$), there exists a positive constant $C_T$ depending only on $T$ such that 
\begin{align}	\label{our-obstruct}
\int_0^1 y_2(T, x) \left(-\sin(k_0 \pi x) \right) dx  \geq  C_T \| u_1 \|_{[H^{\frac{5}{4}}(0, T)]^*}^2. 
\end{align}

\medskip

We next describe briefly how to prove \eqref{our-obstruct}. Let $k \in \mN^*$ be even. By integration by parts, one has  
\begin{align} \label{standard-identity}
\int_0^1 y_2(T, x) \phi_k(T, x)  dx =  \frac{1}{2} \int_0^T \int_0^1 y_1^2(t, x) \phi_{k, x}(t, x)  dx  dt.  
\end{align}
Note that $u_2$ is not involved in \eqref{standard-identity} since $k$ is even. 

In this paper, for an appropriate function $v$ defined on $(0, +\infty) \times (0, 1)$, we extend $v$ by $0$ on $(-\infty, 0) \times (0, 1)$ and still denote this extension by $v$. Set 
\begin{align}   
    \hat{v}(z, x) = \frac{1}{\sqrt{2 \pi}} \int_{\mathbb{R}} e^{-i z t} v(t, x) dt = \frac{1}{\sqrt{2 \pi}} \int_0^{+\infty} e^{-i z t} v(t, x) dt \quad \text{ for } (z, x) \in \mathbb{C} \times (0, 1), 
\end{align}     
which is the Fourier transform of $v$ with respect to the time variable $t$.  A similar definition is also used for a function defined on $(0, +\infty)$.

 The main analysis of \eqref{our-obstruct} is to estimate the right-hand side of \eqref{standard-identity}. As in \cite{Coron-Koenig-Nguyen} and \cite{Nguyen}, we first take the Fourier transform in time of \eqref{our1storder-***} and combine with \eqref{y10->0} to obtain a formula for $\hat y_1$ as a function of $\hu_1$ (see \Cref{lem-y1}). More importantly, as in \cite{Nguyen}, we then use Parseval's identity to obtain the following core formula (see \Cref{lem-key-1}):  
\begin{align} \label{key-1}
 \int_0^T \int_0^1 y_1^2(t, x) \phi_{k, x}(t, x) dx  dt = k \pi \int_{\mathbb{R}} \left| \hu_1\left(z + i \frac{(k \pi)^2}{2} \right) \right|^2 \Phi_k \left(z + i\frac{(k \pi)^2}{2} \right) dz,
\end{align}
where $\Phi_k$ is a \textit{real-valued} function independent of $u_1$ (see \eqref{defPhik} for the definition of $\Phi_k$). The fact that $\Phi_k$ is a real-valued function is crucial in our analysis and is one of the new key ingredients of \cite{Nguyen} in comparison with  \cite{Coron-Koenig-Nguyen}.  We then study the asymptotic behavior and the sign of the function $\Omega_k$ given by
\begin{align} \label{def-Omega}
	\Omega_k(z) : = \Phi_k \left(z + i \frac{(k \pi)^2}{2} \right) \mbox{ for } z \in \mathbb{R}.
\end{align}

The key observation/analysis of the paper is to show that, for $k_0 \in \{2,  10\}$, there exists a positive constant $C$ such that
\begin{align}\label{key-2}
-\Omega_{k_0}(z) \geq C (|z|^2 + 1)^{-\frac{5}{4}} \quad \forall z \in \mR 
\end{align} 
(see \Cref{as-behavior}). As a consequence of  \eqref{key-2}, we derive from  \eqref{key-1} that
\begin{align}	\label{key-1+2}
	\int_0^T \int_0^1 y_1^2(t, x) \phi_{k_0, x}(t, x) dx dt \leq - C_T \|u_1\|^2_{[H^{\frac{5}{4}}(0, T)]^*}
\end{align}
(see \Cref{2obstruction}). We then obtain \eqref{our-obstruct} as a direct consequence of \eqref{def-phik}, \eqref{standard-identity}, and \eqref{key-1+2}. 

The proof of \eqref{key-2} is inspired from \cite[Theorem 1.3]{Nguyen} where the controllability of the KdV equation was considered. In comparison with \cite{Nguyen}, we can establish the sign of $\Omega_2(z)$ and $\Omega_{10}(z)$ for all $z \in \mR$. Its proof is involved and requires detailed analysis of $\Phi_k$. 
 
To complete the analysis of \Cref{Mainresult} from \eqref{our-obstruct}, using the approach proposed in  \cite{Coron-Koenig-Nguyen} and \cite{Nguyen}, we need to establish several {\it new} estimates for solutions of the heat equation (see \Cref{pro-y1} in \Cref{subsec-2ineq}). In particular, concerning the heat system \eqref{linearizedBurgers} with $y_0 = 0$, we show that, for some positive constant $C$ depending only on $T$,   
\begin{align}		\label{intro-y}
	\|y\|_{L^2((0, T), H^1(0, 1))} \leq C \|u\|_{[H^{\frac{3}{4}}(0, T)]^*},
\end{align}    
where $u \in L^1(0, T)$ and $y \in Y_T$ is the corresponding solution of \eqref{linearizedBurgers}. The standard estimate uses $\| u \|_{L^1(0, T)}$ instead of $\|u\|_{[H^{\frac{3}{4}}(0, T)]^*}$ (see, e.g., \Cref{ap-heat-fx}). The proof of  \eqref{intro-y} is via a spectral decomposition and uses a technical lemma (\Cref{lem-spectral}), which is interesting in itself. The proof of this lemma involves a clever application of Fubini's theorem and Parseval's identity.
\subsection{Literature review}

The Burgers equation was introduced by Burgers \cite{Burgers} and shares many features with the Navier-Stokes system. Many works have been devoted to the control aspects of the viscous Burgers equation. Let us first briefly discuss the null controllability aspect of this equation. When a Dirichlet boundary control is applied on one side (with no internal controls), Fursikov and Imanuvilov \cite{Fursikov-Imanuvilov} showed that the system is locally null controllable in small time and globally null controllable in a finite time that depends on the size of the initial datum. Their proof relies on Carleman estimates for the parabolic problem, viewing the nonlinear term $y y_x$ as a small forcing term. Fernandez-Cara and Guerrero \cite{CG-07} later demonstrated that such a system is not globally null controllable in small time. The same phenomenon holds even when Dirichlet boundary controls are used on both sides, as shown by Guerrero and Imanuvilov \cite{Guerrero-Imanuvilov}. Coron \cite{Coron-07} established that with a Dirichlet boundary control on one side, the finite time required for global null controllability of the Burgers system does not depend on the initial data. His analysis is based on the maximum principle and the local result. The situation changes when an internal control, which depends only on time, is added to the Dirichlet boundary controls on both sides. Chapouly \cite{Chapouly}, using the return method introduced by Coron \cite{Coron-93, Coron-96} (see also \cite[Chapter 6]{Coron-07-book}), proved that in this case, the Burgers equation is globally null controllable in small time. Marbach \cite{Marbach14} improved this result by showing that the same conclusion holds when using only a Dirichlet boundary control on the left and an internal control depending only on time. Robin \cite{Robin} has obtained similar results to those in \cite{Marbach14} for the generalized Burgers equation. Other results on the controllability of the viscous Burgers equation using boundary or internal controls can be found in \cite{FI-95, Diaz-96, CG-05, GS-07}. 

The method used in this paper is inspired by the work of Coron, Koenig, and Nguyen \cite{Coron-Koenig-Nguyen}, and Nguyen \cite{Nguyen} on the controllability of the KdV system, with the starting point being the power series expansion method. This method was introduced by Coron and Cr\'epeau \cite{Coron-Crepeau} to study the controllability of the KdV system using Neumann boundary controls on the right for critical lengths whose unreachable space of the linearized system is of dimension 1. Later,  Cerpa \cite{Cerpa07}, and Cr\'epeau and Cerpa \cite{CC09} applied the power series expansion method to prove finite-time controllability of such a system at all critical lengths. Coron, Koenig, and Nguyen \cite{Coron-Koenig-Nguyen} showed that the local null controllability in small time does not hold for a class of critical lengths. The critical lengths in this case were discovered by Rosier \cite{Rosier97}. Nguyen \cite{Nguyen} implemented the power series expansion method to study the controllability of KdV equations using Dirichlet boundary controls on the right for all critical lengths. The set of critical lengths in this case was observed by Glass and Guerrero \cite{GG10}. The power series expansion method was also used to study the controllability of the water tank problem \cite{Coron-Koenig-Nguyen-WT}, as well as of the Schr\"odinger equation using bilinear controls \cite{BC06, Beauchard08, BM14}. It was also applied to study the decay of solutions of KdV systems at critical lengths for which the energy of the solutions of the linearized system is conserved \cite{Ng-decay}. 

\subsection{Organization of the paper} The paper is organized as follows. In Section \ref{sect-key-1}, we establish a formula for $\hat y_1$ in \Cref{lem-y1} and derive \eqref{key-1} in \Cref{lem-key-1}. In Section \ref{sect-key-2}, we study $\Omega_k$ in \Cref{as-behavior} which particularly gives \eqref{key-2} for $k_0 \in \{2, 10\}$. In Section \ref{sect-obstruction}, we prove \eqref{key-1+2} in  \Cref{2obstruction}. In Section \ref{subsec-2ineq}, we establish \eqref{intro-y} in \Cref{pro-y1}. In Section \ref{provemainsresult}, we prove \Cref{coercivity->obstruction}, which implies \Cref{Mainresult}. Several standard and technical results are given in the appendices.

\section{On the linearized system and proof of (\ref{key-1})} \label{sect-key-1}

We begin this section with a formula for $\hy_1$ where $y_1$ is a solution of \eqref{our1storder-***} that satisfies \eqref{y10->0}.

\begin{lemma}   \label{lem-y1}
Let $u_1 \in L^1(0, +\infty)$. Let $y_1 \in C([0, +\infty), L^2(0, 1)) \cap L^2_{loc}((0, +\infty), H^1(0, 1))$ be the unique weak solution of \eqref{our1storder-***}. Suppose that $\mathrm{supp} \; u_1 \subset [0, T]$ and $y_1(T, \cdot) = 0$ for some $T>0$. For $z \in \mC \setminus \{0\}$ and for $x \in (0, 1)$, it holds  
 \begin{align}	\label{Fourier-ODE}
    		\hy_1(z, x) = \frac{i \hu_1(z)}{z} \left(\frac{e^{\lambda_2(z)} - 1}{e^{\lambda_2(z)} - e^{\lambda_1(z)}} e^{\lambda_1(z) x} + \frac{1 - e^{\lambda_1(z)}}{e^{\lambda_2(z)} - e^{\lambda_1(z)}} e^{\lambda_2(z) x} - 1 \right).
    \end{align}
\end{lemma}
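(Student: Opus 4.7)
The plan is to reduce \eqref{our1storder-***} to a Dirichlet boundary-value ODE in $x$ by Fourier-transforming in time, and then to solve that ODE explicitly.

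First I would observe that $y_1$ is in fact compactly supported in $t$. Since $\operatorname{supp} u_1 \subset [0, T]$, on $[T, +\infty)$ the function $y_1$ solves the free Dirichlet heat equation with initial datum $y_1(T, \cdot) = 0$, and uniqueness forces $y_1 \equiv 0$ on $[T, +\infty) \times (0, 1)$. Consequently $\hy_1(\cdot, x)$ is well-defined (in fact entire in $z$ by Paley--Wiener), and the formal manipulations below --- integration by parts in $t$ and exchange of $\partial_x$ with the time-integral --- are legitimate.

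Next, I would apply the Fourier transform in $t$ to the first line of \eqref{our1storder-***}; using the compact support to discard boundary terms, this produces the $x$-ODE with parameter $z$,
\begin{equation*}
-\partial_{xx}\hy_1(z, x) + iz\, \hy_1(z, x) = \hu_1(z), \qquad \hy_1(z, 0) = \hy_1(z, 1) = 0.
\end{equation*}
For fixed $z \neq 0$ (and away from the discrete exceptional set where $e^{\lambda_1(z)} = e^{\lambda_2(z)}$) this is an elementary constant-coefficient ODE, with characteristic roots $\lambda_{1,2}(z)$ satisfying $\lambda_j(z)^2 = iz$ and with constant particular solution $-i\hu_1(z)/z$. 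Writing the general solution as
\begin{equation*}
\hy_1(z, x) = A(z) e^{\lambda_1(z) x} + B(z) e^{\lambda_2(z) x} - \frac{i\hu_1(z)}{z},
\end{equation*}
and imposing the two Dirichlet conditions gives the $2 \times 2$ system $A + B = i\hu_1(z)/z$ and $A e^{\lambda_1(z)} + B e^{\lambda_2(z)} = i\hu_1(z)/z$. Solving by elementary elimination yields exactly the coefficients of $e^{\lambda_1(z) x}$ and $e^{\lambda_2(z) x}$ displayed in \eqref{Fourier-ODE}.

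The steps are essentially routine and I do not anticipate a real obstacle. The two points that will demand a little care are: (i) the justification that $\widehat{\partial_t y_1}(z, x) = iz\, \hy_1(z, x)$ carries no boundary contribution, which follows from the compact support in $t$ together with $y_1 \in C([0, +\infty); L^2(0, 1))$ vanishing at both endpoints of that support; and (ii) the right-hand side of \eqref{Fourier-ODE} has apparent singularities on the discrete set $\{i \pi^2 n^2 : n \in \mN^*\}$, but both sides are meromorphic in $z$ and these singularities are removable --- the hypothesis $y_1(T, \cdot) = 0$ being precisely the condition that forces the required cancellation at the odd values of $n$ (while at even $n$ the numerators and denominator share a simple zero) --- so the identity extends to all of $\mC \setminus \{0\}$.
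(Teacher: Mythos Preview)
Your proposal is correct and follows essentially the same route as the paper: take the Fourier transform in time to obtain the Dirichlet ODE $(iz)\hy_1 - (\hy_1)_{xx} = \hu_1$, write down the explicit solution, and then pass from the generic $z$ to all of $\mC\setminus\{0\}$ by analyticity. The only cosmetic difference is in this last step: the paper shows that $\hy_1$ and the explicit candidate $w$ coincide on the resolvent set and are both analytic in $z\in\mC\setminus\{0\}$ (citing \cite[Lemma~A.1]{Coron-Koenig-Nguyen} for the analyticity of $w$), whereas you argue directly that the apparent poles at $z=i\pi^2 n^2$ are removable---your observation that $y_1(T,\cdot)=0$ forces $\hu_1(i\pi^2 n^2)=0$ for odd $n$ is precisely what is needed.
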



Here and in what follows, for $z \in \mathbb{C}$, we denote by $\lambda_1(z), \lambda_2(z)$ the two square roots of $iz$ with the convention that $\mathrm{arg}(\lambda_1(z)) \in (-\pi/2, \pi/2]$ for $z \in \mC \setminus \{0\}$. 

\begin{proof}  We derive from \eqref{our1storder-***} that for every $z \in \mC$,
\begin{align}\label{lem-y1-p1}
    \begin{cases}    	
    		\hy_1(z, \cdot) \in H^2(0, 1) \cap H^1_0(0, 1), \\[6pt]      
        (iz) \hy_1(z, \cdot) - (\hy_1)_{xx} (z, \cdot) = \hu_1 (z).
    \end{cases}
    \end{align}

\medskip

We search for a function $w: \mC \setminus \{0\} \times [0, 1] \to \mC$ such that for every $z \in \mC \setminus \{0\}$, 
\begin{align}	\label{lem-y1-p1'}
	\begin{cases} 
		w(z, \cdot) \in H^2(0, 1) \cap H^1_0(0, 1), \\[6pt]
		(iz) w(z, \cdot) - w_{xx}(z, \cdot) = \hu_1(z). 
	\end{cases}
\end{align}
Let $z \in \mC \setminus \{0\}$. Suppose that $w(z, \cdot)$ has the form 
\begin{align*}
w(z, x) = a (z) e^{\lambda_1(z) x} + b (z) e^{\lambda_2(z) x} + \frac{\hu_1(z)}{iz} \quad \text{ for } x \in [0, 1].
\end{align*}
Since $w(z, 0) = w(z, 1) = 0$, we have 
$$
a(z) + b(z) =  a(z)e^{\lambda_1(z)} + b(z)e^{\lambda_2(z)} = \frac{i\hu_1(z)}{z},  
$$
which yields 
$$
a(z) = \frac{i \hu_1(z)}{z} \frac{e^{\lambda_2(z)} - 1}{e^{\lambda_2(z)} - e^{\lambda_1(z)}} \quad \mbox{ and } \quad b(z) = \frac{i \hu_1(z)}{z} \frac{1- e^{\lambda_1(z)}}{e^{\lambda_2(z)} - e^{\lambda_1(z)}}. 
$$
We deduce that, for $x \in [0, 1]$, 
 \begin{align}	\label{lem-y1-p2}
    		w(z, x) = \frac{i \hu_1(z)}{z} \left(\frac{e^{\lambda_2(z)} - 1}{e^{\lambda_2(z)} - e^{\lambda_1(z)}} e^{\lambda_1(z) x} + \frac{1 - e^{\lambda_1(z)}}{e^{\lambda_2(z)} - e^{\lambda_1(z)}} e^{\lambda_2(z) x} - 1 \right).
    \end{align}
It is clear that for every $z \in \mC \setminus \{0\}$, $w(z,\cdot)$ given in \eqref{lem-y1-p2} satisfies \eqref{lem-y1-p1'}.

\medskip

It remains to show that for every $z \in \mC \setminus \{0\}$,
\begin{align}	\label{hy1=w}
	\hy_1(z, x) = w(z, x) \quad \forall x \in [0, 1],
\end{align}
where $w$ is given in \eqref{lem-y1-p2}. Indeed, it follows from \eqref{lem-y1-p1} and \eqref{lem-y1-p1'} that \eqref{hy1=w} holds for every $z$ outside a discrete subset of $\mC$ (more precisely, \eqref{hy1=w} holds for every $z \in \mC \setminus \{0\}$ such that $-iz$ belongs to the resolvent set of the operator $\mathcal{A}: H^2(0, 1) \cap H^1_0(0, 1) \to L^2(0, 1), \mathcal{A}f := - f''$).  In addition, using the argument in the proof of \cite[Lemma A.1]{Coron-Koenig-Nguyen}, we have that for each $x \in [0, 1]$, $w(\cdot, x)$ is an analytic function with respect to $z$ in $\mC \setminus \{0\}$. Note also that for $x \in [0, 1]$, $\hat y_1(\cdot, x)$ is an entire function with respect to $z$. It follows that \eqref{hy1=w} holds for every $z \in \mC \setminus \{0\}$. The proof is complete. 
\end{proof}

For $z \in \mC \setminus \{0\}$ and $k \ge 1$, set 
\begin{align} \label{def-J}
\Psi_k(z) = \int_0^1 \left| \left(e^{\lambda_2(z)} - 1 \right) e^{\lambda_1(z) x} + \left(1 - e^{\lambda_1(z)} \right) e^{\lambda_2(z) x} + e^{\lambda_1(z)} - e^{\lambda_2(z)} \right|^2 \cos(k \pi x) dx,
\end{align}
and
\begin{align}
\label{defPhik}
        \Phi_k(z)
        = \frac{1}{
        |z|^2 |e^{\lambda_1(z)} - e^{\lambda_2(z)}|^2} \Psi_k(z). 
\end{align}

We have the following result which yields \eqref{key-1}.  

\begin{lemma}   \label{lem-key-1}
Let $u_1 \in L^1((0, +\infty), \mathbb{R})$ and let $y_1 \in C([0, +\infty), L^2(0, 1)) \cap L^2_{loc}((0, +\infty), H^1(0, 1))$ be the unique solution of \eqref{our1storder-***}. Suppose that $\mathrm{supp} \; u_1 \subset [0, T]$ and $y_1(T, \cdot) = 0$ for some $T>0$. Then, for every $k \in \mN^*$,    
\begin{align}   \label{projy2}  
       \int_0^T \int_0^1 y_1^2(t, x) \phi_{k, x}(t, x) dx  dt = k \pi \int_{\mathbb{R}} \left| \hu_1\left(z + i \frac{(k \pi)^2}{2} \right) \right|^2 \Phi_k \left(z + i\frac{(k \pi)^2}{2} \right) dz,
  \end{align}
  where $\phi_k$ and $\Phi_k$ are given in \eqref{def-phik} and \eqref{defPhik}, respectively. 
\end{lemma}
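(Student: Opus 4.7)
The plan is to reduce the left-hand side of \eqref{projy2} to a Parseval identity in time and then insert the explicit formula for $\hy_1$ from Lemma \ref{lem-y1}. Since $\phi_{k,x}(t,x) = k\pi e^{\pi^2 k^2 t}\cos(k\pi x)$, I would set $\alpha_k = (k\pi)^2/2$ and $v(t,x) := e^{\alpha_k t} y_1(t,x)$, so that $y_1^2(t,x) e^{\pi^2 k^2 t} = v(t,x)^2$. From $\mathrm{supp}\, u_1 \subset [0,T]$ together with $y_1(T,\cdot) = 0$, uniqueness for the linear heat system on $(T, +\infty) \times (0,1)$ with zero source and zero initial trace forces $y_1(t,\cdot) \equiv 0$ for $t \ge T$, so $v$ is supported in $[0,T]\times[0,1]$ and in particular $v(\cdot, x) \in L^2(\mR)$ for a.e. $x \in (0,1)$.

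Next, I would apply Plancherel's theorem in $t$ for each fixed $x$ and invoke Fubini to write
\begin{align*}
\int_0^T \int_0^1 y_1^2(t,x) \phi_{k,x}(t,x) \, dx\, dt = k\pi \int_{\mR} \int_0^1 |\hat v(z,x)|^2 \cos(k\pi x) \, dx \, dz.
\end{align*}
A direct computation from the definition of the Fourier transform, exploiting the compact support of $v$ in $t$, yields $\hat v(z,x) = \hy_1(z + i\alpha_k, x)$ for $z \in \mR$ and $x \in [0,1]$, because $e^{-izt}e^{\alpha_k t} = e^{-i(z+i\alpha_k)t}$. Applying Lemma \ref{lem-y1} at the shifted point $z + i\alpha_k$ then factors $\hy_1(z+i\alpha_k, x)$ as $i\hu_1(z+i\alpha_k)/\bigl((z + i\alpha_k)(e^{\lambda_2} - e^{\lambda_1})\bigr)$ multiplied by the bracket appearing inside \eqref{def-J}, where $\lambda_j = \lambda_j(z+i\alpha_k)$.

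Finally, taking the squared modulus, pulling out the $x$-independent prefactor, and integrating against $\cos(k\pi x)$ gives, by the definitions \eqref{def-J} and \eqref{defPhik},
\begin{align*}
\int_0^1 |\hy_1(z+i\alpha_k, x)|^2 \cos(k\pi x) \, dx = |\hu_1(z+i\alpha_k)|^2 \, \Phi_k(z + i\alpha_k),
\end{align*}
and reinserting this identity in the previous display yields \eqref{projy2}. The only non-algebraic step is the pair ``Plancherel plus complex shift'': it is the unique place where both the support condition on $u_1$ and the terminal condition $y_1(T,\cdot)=0$ enter in an essential way, and I expect it to be the main point requiring care, though the compact support of $v$ makes the complex translation $\hat v(z,x) = \hy_1(z+i\alpha_k,x)$ entirely elementary.
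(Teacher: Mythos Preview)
Your proof is correct and follows essentially the same approach as the paper: introduce the weighted function $v(t,x) = e^{(k\pi)^2 t/2} y_1(t,x)$, apply Parseval in $t$, recognize $\hat v(z,x) = \hy_1(z + i(k\pi)^2/2, x)$ via the support condition, and then invoke Lemma~\ref{lem-y1} together with the definitions \eqref{def-J}--\eqref{defPhik}. The paper's argument is identical in structure, differing only in presentation.
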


\begin{proof} Extend $y_1$ by 0 for $(t, x) \in (-\infty, 0) \times (0, 1)$ and still denote this extension by $y_1$. Since $y_1(t, x) = 0$ for $t > T$ and for $t < 0$,  we have
\begin{align}	\label{lem-key-1-p1}
	\int_0^T \int_0^1 y^2_1(t, x) \phi_{k, x}(t, x) dx dt = k \pi \int_{\mR} \int_0^1 y_1^2(t, x) e^{\pi^2 k^2 t} \cos(k \pi x) dx dt.
\end{align}
By Parseval's identity and the fact that $y_1$ is real-valued (since $u_1$ is assumed to be real-valued as well), 
    \begin{align*}
    \int_{\mathbb{R}} \int_0^1 y_1^2(t, x) e^{\pi^2 k^2 t} \cos (k \pi x) dx dt 
	&=  \int_0^1  \cos (k\pi x) \int_{\mathbb{R}} \left|y_1(t, x) e^{\frac{\pi^2 k^2 t}{2}} \right|^2 dt   dx \\
    &=  \int_0^1  \cos(k \pi x) \int_{\mathbb{R}} \left|\widehat{y_1(\cdot, x) e^{\frac{\pi^2 k^2 \cdot}{2}}} (z) \right|^2 dz   dx, 
    \end{align*}
which implies
    \begin{align}	\label{lem-key-1-p2}
       \int_{\mathbb{R}} \int_0^1 y_1^2(t, x) e^{\pi^2 k^2 t} \cos (k \pi x) dx dt = \int_{\mathbb{R}} \int_0^1 \left|\hy_1 \left(z + i \frac{(k \pi)^2}{2}, x \right) \right|^2 \cos (k \pi x) dx  dz.
    \end{align}
The conclusion now follows from \eqref{lem-key-1-p1}, \eqref{lem-key-1-p2}, and \Cref{lem-y1}. 
\end{proof}

\section{Properties of the multiplier \texorpdfstring{$\Omega_k$}{Omega k}} \label{sect-key-2}

In this section, for $k \in \mN^*$, we study the function $\Omega_k: \mR \to \mR$ given by
$$
 \Omega_k (z)  = \Phi_k \left(z + i\frac{(k \pi)^2}{2} \right) \quad \text{ for } z \in \mR, 
$$
where $\Phi_k$ is defined in \eqref{defPhik}.  The goal then is to derive properties related to the sign of the left-hand side of \eqref{projy2}. The case $k_0 \in \{2, 10\}$ is of special interest. For notational convenience, set, for $k \in \mN^*$ and $z \in \mathbb{R}$,
\begin{align}\label{def-Lambda}	
	\Lambda_{1,k}(z) = \lambda_{1}\left(z + i \frac{(k \pi)^2}{2} \right), \quad \Lambda_{2,k}(z) =  \lambda_2\left(z + i \frac{(k \pi)^2}{2} \right), 
\end{align}
and 
\begin{align}\label{def-I}
\Theta_k(z) = \Psi_k\left(z + i \frac{(k \pi)^2}{2} \right),
\end{align}
where $\Psi_k$ is defined by \eqref{def-J}.  Recall that $\lambda_1(z), \lambda_2(z)$ are the two square roots of $iz$ with the convention that $\mathrm{arg}(\lambda_1(z)) \in (-\pi/2, \pi/2]$ for $z \in \mC \setminus \{0\}$. 

 It follows from \eqref{defPhik} that, for $z \in \mR$,
 \begin{align}	\label{Omegakformula}
        \Omega_k(z) = \frac{1}{\left(z^2 + \frac{(k \pi)^4}{4} \right) \left|e^{\Lambda_{1,k}(z)} - e^{\Lambda_{2,k}(z)} \right|^2} \Theta_k(z). 
 \end{align}
Note that, by \eqref{def-J}, for $z \in \mR$, 
 \begin{multline}	\label{Thetakformula}
\Theta_k(z) \\[6pt]
= \int_0^1 \left| \left(e^{\Lambda_{2,k}(z)} - 1 \right) e^{\Lambda_{1,k}(z) x} + \left(1 - e^{\Lambda_{1,k}(z)} \right) e^{\Lambda_{2,k}(z) x} + e^{\Lambda_{1,k}(z)} - e^{\Lambda_{2,k}(z)} \right|^2 \cos(k \pi x) dx. 
 \end{multline}

Here is the main result of this section, which implies \eqref{key-2}.

\begin{proposition} \label{as-behavior}  \label{sign}  \label{even+continuous} 
Let $k \in \mN^*$  be {\bf even}. The following properties hold. 
\begin{enumerate}
	\item[1)] \label{Oeven} The function $\Omega_k$ is continuous and even, i.e.,  $\Omega_k(z) = \Omega_k(-z)$ for  $z \in \mR$.
	 
    \item[2)] \label{Oasym} We have
    \begin{align}  
       \Omega_k(z) = -\sqrt{2} |z|^{-\frac{5}{2}} + O \left(|z|^{-\frac{7}{2}} \right) \text{ as }  |z| \to +\infty.
    \end{align}
    	\item[3)] We have
    	\begin{align}
    		\Omega_k(z) < 0 \quad \mbox{ for } z \in \mR \mbox{ with }  |z| > \frac{3(k \pi)^2}{2 \sqrt{7}}.
    	\end{align}
    \item[4)] \label{Onegative} If $k_0 \in \{2, 10\}$, then
    \begin{align}
        \Omega_{k_0}(z) < 0 \quad \forall z \in \mathbb{R}.
    \end{align}
\end{enumerate}
\end{proposition}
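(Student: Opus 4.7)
The heart of the proof is to derive a clean closed-form expression for $\Omega_k$. Since the two square roots of $iw$ satisfy $\lambda_2 = -\lambda_1$, one has $\Lambda_{2,k}(z) = -\Lambda_{1,k}(z) =: -\Lambda$. Writing $\Lambda = \alpha + i\beta$ with $\alpha \geq 0$, the relation $\Lambda^2 = iz - (k\pi)^2/2$ yields $\alpha^2 - \beta^2 = -(k\pi)^2/2$ and $2\alpha\beta = z$, whence the useful identities $z^2 + (k\pi)^4/4 = (\alpha^2 + \beta^2)^2$ and $4\alpha^2 + k^2\pi^2 = 4\beta^2 - k^2\pi^2 = 2(\alpha^2+\beta^2)$. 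Applying the identity $\cosh A - \cosh B = 2\sinh\tfrac{A+B}{2}\sinh\tfrac{A-B}{2}$ to the integrand of $\Theta_k$ produces a factor $|\sinh(\Lambda/2)|^2$ that cancels against an identical factor in $|e^{\Lambda} - e^{-\Lambda}|^2 = 16|\sinh(\Lambda/2)|^2|\cosh(\Lambda/2)|^2$. After substituting $y = x - 1/2$, using $\cos(k\pi(y+1/2)) = (-1)^{k/2}\cos(k\pi y)$ (valid for $k$ even), and evaluating the resulting elementary integrals via Fourier decomposition, one arrives at the closed form
\begin{align}\label{plan-Omegak}
\Omega_k(z) = \frac{2\bigl[\alpha\sinh\alpha\,(3\alpha^2 - 5\beta^2) + \beta\sin\beta\,(3\beta^2 - 5\alpha^2)\bigr]}{(\alpha^2+\beta^2)^4\,(\cosh\alpha + \cos\beta)}.
\end{align}

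Assertions (1) and (2) then follow directly. The denominator of \eqref{plan-Omegak} is strictly positive on $\mathbb{R}$: otherwise $\cosh\alpha + \cos\beta = 0$ would force $\alpha = 0$ and $\cos\beta = -1$, whence $z = 0$ and $\beta = k\pi/\sqrt{2}$ would be an odd multiple of $\pi$ --- impossible for any integer $k$, since $k^2/2$ is never a perfect square. The right-hand side of \eqref{plan-Omegak} is manifestly even in $\beta$, so continuity and evenness follow from the fact that $z \mapsto -z$ fixes $\alpha$ and flips $\beta$. For (2), as $|z| \to \infty$ the term $\beta\sin\beta(3\beta^2-5\alpha^2)$ is exponentially negligible compared to $\alpha\sinh\alpha(3\alpha^2-5\beta^2)$; expanding $\alpha^2 = |z|/2 - (k\pi)^2/4 + O(|z|^{-1})$ in the resulting leading asymptotic $-1/(4\alpha^5)$ then produces $\Omega_k(z) = -\sqrt{2}|z|^{-5/2} + O(|z|^{-7/2})$.

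For (3), note that $3\alpha^2 - 5\beta^2 = -2\alpha^2 - 5(k\pi)^2/2$ is always negative, so the first term in the numerator of \eqref{plan-Omegak} is non-positive, and strict negativity of $\Omega_k$ reduces to proving
\[\alpha\sinh\alpha\,(5\beta^2 - 3\alpha^2) > |\beta\sin\beta|\,|3\beta^2 - 5\alpha^2|.\]
Combining the bounds $\sinh\alpha \geq \alpha$ and $|\sin\beta| \leq 1$ with the constraint $\beta^2 = \alpha^2 + (k\pi)^2/2$, a direct algebraic computation establishes the sufficient threshold $|z| > 3(k\pi)^2/(2\sqrt{7})$, equivalently $\alpha^2\beta^2 > 9(k\pi)^4/112$.

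By (3), proving (4) reduces to verifying $\Omega_{k_0}(z) < 0$ on the compact interval $|z| \leq 3(k_0\pi)^2/(2\sqrt{7})$; via \eqref{plan-Omegak} this becomes an explicit finite-range sign inequality in $(\alpha, \beta)$. For $k_0 \in \{2, 10\}$ it is established by a quantitative case analysis --- for example, splitting according to the sign of $3\beta^2 - 5\alpha^2$ and invoking sharper pointwise $\sinh$/$\sin$ estimates in each subregion. This last step is the principal obstacle: unlike the asymptotic argument of (3), negativity on the bounded range depends on specific arithmetic features of $k_0$ --- indeed, a direct evaluation shows $\Omega_4(0) > 0$, confirming that the sign property fails for general even $k$, and explaining why the proposition singles out precisely the values $\{2, 10\}$.
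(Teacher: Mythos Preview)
Your closed form \eqref{plan-Omegak} is correct and is a genuinely cleaner repackaging than the paper's route. The paper instead leaves things in the form $\Theta_k(z)=-\tfrac{2}{z^2+(k\pi)^4/4}\bigl(\mathcal A(z)+\mathcal B(z)\bigr)$ with $\mathcal A,\mathcal B$ expressed through auxiliary quantities $P_k,Q_k$; if one substitutes $P_k=4\sinh\alpha(\cosh\alpha-\cos\beta)$ and $Q_k=-4\sin\beta(\cosh\alpha-\cos\beta)$ and cancels the common factor $\cosh\alpha-\cos\beta$ against $|e^{\Lambda}-e^{-\Lambda}|^2=4(\cosh\alpha-\cos\beta)(\cosh\alpha+\cos\beta)$, one recovers exactly your formula. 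Your treatment of (1) and (2) then follows more directly than the paper's.

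For (3) the two arguments diverge slightly. The paper factors the target inequality as a product of two simpler ones: $(5\beta^2-3\alpha^2)\alpha>|3\beta^2-5\alpha^2|\,\beta$, which is \emph{equivalent} to $|z|>3(k\pi)^2/(2\sqrt7)$, and $\sinh\alpha>|\sin\beta|$, which it checks by showing $\alpha\ge 2\pi\sqrt{1/\sqrt7-1/4}>\ln(2+\sqrt{11})$ on that range. Your route (bound $\sinh\alpha\ge\alpha$, $|\sin\beta|\le 1$) is also valid, but the threshold it delivers is not obviously the stated one; what actually happens is that beyond the paper's threshold one has both $\alpha>1$ and the first inequality above, and multiplying these gives your sufficient condition $\alpha^2(5\beta^2-3\alpha^2)>\beta|3\beta^2-5\alpha^2|$. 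Your ``direct algebraic computation'' should be made explicit along these lines.

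For (4) there is a real gap. Your suggested plan --- split on the sign of $3\beta^2-5\alpha^2$ and use sharper $\sinh/\sin$ bounds --- misses the mechanism. On the residual interval $|z|\le 3(k_0\pi)^2/(2\sqrt7)$ one has $3\beta^2-5\alpha^2=3c-2\alpha^2>0$ throughout (here $c=(k_0\pi)^2/2$), so no sign split is needed, and ``sharper estimates'' are not what resolves it. The decisive observation is that on this interval
\[
\beta\in\Bigl[\tfrac{k_0\pi}{\sqrt2},\,k_0\pi\sqrt{\tfrac14+\tfrac{1}{\sqrt7}}\Bigr],
\]
and for $k_0=2$ this interval lies in $(\pi,2\pi)$ while for $k_0=10$ it lies in $(7\pi,8\pi)$; hence $\sin\beta<0$ in both cases. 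Then in your numerator the first term $\alpha\sinh\alpha(3\alpha^2-5\beta^2)$ is $\le 0$ (strictly for $z\ne 0$) and the second term $\beta\sin\beta(3\beta^2-5\alpha^2)$ is $<0$, so the numerator is strictly negative. This is exactly the paper's argument, transcribed into your variables; your diagnosis that the obstruction is arithmetic (and your check that $\Omega_4(0)>0$ because $4\pi/\sqrt2\in(2\pi,3\pi)$ gives $\sin\beta>0$) is correct, but the proof itself needs this explicit $\sin\beta<0$ step rather than a generic case analysis.
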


Here and in what follows, for two functions $f: I \to \mathbb{C}$ and $g: I \to [0, + \infty)$ ($I = [0,+\infty)$ or $I = \mR$), the notation $f(z) = O(g(z))$ means that there exist positive constants $C$ and $M$, which only depend on $k$, such that $|f(z)| \leq C g(z)$ for all  $z \in I$ satisfying $|z| \geq M$. 

\medskip 
The proof of \Cref{as-behavior} essentially uses the following property of $\Theta_k$.

\begin{proposition} \label{prop-Theta} Set, for $k \in \mN^*$ and $z \geq 0$,
\begin{align} 	\label{def-Pk}
P_k (z) = \left|e^{\Lambda_{2,k}(z)} - 1 \right|^2 \left(e^{2\Re (\Lambda_{1,k}(z))} - 1 \right),   
\end{align}
and 
\begin{align}	\label{def-Qk}
Q_k (z) = \Im \left\{ \left(e^{\Lambda_{2,k}(z)} - 1 \right) \left( e^{\overline{\Lambda_{1,k}(z)}} - 1 \right) \left(e^{2 i \Im (\Lambda_{1,k}(z))} - 1 \right) \right\}. 
\end{align}
Then, for {\bf even} $k \in \mN^*$ and $z \geq 0$,
\begin{align}
\Theta_k(z) &=  - \frac{2}{z^2 + \frac{(k \pi)^4}{4}} \left(\mathcal{A}(z) + \mathcal{B}(z) \right),
\end{align}
where
\begin{align}	\label{A(z)}
    \mathcal{A}(z) = \left(2 (k \pi)^2  + \sqrt{z^2 + \frac{(k \pi)^4}{4}}  \right)  P_k (z) \;\Re (\Lambda_{1,k}(z)), 
\end{align}
and
\begin{align}	\label{B(z)}
    \mathcal{B}(z) &= \left(2 (k \pi)^2  - \sqrt{z^2 + \frac{(k \pi)^4 }{4}} \right)  Q_k(z)  \; \Im (\Lambda_{1,k}(z)). 
\end{align} 
\end{proposition}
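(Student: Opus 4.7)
The plan is a direct computation exploiting two algebraic features: first, $\Lambda_{2,k}(z) = -\Lambda_{1,k}(z)$, since both are square roots of the same number; second, for even $k$ one has the clean formula
\[
\int_0^1 e^{cx}\cos(k\pi x)\,dx \;=\; \frac{c(e^c - 1)}{c^2 + (k\pi)^2}
\]
(for $c \in \mC$ with $c^2 + (k\pi)^2 \neq 0$, extended by continuity to $0$ at $c = 0$). The starting point is to write $\Lambda := \Lambda_{1,k}(z) = p + iq$, where $p, q \geq 0$ when $z \geq 0$ thanks to the branch convention on $\arg \lambda_1$. The relation $\Lambda^2 = iz - (k\pi)^2/2$ yields
\[
p^2 - q^2 = -\tfrac{(k\pi)^2}{2}, \quad 2pq = z, \quad p^2 + q^2 = \sqrt{z^2 + \tfrac{(k\pi)^4}{4}},
\]
and, as a direct consequence, the derived identities
\[
2(k\pi)^2 + \sqrt{z^2 + \tfrac{(k\pi)^4}{4}} = 5q^2 - 3p^2, \qquad 2(k\pi)^2 - \sqrt{z^2 + \tfrac{(k\pi)^4}{4}} = 3q^2 - 5p^2,
\]
which are exactly the factors appearing in $\mathcal{A}$ and $\mathcal{B}$.

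Next I expand the integrand of $\Theta_k$ as $|f(x)|^2$ with $f(x) = \alpha e^{\Lambda x} + \beta e^{-\Lambda x} + \gamma$, where $\alpha = e^{-\Lambda}-1$, $\beta = 1 - e^{\Lambda} = \alpha e^{\Lambda}$, and $\gamma = e^\Lambda - e^{-\Lambda}$; the $|\gamma|^2$ piece integrates to zero against $\cos(k\pi x)$. Denoting $J_k(c) = \int_0^1 e^{cx}\cos(k\pi x)\,dx$, the remaining eight terms group into three pairs. The diagonal pair $|\alpha|^2 J_k(2p) + |\beta|^2 J_k(-2p)$, using $|\beta|^2 = |\alpha|^2 e^{2p}$ and $4p^2 + (k\pi)^2 = 2\sqrt{z^2 + (k\pi)^4/4}$, collapses to $2pP_k/\sqrt{z^2 + (k\pi)^4/4}$. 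The cross pair $\alpha\bar\beta J_k(2iq) + \bar\alpha\beta J_k(-2iq)$, with denominator $(k\pi)^2 - 4q^2 = -2\sqrt{z^2 + (k\pi)^4/4}$, simplifies via $e^{\bar\Lambda}(e^{2iq}-1) = 2ie^p \sin q$ and $|\alpha|^2 e^p = 2(\cosh p - \cos q)$, yielding $|\alpha|^2 e^p \sin q = -Q_k/2$ and hence a contribution $-2qQ_k/\sqrt{z^2 + (k\pi)^4/4}$. The two remaining mixed pairs, involving $J_k(\pm\Lambda)$ and $J_k(\pm\bar\Lambda)$, share denominators $\pm iz + (k\pi)^2/2$; using the identity $\alpha(e^\Lambda - 1) - \beta(e^{-\Lambda}-1) = -2\alpha\beta$ (a consequence of $\beta = \alpha e^\Lambda$), they compress into $[-2(k\pi)^2\,\Re(W\Lambda) - 4z\,\Im(W\Lambda)]/(z^2 + (k\pi)^4/4)$ with $W = \alpha\beta\bar\gamma$.

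The decisive step is a hyperbolic computation showing $\Re W = P_k$ and $\Im W = -Q_k$: this follows by expanding $\alpha\beta = 2(\cosh\Lambda - 1)$ and $\bar\gamma = 2\sinh\bar\Lambda$ in terms of $p, q$ and applying $\cosh^2 p - \sinh^2 p = 1$. Substituting and using $z = 2pq$, the mixed contribution becomes $[-2pP_k(6q^2 - 2p^2) - 2qQ_k(2q^2 - 6p^2)]/(z^2 + (k\pi)^4/4)$. Putting all three pieces over the common denominator $(p^2 + q^2)^2 = z^2 + (k\pi)^4/4$, the coefficient of $pP_k$ becomes $2(p^2 + q^2) - 2(6q^2 - 2p^2) = -2(5q^2 - 3p^2)$ and the coefficient of $qQ_k$ becomes $-2(p^2 + q^2) - 2(2q^2 - 6p^2) = -2(3q^2 - 5p^2)$, matching exactly $-2\mathcal{A}$ and $-2\mathcal{B}$ after invoking the derived identities of the first paragraph.

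The main obstacle is the somewhat opaque identification $\Re W = P_k$, $\Im W = -Q_k$: the specific combinations appearing in the definitions of $P_k$ and $Q_k$ are not transparent a priori and only emerge after carrying out the hyperbolic expansion of $W = \alpha\beta\bar\gamma$. A secondary technicality is carefully tracking signs across the complex-conjugate denominators $\pm iz + (k\pi)^2/2$ to ensure reality of the final expression. Once those hurdles are cleared, the remaining algebra is purely mechanical.
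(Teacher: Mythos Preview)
Your proof is correct and follows essentially the same route as the paper: both expand $|f|^2$ against $\cos(k\pi x)$, evaluate the resulting integrals via $\int_0^1 e^{cx}\cos(k\pi x)\,dx = c(e^c-1)/(c^2+(k\pi)^2)$ for even $k$, and then identify the pieces with $P_k$ and $Q_k$ using the relations $p^2-q^2=-(k\pi)^2/2$, $2pq=z$, $p^2+q^2=\sqrt{z^2+(k\pi)^4/4}$. Your three ``pairs'' are exactly the paper's $I_1+I_2$, $I_3$, and $I_4+I_5$; your key identity $\Re W = P_k$, $\Im W = -Q_k$ is the content of the paper's Lemmas on $P_k$ and $Q_k$ (used there to verify the analogues of \eqref{I4-step4.1}--\eqref{I4-step4.2}), and your final collection over $(p^2+q^2)^2$ matches the paper's simplification of $\widetilde{\mathcal A},\widetilde{\mathcal B}$ into $\mathcal A,\mathcal B$ --- the only difference is that you work throughout in the $(p,q)$ and hyperbolic notation, which compresses the bookkeeping.
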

Here and in what follows, the real part and the imaginary part of a complex number $z$ are denoted by $\Re(z)$ and $\Im(z)$, respectively.

\medskip


The rest of this section is devoted to the proofs of \Cref{as-behavior} and \Cref{prop-Theta} and is organized as follows. We first establish various technical lemmas in \Cref{sect-Pre}. The proofs of \Cref{prop-Theta} and \Cref{as-behavior} are then given in \Cref{sect-prop-Theta} and \Cref{sect-as-behavior}, respectively.

\subsection{Preliminaries} \label{sect-Pre}

In the proofs of \Cref{as-behavior} and \Cref{prop-Theta}, we frequently use the following properties of $\Lambda_{1,k}(z)$ and $\Lambda_{2,k}(z)$, which are direct consequences of their definitions in \eqref{def-Lambda}.

\begin{lemma} \label{formulaforlambda}
Let $k \in \mathbb{N}^*$. We have
\begin{align} 	\label{Lamdaat0}  
\Lambda_{1,k}(0) = - \Lambda_{2,k}(0) = \displaystyle \frac{k \pi i}{\sqrt{2}},
\end{align}
and, for $z \in \mathbb{R} \setminus \{0\}$, 
\begin{align}
\label{sqroot}
    \Lambda_{1,k}(z) = - \Lambda_{2,k}(z) = \sqrt{\frac{1}{2}\sqrt{z^2 + \frac{(k \pi)^4}{4} } - \frac{(k \pi)^2}{4} } + i \; \mathrm{sign}(z) \; \sqrt{\frac{1}{2}\sqrt{z^2 + \frac{(k \pi)^4}{4}} + \frac{(k \pi)^2}{4}},
\end{align}  
where $\mathrm{sign}(z) = 1$  if $z > 0$ and $\mathrm{sign}(z) = -1$ if $z < 0$. Consequently, for  $z \in \mR$, 
 	\begin{align} \label{rexim}
 		\Re (\Lambda_{1,k}(z)) \; \Im (\Lambda_{1,k}(z)) = \frac{z}{2}, 
 	\end{align}
\begin{align}	\label{reim2}
	4 \Big(\Re(\Lambda_{1,k}(z)) \Big)^2 + (k \pi)^2 = 4 \Big( \Im (\Lambda_{1,k}(z)) \Big)^2 - (k \pi)^2 = 2 \sqrt{z^2 + \frac{(k \pi)^4}{4}}, 
\end{align} 	
and
\begin{align}	\label{behavereim}
\Re (\Lambda_{1,k} (z)) = \sqrt{\frac{z}{2}} + O\left(z^{-\frac{1}{2}} \right), \quad \Im (\Lambda_{1,k} (z)) = \sqrt{\frac{z}{2}} + O \left(z^{-\frac{1}{2}} \right) \mbox{ as }  z \to +\infty.
\end{align} 
\end{lemma}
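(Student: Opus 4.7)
The lemma is essentially a direct computation: given the definition $\Lambda_{j,k}(z) = \lambda_j(z + i(k\pi)^2/2)$, I need to extract the two square roots of $iw$ for $w = z + i(k\pi)^2/2$, i.e.\ of $-\frac{(k\pi)^2}{2} + iz$. The plan is to write $\Lambda = a + ib$, use the equations $a^2 - b^2 = -(k\pi)^2/2$ (from real parts), $2ab = z$ (from imaginary parts), together with the modulus identity $a^2 + b^2 = |iw| = \sqrt{z^2 + (k\pi)^4/4}$. Adding and subtracting the first and third equations yields
\[
a^2 = \tfrac{1}{2}\sqrt{z^2 + (k\pi)^4/4} - (k\pi)^2/4, \qquad b^2 = \tfrac{1}{2}\sqrt{z^2 + (k\pi)^4/4} + (k\pi)^2/4.
\]
The sign convention $\arg(\lambda_1(w)) \in (-\pi/2, \pi/2]$ forces $\Re(\Lambda_{1,k}(z)) = a \geq 0$, and then $2ab = z$ forces $\Im(\Lambda_{1,k}(z))$ to have the sign of $z$ (when $z \neq 0$). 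This gives \eqref{sqroot}; the formula \eqref{Lamdaat0} is the special case $z=0$, where $a=0$ and the branch choice picks $b = k\pi/\sqrt{2}$ rather than $-k\pi/\sqrt{2}$ (the argument $\pi/2$ lies in $(-\pi/2, \pi/2]$ but $-\pi/2$ does not). Finally, $\Lambda_{2,k}(z) = -\Lambda_{1,k}(z)$ because it must be the other square root.

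The three consequences follow from elementary algebra on \eqref{sqroot}. The identity \eqref{rexim} is just $ab = z/2$. For \eqref{reim2}, I would multiply the expressions for $a^2$ and $b^2$ by $4$ and add/subtract $(k\pi)^2$ to clear the $(k\pi)^2/4$ terms, landing on $2\sqrt{z^2 + (k\pi)^4/4}$ in both cases.

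For the asymptotics \eqref{behavereim}, expand $\sqrt{z^2 + (k\pi)^4/4} = z + O(z^{-1})$ as $z \to +\infty$. Substituting into the formula for $a^2$ gives $a^2 = z/2 + O(1)$, and then
\[
a = \sqrt{z/2}\,\sqrt{1 + O(z^{-1})} = \sqrt{z/2} + O(z^{-1/2});
\]
the same reasoning applies to $b$.

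There is really no conceptual obstacle here — the only place where care is needed is the branch selection. The potential pitfall is keeping the sign of $\Im(\Lambda_{1,k}(z))$ consistent with the convention on $\arg(\lambda_1)$ when $z$ changes sign, which is precisely why the $\mathrm{sign}(z)$ factor appears in \eqref{sqroot} and why the $z=0$ case must be handled separately.
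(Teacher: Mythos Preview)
Your proof is correct and is the natural direct computation; the paper itself does not give a proof of this lemma, stating only that the properties are ``direct consequences of their definitions,'' so your argument is exactly the kind of verification the paper leaves to the reader.
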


We have the following representation of $\Theta_k$ defined by \eqref{Thetakformula}. 

\begin{lemma} \label{lem-Theta-I} Let $k \in \mN^*$  and let $z \ge 0$.  We have 
\begin{align}	\label{Thetak=sum}
   \Theta_k(z) = \sum_{j = 1}^5 I_j(z),
    \end{align} where  
 \begin{align}	\label{defI1}
   I_1(z) = \left|e^{\Lambda_{2,k}(z)} - 1 \right|^2 \int_0^1 e^{2x\Re (\Lambda_{1,k}(z))} \cos(k \pi x) dx, 
 \end{align}
 \begin{align}	\label{defI2}
        I_2(z) = \left|e^{\Lambda_{1,k}(z)} - 1 \right|^2 \int_{0}^1 e^{2x\Re (\Lambda_{2,k}(z))} \cos(k \pi x) dx, 
 \end{align}
 \begin{align}	\label{defI3}
        I_3(z) = 2\Re \left\{ \left(e^{\Lambda_{2,k}(z)} - 1 \right) \left(1 - e^{\overline{\Lambda_{1,k}(z)}} \right) \int_0^1  e^{\left(\Lambda_{1,k}(z) + \overline{\Lambda_{2,k}(z)} \right)x} \cos(k \pi x) dx \right\},
\end{align}
 \begin{align}    \label{defI4}
 I_4(z) = 2\Re \left\{ \left(e^{\Lambda_{2,k}(z)} - 1 \right)  \left(e^{\overline{\Lambda_{1,k}(z)}} - e^{\overline{\Lambda_{2,k}(z)}} \right) \int_0^1 e^{\Lambda_{1,k}(z) x} \cos(k \pi x) dx \right\},  
\end{align}
\begin{align}	\label{defI5}
        I_5(z) = 2\Re \left\{  \left(1 - e^{\Lambda_{1,k}(z)} \right) \left(e^{\overline{\Lambda_{1,k}(z)}} - e^{\overline{\Lambda_{2,k}(z)}} \right) \int_0^1 e^{\Lambda_{2,k}(z) x} \cos(k \pi x) dx \right\}. 
\end{align}
\end{lemma}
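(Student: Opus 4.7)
The statement is a straightforward expansion identity, so the plan is purely computational. Writing
\begin{align*}
A(z,x) &= \bigl(e^{\Lambda_{2,k}(z)} - 1\bigr) e^{\Lambda_{1,k}(z) x}, \\
B(z,x) &= \bigl(1 - e^{\Lambda_{1,k}(z)}\bigr) e^{\Lambda_{2,k}(z) x}, \\
C(z)   &= e^{\Lambda_{1,k}(z)} - e^{\Lambda_{2,k}(z)},
\end{align*}
the integrand of $\Theta_k(z)$ in \eqref{Thetakformula} is $|A + B + C|^2 \cos(k \pi x)$. The plan is simply to expand
\begin{align*}
|A+B+C|^2 = |A|^2 + |B|^2 + |C|^2 + 2\Re(A \overline{B}) + 2\Re(A \overline{C}) + 2\Re(B \overline{C})
\end{align*}
and then multiply by $\cos(k \pi x)$ and integrate over $x \in (0,1)$, bringing the factors that do not depend on $x$ outside the integral.

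The main (and only) observation to make is that the term $|C|^2$ is independent of $x$, so its contribution to $\Theta_k(z)$ is
\begin{align*}
|C(z)|^2 \int_0^1 \cos(k \pi x)\, dx = |C(z)|^2 \cdot \frac{\sin(k\pi)}{k \pi} = 0,
\end{align*}
for every $k \in \mN^*$. This is what reduces the six a priori terms to the five terms $I_1, \ldots, I_5$ appearing in the statement.

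For the remaining five terms I would match them one by one. The squares $|A|^2$ and $|B|^2$ give $I_1$ and $I_2$ directly, using that $|e^{\Lambda_{1,k}(z)x}|^2 = e^{2x \Re(\Lambda_{1,k}(z))}$ and likewise for $\Lambda_{2,k}$. The cross term $2\Re(A \overline{B})$ pulls the $x$-independent factor $\bigl(e^{\Lambda_{2,k}(z)} - 1\bigr)\bigl(1 - e^{\overline{\Lambda_{1,k}(z)}}\bigr)$ out of the real part and leaves the $x$-dependent exponential $e^{(\Lambda_{1,k}(z) + \overline{\Lambda_{2,k}(z)}) x}$ inside the integral, which is exactly $I_3$. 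The cross terms $2\Re(A \overline{C})$ and $2\Re(B \overline{C})$ are handled identically and yield $I_4$ and $I_5$, respectively. Summing the five contributions gives \eqref{Thetak=sum}.

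There is no real obstacle: the argument is a bookkeeping exercise in expanding a squared modulus and noting that one of the six resulting pieces is killed by the orthogonality $\int_0^1 \cos(k \pi x)\,dx = 0$. The only point that requires a little care is to keep track of which factors depend on $x$ and which are constants of $x$ when pulling terms out of $\Re\{\cdot\}$, which is legitimate because the prefactors are complex constants with respect to the integration variable.
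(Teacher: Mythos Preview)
Your proposal is correct and follows essentially the same approach as the paper: expand $|A+B+C|^2$ into six terms, observe that the $|C|^2$ contribution vanishes because $\int_0^1 \cos(k\pi x)\,dx = 0$, and match the remaining five terms with $I_1,\ldots,I_5$. The paper's proof is terser but identical in substance.
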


\begin{proof} 
By \eqref{Thetakformula}, we have
\begin{align} \label{lem-Theta-I-p1}
   \Theta_k(z) = \sum_{j = 1}^5 I_j(z) + \left|e^{\Lambda_{1,k}(z)} - e^{\Lambda_{2,k}(z)} \right|^2 \int_0^1 \cos(k \pi x) dx. 
    \end{align} 
    Since $\int_0^1 \cos(k \pi x) dx = 0$, the conclusion follows from \eqref{lem-Theta-I-p1}. 
\end{proof}

We next study $I_1, \cdots, I_5$ for $k$ even. Concerning $I_1$ and $I_2$ given in \eqref{defI1} and \eqref{defI2}, we have the following result. 

\begin{lemma} \label{lem-I1I2} Let $k \in \mN^*$ be {\bf even} and let $z \ge 0$. We have 
\begin{align}    \label{I2=I1} 
        I_1 (z) = I_2(z) = \frac{\Re(\Lambda_{1,k}(z))}{\sqrt{z^2 + \frac{(k \pi)^4}{4}}}  P_k(z), 
   \end{align}
where $P_k(z)$ is given in \eqref{def-Pk}.
\end{lemma}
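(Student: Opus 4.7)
The proof is a direct calculation that relies on (i) the explicit antiderivative of $e^{ax}\cos(k\pi x)$, (ii) the even parity of $k$ to eliminate boundary terms, (iii) identity \eqref{reim2} to simplify the resulting denominator, and (iv) the reflection symmetry $\Lambda_{2,k}(z) = -\Lambda_{1,k}(z)$ from \eqref{sqroot} to identify $I_1$ and $I_2$.

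First I would use $\int e^{ax}\cos(k\pi x)\,dx = \dfrac{e^{ax}\bigl(a\cos(k\pi x) + k\pi\sin(k\pi x)\bigr)}{a^{2}+(k\pi)^{2}}$. Since $k$ is even, $\cos(k\pi)=1$ and $\sin(k\pi)=0$, so
\begin{align*}
\int_{0}^{1} e^{ax}\cos(k\pi x)\,dx \;=\; \frac{a\bigl(e^{a}-1\bigr)}{a^{2}+(k\pi)^{2}}.
\end{align*}
Applying this to $I_{1}$ with $a = 2\Re(\Lambda_{1,k}(z))$ and using \eqref{reim2}, namely $4\Re(\Lambda_{1,k}(z))^{2}+(k\pi)^{2} = 2\sqrt{z^{2}+(k\pi)^{4}/4}$, one obtains
\begin{align*}
I_{1}(z) \;=\; \bigl|e^{\Lambda_{2,k}(z)}-1\bigr|^{2}\, \frac{\Re(\Lambda_{1,k}(z))\bigl(e^{2\Re(\Lambda_{1,k}(z))}-1\bigr)}{\sqrt{z^{2}+(k\pi)^{4}/4}},
\end{align*}
which is exactly $\Re(\Lambda_{1,k}(z))\,P_{k}(z)/\sqrt{z^{2}+(k\pi)^{4}/4}$ in view of \eqref{def-Pk}.

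Next I would treat $I_{2}$ analogously with $a = 2\Re(\Lambda_{2,k}(z)) = -2\Re(\Lambda_{1,k}(z))$ (the equality coming from \eqref{sqroot}). Since $a^{2}$ is unchanged, the denominator stays $2\sqrt{z^{2}+(k\pi)^{4}/4}$, and the numerator becomes $-2\Re(\Lambda_{1,k}(z))\bigl(e^{-2\Re(\Lambda_{1,k}(z))}-1\bigr) = 2\Re(\Lambda_{1,k}(z))\,e^{-2\Re(\Lambda_{1,k}(z))}\bigl(e^{2\Re(\Lambda_{1,k}(z))}-1\bigr)$. To match this with the $I_{1}$ formula I would use the reflection identity $e^{\Lambda_{2,k}(z)} = e^{-\Lambda_{1,k}(z)}$, which yields
\begin{align*}
\bigl|e^{\Lambda_{1,k}(z)}-1\bigr|^{2} \;=\; e^{2\Re(\Lambda_{1,k}(z))}\,\bigl|e^{\Lambda_{2,k}(z)}-1\bigr|^{2},
\end{align*}
so that the factor $e^{-2\Re(\Lambda_{1,k}(z))}$ in the integral cancels against the factor $e^{2\Re(\Lambda_{1,k}(z))}$ arising from this prefactor exchange, producing precisely the same expression obtained for $I_{1}$.

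There is essentially no obstacle: the proof is four lines of algebra. The only mild point to mention is that at $z=0$ one has $\Re(\Lambda_{1,k}(0)) = 0$ by \eqref{Lamdaat0}, so both sides of the claimed identity vanish and the formula remains valid (the intermediate fraction $a(e^{a}-1)/(a^{2}+(k\pi)^{2})$ is analytic at $a=0$ since $(k\pi)^{2}\neq 0$).
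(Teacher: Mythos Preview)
Your proof is correct and follows essentially the same approach as the paper: both compute the integral $\int_0^1 e^{ax}\cos(k\pi x)\,dx$ (the paper via the complex exponential decomposition of $\cos$, you via the real antiderivative), invoke $e^{k\pi i}=1$ and \eqref{reim2} to reach \eqref{formula-I1}, and then use the reflection $\Lambda_{2,k}=-\Lambda_{1,k}$ together with the identity $|e^{\Lambda_{1,k}}-1|^2 = e^{2\Re(\Lambda_{1,k})}|e^{\Lambda_{2,k}}-1|^2$ (equivalently the paper's $|e^{a+ib}-1|^2(e^{-2a}-1) = -|e^{-(a+ib)}-1|^2(e^{2a}-1)$) to conclude $I_2=I_1$.
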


\begin{proof}
We have
    \begin{align*}
        I_1(z)        &= \frac{1}{2} \left|e^{\Lambda_{2,k}(z)} - 1 \right|^2 \int_0^1 \left( e^{ \left(2\Re (\Lambda_{1,k}(z)) + k \pi i \right)x } + e^{ \left(2\Re(\Lambda_{1,k}(z)) - k \pi i \right) x} \right) dx \nonumber \\
        &= \frac{1}{2} \left|e^{\Lambda_{2,k}(z)} - 1 \right|^2 \left(\frac{e^{2\Re (\Lambda_{1,k}(z)) + k \pi i} - 1}{2\Re (\Lambda_{1,k}(z)) + k \pi i} + \frac{e^{2\Re(\Lambda_{1,k}(z)) - k \pi i} - 1}{2\Re (\Lambda_{1,k}(z)) - k \pi i}\right).
    \end{align*}
Using the fact that $e^{k \pi i} = 1$ since $k$ is even, we derive that 
  \begin{align*}
        I_1(z)        =  \frac{1}{2} \left|e^{\Lambda_{2,k}(z)} - 1 \right|^2 \left(e^{2\Re (\Lambda_{1,k}(z))} - 1 \right) \frac{4\Re (\Lambda_{1,k}(z))}{4 (\Re(\Lambda_{1,k}(z)))^2 + (k \pi)^2}. 
    \end{align*}

   Applying \eqref{reim2}, we get
    \begin{align}	\label{formula-I1}
	I_1(z) =      \frac{1}{\sqrt{z^2 + \frac{(k \pi)^4}{4}}}\left|e^{\Lambda_{2,k}(z)} - 1 \right|^2 \left(e^{2\Re (\Lambda_{1,k}(z))} - 1 \right)\Re(\Lambda_{1,k}(z)).
     \end{align}     
    Similarly,
   \begin{align}   	\label{formula-I2}
        I_2(z) = \frac{1}{\sqrt{z^2 + \frac{(k \pi)^4}{4}}}\left|e^{\Lambda_{1,k}(z)} - 1 \right|^2 \left(e^{2\Re (\Lambda_{2,k}(z))} - 1 \right)\Re(\Lambda_{2,k}(z)).
   \end{align}
Since, for $a, b \in \mR$, 
$$
|e^{a + i b} - 1|^2 \Big( e^{- 2 a} - 1 \Big) = - |e^{-(a + i b)} - 1|^2 \Big( e^{2 a} - 1 \Big), 
$$   
by applying \Cref{formulaforlambda}, we derive from \eqref{formula-I1} and \eqref{formula-I2} that $I_2(z) = I_1(z)$ and the conclusion follows from \eqref{formula-I1}. 
\end{proof}

Concerning $I_3$ given in  \eqref{defI3}, we have the following result. 

\begin{lemma} \label{lem-I3} Let $k \in \mN^*$ be {\bf even} and let $z \ge 0$. We have
\begin{align}\label{I3}  
I_3(z) = - \frac{2  \Im (\Lambda_{1,k}(z)) }{\sqrt{ z^2 + \frac{(k \pi)^4}{4} }} Q_k(z),  
\end{align}
where  $Q_k(z)$ is given in \eqref{def-Qk}.
\end{lemma}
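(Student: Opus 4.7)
\textbf{Proof plan for Lemma \ref{lem-I3}.} The idea is to reduce $I_3(z)$ to an explicit one-variable integral in $x$, exploit parity of $k$, then apply \eqref{reim2} to simplify the denominator, and finally rewrite a real part of a purely imaginary multiple of a complex number as minus its imaginary part, so that the expression fits the definition \eqref{def-Qk} of $Q_k$.

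\textbf{Step 1: Simplify the exponent in the integrand.} By \eqref{sqroot} and \eqref{Lamdaat0} of \Cref{formulaforlambda}, $\Lambda_{2,k}(z) = -\Lambda_{1,k}(z)$, hence
\[
\Lambda_{1,k}(z) + \overline{\Lambda_{2,k}(z)} \;=\; \Lambda_{1,k}(z) - \overline{\Lambda_{1,k}(z)} \;=\; 2 i\, \Im(\Lambda_{1,k}(z)).
\]
Setting $\alpha := 2 i\, \Im(\Lambda_{1,k}(z))$, the integral in \eqref{defI3} becomes $\int_0^1 e^{\alpha x}\cos(k\pi x)\,dx$.

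\textbf{Step 2: Compute the integral explicitly using that $k$ is even.} Writing $\cos(k\pi x) = \tfrac12(e^{ik\pi x}+e^{-ik\pi x})$ and using $e^{\pm i k \pi}=1$ (since $k$ is even) to collect the two resulting terms, a direct computation gives
\[
\int_0^1 e^{\alpha x}\cos(k\pi x)\,dx \;=\; \frac{\alpha\,(e^{\alpha}-1)}{\alpha^2 + (k\pi)^2}.
\]
With $\alpha = 2 i \Im(\Lambda_{1,k}(z))$, we have $\alpha^2 = -4\bigl(\Im(\Lambda_{1,k}(z))\bigr)^2$, and \eqref{reim2} yields
\[
\alpha^2 + (k\pi)^2 \;=\; (k\pi)^2 - 4\bigl(\Im(\Lambda_{1,k}(z))\bigr)^2 \;=\; -\,2\sqrt{\,z^2 + \tfrac{(k\pi)^4}{4}\,}.
\]
Therefore
\[
\int_0^1 e^{\alpha x}\cos(k\pi x)\,dx \;=\; -\,\frac{i\,\Im(\Lambda_{1,k}(z))\bigl(e^{2 i \Im(\Lambda_{1,k}(z))}-1\bigr)}{\sqrt{\,z^2+\frac{(k\pi)^4}{4}\,}}.
\]

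\textbf{Step 3: Assemble and recognize $Q_k$.} Substituting this into \eqref{defI3}, pulling out the real scalar $\Im(\Lambda_{1,k}(z))/\sqrt{z^2+(k\pi)^4/4}$, and using $(1-e^{\overline{\Lambda_{1,k}(z)}}) = -(e^{\overline{\Lambda_{1,k}(z)}}-1)$, we obtain
\[
I_3(z) \;=\; \frac{2\,\Im(\Lambda_{1,k}(z))}{\sqrt{\,z^2+\frac{(k\pi)^4}{4}\,}}\; \Re\!\left\{ i\,W(z) \right\},
\]
where
\[
W(z) \;:=\; \bigl(e^{\Lambda_{2,k}(z)}-1\bigr)\bigl(e^{\overline{\Lambda_{1,k}(z)}}-1\bigr)\bigl(e^{2 i \Im(\Lambda_{1,k}(z))}-1\bigr).
\]
Since $\Re(iW) = -\Im(W)$ and $\Im(W) = Q_k(z)$ by \eqref{def-Qk}, identity \eqref{I3} follows.

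\textbf{Anticipated difficulty.} There is no conceptual obstacle; the only step requiring care is the sign bookkeeping in Step 3 (three sign flips: rewriting $(1-e^{\overline{\Lambda_{1,k}}})$, the minus sign arising from $\alpha^2+(k\pi)^2<0$, and the identity $\Re(iW)=-\Im(W)$). The use of $k$ even is essential in Step 2 through $e^{\pm i k\pi}=1$; without it, the two resulting fractions would not combine into a single factor $(e^\alpha-1)$.
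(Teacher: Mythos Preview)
Your proof is correct and follows essentially the same approach as the paper: both compute the integral by expanding $\cos(k\pi x)$ into exponentials, use $e^{\pm ik\pi}=1$ for even $k$ to combine the resulting fractions, invoke \eqref{reim2} to simplify the denominator, and then recognize $Q_k$ via $\Re(iW)=-\Im(W)$. The only minor point you leave implicit is that the intermediate denominators $\alpha\pm ik\pi$ are nonzero, which the paper notes via $\Im(\Lambda_{1,k}(z))\ge k\pi/\sqrt{2}$; this follows immediately from \eqref{reim2}.
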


\begin{proof}
We have    \begin{align*}
         \int_0^1  e^{\left(\Lambda_{1,k}(z) + \overline{\Lambda_{2,k}(z)} \right)x} \cos(k \pi x) dx & \mathop{=}^{\Cref{formulaforlambda}} \frac{1}{2} \int_0^1 e^{2i x \Im(\Lambda_{1,k}(z))} \left(e^{k \pi i x} + e^{- k \pi i x} \right)  dx \\
         &= \frac{1}{2} \left(\frac{e^{(2 \Im (\Lambda_{1,k}(z)) + k \pi)i} - 1}{(2 \Im (\Lambda_{1,k}(z)) + k \pi)i} + \frac{e^{(2 \Im (\Lambda_{1,k}(z)) - k \pi)i} - 1}{(2 \Im (\Lambda_{1,k}(z)) - k \pi)i} \right) 
    \end{align*}
since  $\Im(\Lambda_1(z)) \geq \frac{k \pi}{\sqrt{2}}$ by \Cref{formulaforlambda}. 
    
Since $e^{k \pi i} = 1$ for even $k$, we derive that 
     \begin{align*}
         \int_0^1  e^{\left(\Lambda_{1,k}(z) + \overline{\Lambda_{2,k}(z)} \right)x} \cos(k \pi x) dx 
  = \frac{-i}{2} \left(e^{2 i \Im (\Lambda_{1,k}(z)) } - 1 \right) \frac{4 \Im (\Lambda_{1,k}(z))}{4 (\Im (\Lambda_{1,k}(z)))^2 - (k \pi)^2}. 
    \end{align*}
Using \eqref{reim2}, we get
      \begin{align}	\label{I3-integral}
       \int_0^1  e^{\left(\Lambda_{1,k}(z) + \overline{\Lambda_{2,k}(z)} \right)x} \cos(k \pi x) dx &=  - \frac{i}{\sqrt{z^2 + \frac{(k \pi)^4}{4}}} \left(e^{2 i \Im (\Lambda_{1,k}(z)) } - 1 \right) \Im (\Lambda_{1,k}(z)). 
      \end{align}
 We deduce from \eqref{defI3} and \eqref{I3-integral} that
    $$
        I_3(z) 
        = - \frac{2}{\sqrt{z^2 + \frac{(k \pi)^4}{4}}} \; \Im (\Lambda_{1,k}(z)) \; \Im \left\{ \left(e^{\Lambda_{2,k}(z)} - 1 \right) \left( e^{\overline{\Lambda_{1,k}(z)}} - 1 \right) \left(e^{2 i \Im (\Lambda_{1,k}(z))} - 1 \right) \right\},
    $$
  which is \eqref{I3}. 
    \end{proof}
  
  Concerning $I_4$ and $I_5$ given in \eqref{defI4} and \eqref{defI5}, we have the following result. 
  
\begin{lemma} \label{lem-I4I5} Let $k \in \mN^*$ be {\bf even} and let $z \ge 0$. We have
\begin{multline}	\label{I4=I5}
- \frac{1}{2} \left(z^2 + \frac{(k \pi)^4}{4} \right) I_4(z)  = - \frac{1}{2} \left(z^2 + \frac{(k \pi)^4}{4} \right) I_5(z) 
\\[6pt]
=  P_k (z) \left( \frac{(k \pi)^2}{2}\Re (\Lambda_{1,k}(z)) + z \Im (\Lambda_{1,k}(z)) \right) \\
 -  Q_k(z) \left(z\Re (\Lambda_{1,k}(z))  - \frac{(k \pi)^2}{2} \Im (\Lambda_{1,k}(z)) \right) ,
\end{multline}
where $P_k(z)$ and $Q_k(z)$ are given in \eqref{def-Pk} and \eqref{def-Qk}. 
\end{lemma}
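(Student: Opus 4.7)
The plan is to evaluate the two integrals $\int_0^1 e^{\Lambda_{1,k}(z) x}\cos(k\pi x)\,dx$ and $\int_0^1 e^{\Lambda_{2,k}(z) x}\cos(k\pi x)\,dx$ in closed form, exploit the identity $\Lambda_{2,k}(z)=-\Lambda_{1,k}(z)$ from \Cref{formulaforlambda} to reduce both $I_4(z)$ and $I_5(z)$ to the same real part, and then separate real and imaginary parts to extract $P_k$ and $Q_k$. Writing $\Lambda_1=\Lambda_{1,k}(z)$, $\Lambda_2=\Lambda_{2,k}(z)$, $a=\Re(\Lambda_1)$, $b=\Im(\Lambda_1)$, and $w=\frac{(k\pi)^2}{2}+iz$, I note that $\Lambda_j^{2}=iz-\frac{(k\pi)^2}{2}$, hence $\Lambda_j^{2}+(k\pi)^2=w$, and moreover $|w|^2=z^{2}+\frac{(k\pi)^4}{4}$.

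First I would use the hypothesis that $k$ is even (so $e^{\pm ik\pi}=1$) to evaluate, for $j\in\{1,2\}$,
\[
\int_0^1 e^{\Lambda_j x}\cos(k\pi x)\,dx=\tfrac12\!\left[\tfrac{e^{\Lambda_j}-1}{\Lambda_j+ik\pi}+\tfrac{e^{\Lambda_j}-1}{\Lambda_j-ik\pi}\right]=\frac{(e^{\Lambda_j}-1)\Lambda_j}{\Lambda_j^{2}+(k\pi)^2}=\frac{(e^{\Lambda_j}-1)\Lambda_j}{w}.
\]
Substituting this into \eqref{defI4} and \eqref{defI5}, using $\Lambda_2=-\Lambda_1$ and the identity $(1-e^{\Lambda_1})(1-e^{\Lambda_2})=(e^{\Lambda_1}-1)(e^{\Lambda_2}-1)$, both $I_4(z)$ and $I_5(z)$ become
\[
\frac{2}{|w|^{2}}\Re\!\left\{\overline{w}\,\Lambda_1\,\underbrace{(e^{\Lambda_1}-1)(e^{\Lambda_2}-1)\left(e^{\overline{\Lambda_1}}-e^{\overline{\Lambda_2}}\right)}_{=:X(z)}\right\},
\]
which proves the first equality of \eqref{I4=I5}.

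For the explicit formula, I would multiply by $-|w|^{2}/2$ and apply $\Re(\alpha\beta)=\Re\alpha\,\Re\beta-\Im\alpha\,\Im\beta$ to the product $\Lambda_1\overline{w}\cdot X(z)$. A direct computation gives $\Re(\Lambda_1\overline{w})=\tfrac{(k\pi)^{2}}{2}a+zb$ and $\Im(\Lambda_1\overline{w})=-\!\left(za-\tfrac{(k\pi)^{2}}{2}b\right)$. It therefore remains to check the key identities
\[
-\Re(X(z))=P_k(z)\qquad\text{and}\qquad \Im(X(z))=Q_k(z).
\]
Using $\Lambda_1+\Lambda_2=0$, so that $(e^{\Lambda_1}-1)(e^{\Lambda_2}-1)=2-e^{\Lambda_1}-e^{\Lambda_2}$, I reduce $X(z)$ to $(2-e^{\Lambda_1}-e^{\Lambda_2})(e^{\overline{\Lambda_1}}-e^{\overline{\Lambda_2}})$; expanding with $e^{\Lambda_j}e^{\overline{\Lambda_\ell}}$ for $(j,\ell)\in\{1,2\}^{2}$ yields $\Re(X(z))=-4\sinh a\,(\cosh a-\cos b)$ and $\Im(X(z))=4\sin b\,(\cos b-\cosh a)$. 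Matching these with the corresponding closed forms $P_k(z)=(e^{-2a}+1-2e^{-a}\cos b)(e^{2a}-1)=4\sinh a\,(\cosh a-\cos b)$ and, after expanding $(v-1)(\bar u-1)(e^{2ib}-1)$ where $u=e^{\Lambda_1}$, $v=e^{\Lambda_2}$, $Q_k(z)=4\sin b\,(\cos b-\cosh a)$ completes the proof.

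The only genuine obstacle is the bookkeeping in the last step: the expansions of $X(z)$ and of $Q_k(z)$ both produce several terms that must cancel in pairs, and one has to be careful with signs of $\sinh a$ and $\cos b-\cosh a$. These are routine trigonometric-hyperbolic identities, however, so once the reduction to $\Re(\Lambda_1\overline{w}\,X(z))$ is in place the conclusion is a mechanical expansion.
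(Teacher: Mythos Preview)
Your proof is correct and follows essentially the same route as the paper: evaluate the integrals via $e^{\pm ik\pi}=1$, reduce $I_4$ and $I_5$ to the real part of $\Lambda_1^{3}X(z)$ (your $\overline w\Lambda_1$ equals $-\Lambda_1^{3}$ since $\Lambda_1^{2}=-\overline w$), and then split real/imaginary parts. The only cosmetic difference is that the paper verifies $\Re X=-P_k$ and $\Im X=Q_k$ by matching exponential expansions through the auxiliary \Cref{lem-Pk} and \Cref{lem-Qk}, whereas you do the equivalent check directly in $\sinh,\cosh,\sin,\cos$ form; both are routine.
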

    
\begin{proof} Note that $\Lambda_{1,k}(z) + k \pi i \neq 0$ and $\Lambda_{1,k}(z) - k \pi i \neq 0$ by \eqref{Lamdaat0} and \eqref{sqroot}. Thus,
    \begin{multline*}
        \int_0^1 e^{\Lambda_{1,k}(z) x} \cos(k \pi x)  dx = \frac{1}{2} \int_0^1 \left(e^{(\Lambda_{1,k}(z) + k \pi i) x} + e^{(\Lambda_{1,k}(z) - k \pi i) x} \right) dx  \\
        = \frac{1}{2} \left(\frac{e^{\Lambda_{1,k}(z) + k \pi i} - 1}{\Lambda_{1,k}(z) + k \pi i} + \frac{e^{\Lambda_{1,k}(z) - k \pi i} - 1}{\Lambda_{1,k}(z) - k \pi i} \right) \mathop{=}^{e^{k \pi i} = 1} \frac{\left(e^{\Lambda_{1,k}(z)} - 1 \right) \Lambda_{1,k}(z) }{(\Lambda_{1,k}(z))^2 + (k \pi)^2}. 
    \end{multline*}
    Since, by the definition of $\Lambda_{1,k}(z)$, 
    \begin{align*}
   |\Lambda_{1,k}(z)|^4 = \left|iz - \frac{(k \pi)^2}{2} \right|^2 = z^2 + \frac{(k \pi)^4}{4}, \quad      (\Lambda_{1,k}(z))^2 + (k \pi)^2 = iz + \frac{(k \pi)^2}{2} = - \overline{(\Lambda_{1,k}(z))^2},  
    \end{align*}
it follows that    
\begin{align}	\label{I4-integral}
    	\int_0^1 e^{\Lambda_{1,k}(z) x} \cos(k \pi x) dx = - \frac{1}{z^2 + \frac{(k \pi)^4}{4}} \left(e^{\Lambda_{1,k}(z)} - 1 \right) \left(\Lambda_{1,k}(z) \right)^3.  
    \end{align}
    We deduce from \eqref{defI4} and \eqref{I4-integral} that 
    \begin{align}
        I_4(z) = - \frac{ 2}{z^2 + \frac{(k \pi)^4}{4}}\Re \left\{  \left(e^{\Lambda_{1,k}(z)} - 1 \right) \left(e^{\Lambda_{2,k}(z)} - 1 \right) \left(e^{\overline{\Lambda_{1,k}(z)}} - e^{\overline{\Lambda_{2,k}(z)}} \right) \left(\Lambda_{1,k}(z) \right)^3 \right\}. 	\label{I4-step1}
    \end{align}
    Similarly,
    \begin{multline}  	\label{I5-step1}
        I_5(z) = -\frac{ 2}{z^2 + \frac{(k \pi)^4}{4}}\Re \left\{ \left(e^{\Lambda_{1,k}(z)} - 1 \right)  \left(e^{\Lambda_{2,k}(z)} - 1 \right)\left(e^{\overline{\Lambda_{2,k}(z)}} - e^{\overline{\Lambda_{1,k}(z)}} \right) \left(\Lambda_{2,k}(z) \right)^3 \right\}  \\ 
        = I_4(z).
    \end{multline}  

  	Note that 
    \begin{align}
       \Re \left\{ \left(\Lambda_{1,k}(z) \right)^3 \right\} &= \Re \left\{\left(-\frac{(k \pi)^2}{2} + iz \right) \Lambda_{1,k}(z) \right\} = - \frac{(k \pi)^2}{2}\Re (\Lambda_{1,k}(z)) - z \Im (\Lambda_{1,k}(z)), \label{I4-step3.1}\\
        \Im \left\{ \left(\Lambda_{1,k}(z) \right)^3 \right\} &=  \Im \left\{\left(-\frac{(k \pi)^2}{2} + iz \right) \Lambda_{1,k}(z) \right\} = z\Re (\Lambda_{1,k}(z)) - \frac{(k \pi)^2}{2} \Im (\Lambda_{1,k}(z)). 	\label{I4-step3.2}
    \end{align}
        
We claim that    
\begin{align}	\label{I4-step4.1} 
  \Re \left\{ \left(e^{\Lambda_{1,k}(z)} - 1 \right) \left(e^{\Lambda_{2,k}(z)} - 1 \right) \left(e^{\overline{\Lambda_{1,k}(z)}} - e^{\overline{\Lambda_{2,k}(z)}} \right) \right\} = - P_k(z), 
\end{align}     
and	
\begin{align}	\label{I4-step4.2}
\Im \left\{ \left(e^{\Lambda_{1,k}(z)} - 1 \right) \left(e^{\Lambda_{2,k}(z)} - 1 \right) \left(e^{\overline{\Lambda_{1,k}(z)}} - e^{\overline{\Lambda_{2,k}(z)}} \right)  \right\} = Q_k(z).
\end{align}
Admitting this claim, we obtain \eqref{I4=I5} from \eqref{I4-step1}, \eqref{I5-step1}, \eqref{I4-step3.1}, \eqref{I4-step3.2}, \eqref{I4-step4.1}, and \eqref{I4-step4.2}.

\medskip

    It remains to prove \eqref{I4-step4.1} and \eqref{I4-step4.2}. We have, since $e^{\Lambda_{1,k}(z) + \Lambda_{2,k}(z)} = e^0 = 1$, 
    \begin{multline*}
         \left(e^{\Lambda_{1,k}(z)} - 1 \right) \left(e^{\Lambda_{2,k}(z)} - 1 \right) \left(e^{\overline{\Lambda_{1,k}(z)}} - e^{\overline{\Lambda_{2,k}(z)}} \right) \\
        = 2 \left(e^{\overline{\Lambda_{1,k}(z)}} - e^{\overline{\Lambda_{2,k}(z)}} \right) - \left(e^{\Lambda_{1,k}(z) + \overline{\Lambda_{1,k}(z)}} - e^{\Lambda_{2,k}(z) + \overline{\Lambda_{2,k}(z)}} + e^{\Lambda_{2,k}(z) + \overline{\Lambda_{1,k}(z)}} - e^{\Lambda_{1,k}(z) + \overline{\Lambda_{2,k}(z)}}\right). 
    \end{multline*}
Since
    \begin{align*}
    	e^{\Lambda_{1,k}(z) + \overline{\Lambda_{1,k}(z)}} - e^{\Lambda_{2,k}(z) + \overline{\Lambda_{2,k}(z)}}  \in \mR, \quad  e^{\Lambda_{2,k}(z) + \overline{\Lambda_{1,k}(z)}} - e^{\Lambda_{1,k}(z) + \overline{\Lambda_{2,k}(z)}} \in i \mR, 
    \end{align*}
it follows that
\begin{multline}\label{I4I5-p1}
        \Re \left\{ \left(e^{\Lambda_{1,k}(z)} - 1 \right) \left(e^{\Lambda_{2,k}(z)} - 1 \right) \left(e^{\overline{\Lambda_{1,k}(z)}} - e^{\overline{\Lambda_{2,k}(z)}} \right) \right\} \\ = e^{\Lambda_{1,k}(z)} - e^{\Lambda_{2,k}(z)} + e^{\overline{\Lambda_{1,k}(z)}} - e^{\overline{\Lambda_{2,k}(z)}}  - e^{\Lambda_{1,k}(z) + \overline{\Lambda_{1,k}(z)}} + e^{\Lambda_{2,k}(z) + \overline{\Lambda_{2,k}(z)}},  
    \end{multline}
    and
    \begin{multline}\label{I4I5-p2}
          \Im \left\{ \left(e^{\Lambda_{1,k}(z)} - 1 \right) \left(e^{\Lambda_{2,k}(z)} - 1 \right) \left(e^{\overline{\Lambda_{1,k}(z)}} - e^{\overline{\Lambda_{2,k}(z)}} \right)  \right\} \\[6pt]
          = \Im \left( e^{\overline{\Lambda_{1,k}(z)}} - e^{\overline{\Lambda_{2,k}(z)}} - e^{\Lambda_{1,k}(z)} + e^{\Lambda_{2,k}(z)} - e^{\Lambda_{2,k}(z) + \overline{\Lambda_{1,k}(z)}} + e^{\Lambda_{1,k}(z) + \overline{\Lambda_{2,k}(z)}} \right). 
    \end{multline}  
    
Applying \Cref{lem-Pk} below, we obtain \eqref{I4-step4.1} from \eqref{I4I5-p1}. Applying \Cref{lem-Qk} below, we reach \eqref{I4-step4.2} from \eqref{I4I5-p2}. 

\medskip 
The proof is complete.  
\end{proof}

In the proof of \Cref{lem-I4I5}, we have used the following two results. 

\begin{lemma} \label{lem-Pk} Let $k \in \mN^*$ and let $z \geq 0$. We have
\be \label{lem-Pk-cl1}
P_k(z) = - e^{\overline{\Lambda_{1, k}(z)}} - e^{\Lambda_{1, k}(z)} + e^{2 \Re (\Lambda_{1, k}(z))}
- e^{\Lambda_{2, k}(z) + \overline{\Lambda_{2, k}(z)}} +  e^{\Lambda_{2, k}(z)} + e^{\overline{\Lambda_{2, k}(z)}}. 
\ee 
Consequently, 
\begin{align}\label{lem-Pk-cl2}
P_k(z) =
e^{2\Re (\Lambda_{1,k}(z))} - e^{-2\Re (\Lambda_{1,k}(z))}  - 2 \Re \left(e^{ \Lambda_{1,k}(z)} - e^{-\Lambda_{1,k}(z)} \right),
\end{align}
and
\be 	\label{lem-Pk-cl3}
P_k(z) \geq
e^{2\Re (\Lambda_{1,k}(z))} - e^{-2\Re (\Lambda_{1,k}(z))}  - 2 e^{ \Re(\Lambda_{1,k}(z))} + 2 e^{-\Re (\Lambda_{1,k}(z))}.
\ee
\end{lemma}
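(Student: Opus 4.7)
The plan is to prove the three assertions in sequence, with all three ultimately resting on the identity $\Lambda_{2,k}(z)=-\Lambda_{1,k}(z)$ recorded in \Cref{formulaforlambda}.

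For the first identity \eqref{lem-Pk-cl1}, I would start from the definition \eqref{def-Pk} and expand
\[
|e^{\Lambda_{2,k}(z)}-1|^{2}=\bigl(e^{\Lambda_{2,k}(z)}-1\bigr)\bigl(e^{\overline{\Lambda_{2,k}(z)}}-1\bigr)=e^{\Lambda_{2,k}(z)+\overline{\Lambda_{2,k}(z)}}-e^{\Lambda_{2,k}(z)}-e^{\overline{\Lambda_{2,k}(z)}}+1,
\]
and use $e^{2\Re(\Lambda_{1,k}(z))}=e^{\Lambda_{1,k}(z)+\overline{\Lambda_{1,k}(z)}}$. Multiplying the two factors produces eight exponential terms. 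The key simplification is that, since $\Lambda_{2,k}(z)=-\Lambda_{1,k}(z)$, one has
\[
e^{\Lambda_{2,k}(z)+\overline{\Lambda_{2,k}(z)}}\cdot e^{\Lambda_{1,k}(z)+\overline{\Lambda_{1,k}(z)}}=e^{0}=1,
\]
as well as $e^{\Lambda_{2,k}(z)}\cdot e^{\Lambda_{1,k}(z)+\overline{\Lambda_{1,k}(z)}}=e^{\overline{\Lambda_{1,k}(z)}}$ and $e^{\overline{\Lambda_{2,k}(z)}}\cdot e^{\Lambda_{1,k}(z)+\overline{\Lambda_{1,k}(z)}}=e^{\Lambda_{1,k}(z)}$. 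Substituting these collapses the eight terms to the six claimed in \eqref{lem-Pk-cl1}, with the leading constants $\pm 1$ cancelling.

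For \eqref{lem-Pk-cl2}, I would substitute $\Lambda_{2,k}(z)=-\Lambda_{1,k}(z)$ directly into \eqref{lem-Pk-cl1}: the third and fourth terms become $e^{2\Re(\Lambda_{1,k}(z))}-e^{-2\Re(\Lambda_{1,k}(z))}$, while the remaining four terms pair into complex conjugates and yield $-2\Re\!\left(e^{\Lambda_{1,k}(z)}-e^{-\Lambda_{1,k}(z)}\right)$, which is \eqref{lem-Pk-cl2}.

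For the lower bound \eqref{lem-Pk-cl3}, writing $a:=\Re(\Lambda_{1,k}(z))$ and $b:=\Im(\Lambda_{1,k}(z))$ (noting $a\geq 0$ by \Cref{formulaforlambda}), one has $\Re(e^{\Lambda_{1,k}(z)}-e^{-\Lambda_{1,k}(z)})=(e^{a}-e^{-a})\cos b$, so the claim reduces to
\[
(e^{a}-e^{-a})(1-\cos b)\geq 0,
\]
which holds because $a\geq 0$ and $1-\cos b\geq 0$. The main thing to be careful about is just the sign of $\Re(\Lambda_{1,k}(z))$, which is guaranteed by the convention on $\arg(\lambda_{1}(\cdot))$; otherwise the argument is entirely algebraic and no genuine obstacle is expected.
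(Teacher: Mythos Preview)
Your proposal is correct and follows essentially the same route as the paper: expand $P_k(z)$ into eight exponential terms, use $\Lambda_{2,k}(z)=-\Lambda_{1,k}(z)$ from \Cref{formulaforlambda} to collapse them (with the $+1$ and $-1$ cancelling), and then bound $\Re(e^{\Lambda_{1,k}}-e^{-\Lambda_{1,k}})=(e^{a}-e^{-a})\cos b$ using $a\ge 0$. The paper presents the same computation with the same ingredients.
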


\begin{proof} We have, by the definition of $P_k$, 
\begin{multline}\label{Pk-p1}
P_k(z) = e^{\Lambda_{2, k}(z) + \overline{\Lambda_{2, k}(z)} + 2 \Re (\Lambda_{1, k}(z))} - e^{\Lambda_{2, k}(z)  + 2 \Re (\Lambda_{1, k}(z))} -e^{\overline{\Lambda_{2, k}(z)} + 2 \Re (\Lambda_{1, k}(z)) } + e^{2 \Re \Lambda_{1, k}(z)}  \\[6pt]
- \left(e^{\Lambda_{2, k}(z) + \overline{\Lambda_{2, k}(z)}} - e^{\Lambda_{2, k}(z)} -e^{\overline{\Lambda_{2, k}(z)}} + 1\right). 
\end{multline}
Since, by \Cref{formulaforlambda}, 
$$
\Lambda_{2, k}(z) + \overline{\Lambda_{2, k}(z)} + 2 \Re (\Lambda_{1, k}(z)) = 0, \; \;  \Lambda_{2, k}(z)  + 2 \Re (\Lambda_{1, k}(z)) = \overline{\Lambda_{1, k}(z)}, \; \;   \overline{\Lambda_{2, k}(z)} + 2 \Re \Lambda_{1, k}(z) = \Lambda_{1, k}(z),  
$$
assertions \eqref{lem-Pk-cl1} and  \eqref{lem-Pk-cl2} follow from \eqref{Pk-p1}.

To derive \eqref{lem-Pk-cl3} from \eqref{lem-Pk-cl2}, one just notes that, since $\Re(\Lambda_{1,k}(z)) \geq 0$ for $z \geq 0$,
\begin{align*}
\Re \left(e^{ \Lambda_{1,k}(z)} - e^{-\Lambda_{1,k}(z)} \right) =  \left(	e^{ \Re(\Lambda_{1,k}(z))} - e^{- \Re(\Lambda_{1,k}(z))} \right) \cos(\Im(\Lambda_{1,k}(z))) \leq  	e^{ \Re(\Lambda_{1,k}(z))} - e^{- \Re(\Lambda_{1,k}(z))}.
\end{align*}
 The proof is complete. 
\end{proof}

\begin{lemma} \label{lem-Qk} Let $k \in \mN^*$ and let $z \ge 0$. We have 
\begin{multline} \label{lem-Qk-cl1}
\left(e^{\Lambda_{2,k}(z)} - 1 \right) \left(e^{\overline{\Lambda_{1,k}(z)}} - 1 \right) \left(e^{2i \Im (\Lambda_{1,k}(z)) } - 1 \right) \\[6pt]
= - e^{\overline{\Lambda_{2, k}(z)}} - e^{\Lambda_{1, k}(z)} +  e^{\Lambda_{1,k}(z) + \overline{\Lambda_{2,k}(z)}} 
- e^{\Lambda_{2,k}(z) + \overline{\Lambda_{1,k}(z)}} + e^{\Lambda_{2,k}(z)} + e^{\overline{\Lambda_{1,k}(z)}}. 
\end{multline}
Consequently,  
\begin{align} \label{lem-Qk-cl2} 
 Q_k(z) = - 2 \left(e^{\Re (\Lambda_{1,k}(z))} + e^{-\Re (\Lambda_{1,k}(z))} - 2 \cos (\Im (\Lambda_{1,k}(z)) ) \right)  \sin (\Im (\Lambda_{1,k}(z))).
\end{align}
\end{lemma}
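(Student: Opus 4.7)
The plan is to prove \eqref{lem-Qk-cl1} by a direct expansion of the triple product, exploiting the relations between $\Lambda_{1,k}(z)$, $\Lambda_{2,k}(z)$ and their conjugates supplied by \Cref{formulaforlambda}. Setting $a = \Re(\Lambda_{1,k}(z))$ and $b = \Im(\Lambda_{1,k}(z))$, I would note that \eqref{Lamdaat0}--\eqref{sqroot} give $\Lambda_{2,k}(z) = -\Lambda_{1,k}(z)$ for every $z \geq 0$, so that $\overline{\Lambda_{2,k}(z)} = -\overline{\Lambda_{1,k}(z)}$ and consequently
\begin{equation*}
2 i \, \Im(\Lambda_{1,k}(z)) = \Lambda_{1,k}(z) - \overline{\Lambda_{1,k}(z)} = \Lambda_{1,k}(z) + \overline{\Lambda_{2,k}(z)}.
\end{equation*}
Expanding $(e^A - 1)(e^B - 1)(e^C - 1)$ with $A = \Lambda_{2,k}(z)$, $B = \overline{\Lambda_{1,k}(z)}$, $C = 2i \Im(\Lambda_{1,k}(z))$ produces eight exponential terms. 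Three elementary telescopings follow from the identity above: $A + B + C = 0$, $A + C = \overline{\Lambda_{2,k}(z)}$, and $B + C = \Lambda_{1,k}(z)$. Substituting these (and writing $e^C = e^{\Lambda_{1,k}(z) + \overline{\Lambda_{2,k}(z)}}$) collapses the expansion to exactly the six-term right-hand side of \eqref{lem-Qk-cl1}.

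For \eqref{lem-Qk-cl2}, I would take imaginary parts term-by-term on the RHS of \eqref{lem-Qk-cl1} using the polar forms
\begin{equation*}
e^{\pm \Lambda_{1,k}(z)} = e^{\pm a}(\cos b \pm i \sin b), \qquad e^{\pm \overline{\Lambda_{1,k}(z)}} = e^{\pm a}(\cos b \mp i \sin b),
\end{equation*}
together with the corresponding formulas for $e^{\pm \Lambda_{2,k}(z)} = e^{\mp a}(\cos b \mp i \sin b)$ and $e^{\pm 2 i b} = \cos(2b) \pm i \sin(2b)$. The six imaginary contributions are $-e^{-a}\sin b$, $-e^{a}\sin b$, $\sin(2b)$, $\sin(2b)$, $-e^{-a}\sin b$, $-e^{a}\sin b$; summing and using $\sin(2b) = 2 \sin b \cos b$ yields $Q_k(z) = -2\sin b\, (e^{a} + e^{-a} - 2\cos b)$, which is exactly \eqref{lem-Qk-cl2}.

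This argument is essentially a bookkeeping computation, so no real obstacle arises. The only point that requires attention is the observation that $\Lambda_{2,k}(z) = -\Lambda_{1,k}(z)$ holds on the whole range $z \geq 0$ (including $z=0$, by \eqref{Lamdaat0}); this is what makes the exponent identities $A+B+C=0$, $A+C = \overline{\Lambda_{2,k}(z)}$ and $B+C = \Lambda_{1,k}(z)$ available and converts the triple product into the particular six-term form demanded by \eqref{lem-Qk-cl1}.
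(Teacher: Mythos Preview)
Your proposal is correct and follows essentially the same approach as the paper: expand the triple product, use the exponent identities $A+B+C=0$, $A+C=\overline{\Lambda_{2,k}(z)}$, $B+C=\Lambda_{1,k}(z)$, and $C=\Lambda_{1,k}(z)+\overline{\Lambda_{2,k}(z)}$ (all consequences of $\Lambda_{2,k}(z)=-\Lambda_{1,k}(z)$ from \Cref{formulaforlambda}) to obtain \eqref{lem-Qk-cl1}, then take imaginary parts term-by-term to reach \eqref{lem-Qk-cl2}. The only cosmetic difference is that the paper first regroups the imaginary part as $Q_k(z)=-2\,\Im\big(e^{\Lambda_{1,k}(z)}-e^{-\Lambda_{1,k}(z)}+e^{-2i\Im(\Lambda_{1,k}(z))}\big)$ before evaluating, whereas you compute the six contributions directly; the content is the same.
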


\begin{proof} We have 
\begin{multline}\label{Qk-p1}
\left(e^{\Lambda_{2,k}(z)} - 1 \right) \left(e^{\overline{\Lambda_{1,k}(z)}} - 1 \right) \left(e^{2i \Im (\Lambda_{1,k}(z)) } - 1 \right) \\[6pt]
= e^{\Lambda_{2,k}(z) + \overline{\Lambda_{1,k}(z)} + 2i \Im (\Lambda_{1,k}(z)) } - e^{\Lambda_{2,k}(z) + 2i \Im (\Lambda_{1,k}(z)) } - e^{\overline{\Lambda_{1,k}(z)} + 2i \Im (\Lambda_{1,k}(z)) }  + e^{2i \Im (\Lambda_{1,k}(z)) } \\[6pt]
- \left( e^{\Lambda_{2,k}(z) + \overline{\Lambda_{1,k}(z)}} - e^{\Lambda_{2,k}(z)} - e^{\overline{\Lambda_{1,k}(z)}}  + 1  \right).
\end{multline}
Since, by \Cref{formulaforlambda}, 
$$
\Lambda_{2,k}(z) + \overline{\Lambda_{1,k}(z)} + 2i \Im (\Lambda_{1,k}(z)) =0, \quad \Lambda_{2,k}(z) + 2i \Im (\Lambda_{1,k}(z)) = \overline{\Lambda_{2, k}(z)},
$$ 
$$
\overline{\Lambda_{1,k}(z)} + 2i \Im (\Lambda_{1,k}(z)) = \Lambda_{1, k}(z), \quad 2i \Im (\Lambda_{1,k}(z))  = \Lambda_{1,k}(z) + \overline{\Lambda_{2,k}(z)},
$$
assertion \eqref{lem-Qk-cl1} follows from \eqref{Qk-p1}. 

It follows from \eqref{lem-Qk-cl1} and \Cref{formulaforlambda} that 
\begin{align} 
 Q_k(z) = - 2 \Im \left(e^{\Lambda_{1,k}(z)} - e^{- \Lambda_{1,k}(z)} + e^{-2 i \Im(\Lambda_{1,k}(z))} \right),
\end{align}
which implies \eqref{lem-Qk-cl2}. 
The proof is complete. 
\end{proof}

\subsection{Proof of Proposition \ref{prop-Theta}} \label{sect-prop-Theta} Let $k \in \mN^*$ be even and let $z \geq 0$. Applying \Cref{lem-Theta-I}, \Cref{lem-I1I2}, \Cref{lem-I3}, and \Cref{lem-I4I5}, we have 
    \begin{align*}	
        \Theta_k(z) &= 2I_1(z) + I_3(z) + 2I_4(z) = - \frac{2}{z^2 + \frac{(k \pi)^4}{4}} \left(\widetilde{\mathcal{A}}(z) + \widetilde{\mathcal{B}}(z) \right),
    \end{align*}  
    where
    \begin{align*}
    \widetilde{\mathcal{A}}(z) = \left(- \sqrt{z^2 + \frac{(k \pi)^4}{4} } \;\Re(\Lambda_{1,k}(z)) + (k \pi)^2\Re (\Lambda_{1,k}(z)) + 2z \Im (\Lambda_{1,k}(z)) \right) P_k(z),
    \end{align*}
    and
    \begin{align*}
    \widetilde{\mathcal{B}}(z) = \left(\sqrt{z^2 + \frac{(k \pi)^4}{4} } \; \Im (\Lambda_{1,k}(z)) - 2 z\Re(\Lambda_{1,k}(z)) + (k \pi)^2 \Im (\Lambda_{1,k}(z)) \right) Q_k(z).
    \end{align*}
    
    It remains to show that $\widetilde{\mathcal{A}}(z) = \mathcal{A}(z)$ and $ \widetilde{\mathcal{B}}(z) =  \mathcal{B}(z)$. Indeed, we have, by \Cref{formulaforlambda},  
   \begin{multline*}
        \widetilde{\mathcal{A}}(z) \mathop{=}^{\eqref{rexim}}  \left(- \sqrt{z^2 + \frac{(k \pi)^4}{4}} + (k \pi)^2  + 4 \left(\Im (\Lambda_{1,k}(z)) \right)^2 \right) P_k(z) \;\Re (\Lambda_{1,k}(z))\\
        \mathop{=}^{\eqref{reim2}} \left(2(k \pi)^2 + \sqrt{z^2 + \frac{(k \pi)^4}{4}}\right) P_k(z) \;\Re (\Lambda_{1,k}(z)) = \mathcal{A}(z),
    \end{multline*}
    and 
    \begin{multline*}
        \widetilde{\mathcal{B}}(z)  \mathop{=}^{\eqref{rexim}}  \left( \sqrt{z^2 + \frac{(k \pi)^4}{4}} - 4 (\Re (\Lambda_{1,k}(z)))^2 + (k \pi)^2 \right) Q_k(z) \; \Im (\Lambda_{1,k}(z))  \\
        \mathop{=}^{\eqref{reim2}} \left(2(k \pi)^2 - \sqrt{z^2 + \frac{(k \pi)^4}{4}} \right) Q_k(z) \; \Im (\Lambda_{1,k}(z)) = \mathcal{B}(z).
    \end{multline*}
The proof of \Cref{prop-Theta} is complete.
\qed

\subsection{Proof of Proposition \ref{as-behavior}} \label{sect-as-behavior}

This section, which consists of four subsections, is devoted to the proof of \Cref{as-behavior}.
\subsubsection{Proof of Assertion 1)} \label{sect-as-behavior-p1}
It follows from \eqref{sqroot} that
\begin{align}	\label{L-z=CLz}
    \Lambda_{i, k} \left(-z \right) = \overline{\Lambda_{i, k}\left(z \right)}   \mbox{ for } z \in (0, +\infty) \text{ and } i \in \{1, 2\}. 
\end{align}
Combining \eqref{L-z=CLz} with \eqref{Omegakformula} and \eqref{Thetakformula}, we deduce that $\Omega_k$ is even. It also follows from \eqref{Lamdaat0} and \eqref{sqroot} that $\Lambda_{1,k}$ and $\Lambda_{2,k}$ are continuous on $\mR \setminus \{0\}$ and right continuous at $0$. In view of  \eqref{Omegakformula} and \eqref{Thetakformula}, this implies that $\Omega_k$ is continuous on $\mR \setminus \{0\}$ and right continuous at $0$. Since $\Omega_k$ is even and right continuous at $0$, $\Omega_k$ is continuous at $0$. Thus, $\Omega_k$ is continuous on $\mR$.\qed

\subsubsection{Proof of Assertion 2)} \label{sect-as-behavior-p2}
 By \eqref{Omegakformula} and \Cref{prop-Theta}, for $z > 0$, 
\begin{align}
   \Omega_k(z)  = \frac{-2 \left(\mathcal{A}(z) + \mathcal{B}(z) \right)}{\left(z^2 + \frac{(k \pi)^4}{4} \right)^2 \left|e^{\Lambda_{1,k}(z)} - e^{\Lambda_{2,k}(z)} \right|^2},	\label{asym1}
\end{align}
where $\mathcal{A}(z)$ and $\mathcal{B}(z)$ are given in \eqref{A(z)} and \eqref{B(z)}, respectively. 

We have, for large positive $z$, 
\begin{align*}
	\mathcal{A}(z) &=  \left(2(k \pi)^2 + \sqrt{z^2 + \frac{(k \pi)^4}{4} }\right) \left|e^{\Lambda_{2,k}(z)} - 1 \right|^2 \left(e^{2\Re (\Lambda_{1,k}(z))} - 1 \right) \Re (\Lambda_{1,k}(z)) \\[6pt]
	& \mathop{=}^{\eqref{behavereim}}  \left(z + O(1) \right) \left(1 + O \left(e^{-\Re (\Lambda_{1,k}(z))} \right) \right) \left(e^{2\Re (\Lambda_{1,k}(z))} - 1 \right) \left(\sqrt{\frac{z}{2}} + O \left(z^{-\frac{1}{2}} \right) \right) \\[6pt]
	&= \frac{1}{\sqrt{2}} e^{2\Re (\Lambda_{1,k}(z))} z^{\frac{3}{2}} + O\left(e^{2\Re (\Lambda_{1,k}(z))} z^{\frac{1}{2}} \right), 
\end{align*}
and 
\begin{align*}
	\mathcal{B}(z) &= \left(2(k \pi)^2 - \sqrt{z^2 + \frac{(k \pi)^4}{4} } \right) \Im \left\{ \left(e^{\Lambda_{2,k}(z)} - 1 \right) \left( e^{\overline{\Lambda_{1,k}(z)}} - 1 \right) \left(e^{2 i \Im (\Lambda_{1,k}(z))} - 1 \right) \right\} \Im (\Lambda_{1,k}(z)) \\[6pt]
	& \mathop{=}^{\eqref{behavereim}}  O(z) \; O\left(e^{\Re(\Lambda_{1,k}(z))} \right) \; O \left(z^{\frac{1}{2}} \right)  = O \left(e^{\Re (\Lambda_{1,k}(z))} z^{\frac{3}{2}} \right).
\end{align*}
Hence,
\begin{align}
	\mathcal{A}(z) + \mathcal{B}(z) &=  \frac{1}{\sqrt{2}} e^{2\Re (\Lambda_{1,k}(z))} z^{\frac{3}{2}} + O\left(e^{2\Re (\Lambda_{1,k}(z))} z^{\frac{1}{2}} \right).	\label{asym2}
\end{align} 
On the other hand, 
\begin{align}
\frac{1}{\left(z^2 + \frac{(k \pi)^4}{4} \right)^2 \left|e^{\Lambda_{1,k}(z)} - e^{\Lambda_{2,k}(z)} \right|^2} & \mathop{=}^{\eqref{behavereim}}  \left(\frac{1}{z^4} + O \left(z^{-6} \right) \right) \left(\frac{1}{e^{2\Re (\Lambda_{1,k}(z))}} + O \left(\frac{1}{e^{4\Re(\Lambda_{1,k}(z))}} \right) \right) \nonumber \\
&= \frac{1}{z^4 e^{2\Re (\Lambda_{1,k}(z))}} + O \left(\frac{1}{z^6 e^{2\Re (\Lambda_{1,k}(z))}} \right).	\label{asym3}
\end{align}
We deduce from \eqref{asym1}, \eqref{asym2}, and \eqref{asym3} that 
\begin{align*}
	\Omega_k(z) &= -2 \left(\frac{1}{\sqrt{2}} e^{2\Re (\Lambda_{1,k}(z))} z^{\frac{3}{2}} + O \left(e^{2\Re (\Lambda_{1,k}(z))} z^{\frac{1}{2}} \right) \right) \left(\frac{1}{z^4 e^{2\Re (\Lambda_{1,k}(z))}} + O \left(\frac{1}{z^6 e^{2\Re (\Lambda_{1,k}(z))}} \right) \right),
\end{align*}
which implies
\begin{align}	\label{Omegak-asym}
	\Omega_k(z) = -\sqrt{2} z^{-\frac{5}{2}} + O(z^{-\frac{7}{2}}).
\end{align}  
Assertion $2)$ follows from \eqref{Omegak-asym} and the evenness of $\Omega_k$.\qed

\subsubsection{Proof of Assertion 3)} \label{sect-as-behavior-p3}
As $\Omega_k$ is even, it suffices to show that $\Omega_k(z) < 0$ for
$z \in \left(\frac{3(k \pi)^2}{2 \sqrt{7}}, +\infty \right)$. In this proof, we assume that 
$$
 z \in \left(\frac{3(k \pi)^2}{2 \sqrt{7}}, +\infty \right). 
$$

By Proposition \ref{prop-Theta}, it suffices to show that 
    \begin{multline}   \label{shownegative}
        - Q_k(z)  \left(2(k \pi)^2 - \sqrt{z^2 + \frac{(k \pi)^4}{4}} \right) \Im (\Lambda_{1,k}(z)) \\
        < P_k(z)  \left( 2(k \pi)^2 + \sqrt{z^2 + \frac{(k \pi)^4}{4} } \right)\Re (\Lambda_{1,k}(z)). 
    \end{multline}

We claim that
    \begin{align}   
         \label{signc}
     \left|2(k \pi)^2 - \sqrt{z^2 + \frac{(k \pi)^4}{4} } \right|   \Im (\Lambda_{1,k}(z))  <  \left( 2(k \pi)^2 + \sqrt{z^2 + \frac{(k \pi)^4}{4} }\right)\Re (\Lambda_{1,k}(z)),
    \end{align}
    and
    \begin{align}	\label{signc2}
    |Q_k(z)| < P_k(z).
    \end{align}

Admitting this claim, we derive \eqref{shownegative} from \eqref{signc} and \eqref{signc2} after noting that $\Im(\Lambda_{1,k}(z)) > 0$.

\medskip

It remains to prove \eqref{signc} and \eqref{signc2}. 

\medskip 
We first deal with \eqref{signc}. Set
\begin{align}	\label{defa}
a_k = \displaystyle \sqrt{z^2 + \frac{(k \pi)^4}{4}}. 
\end{align}
Then the condition $z > \frac{3(k \pi)^2}{2\sqrt{7}}$ implies 
\begin{align}	\label{a > ...}
	a_k  > \frac{2(k \pi)^2}{\sqrt{7}}.
\end{align}
Since 
$$
(2 -t)^2 (t/2 + 1/4) < (2 + t)^2 (t/2 - 1/4) \mbox{ for } t > 2/ \sqrt{7}, 
$$
it follows from \eqref{a > ...} that
\begin{align}\label{signc-1}
	 \left( 2(k \pi)^2 - a_k \right)^2 \left(\frac{a_k}{2} + \frac{(k \pi)^2}{4} \right) <  \left(  2(k \pi)^2 + a_k \right)^2 \left( \frac{a_k}{2} - \frac{(k \pi)^2}{4} \right).
\end{align}
Assertion \eqref{signc} now follows from \eqref{signc-1} and \Cref{formulaforlambda}. 

\medskip    
    
We next establish \eqref{signc2}.  We have, by \Cref{lem-Qk}, 
\begin{align} 
 Q_k(z)  = - 2 \left(e^{\Re (\Lambda_{1,k}(z))} + e^{-\Re (\Lambda_{1,k}(z))} - 2 \cos (\Im (\Lambda_{1,k}(z)) ) \right)  \sin (\Im (\Lambda_{1,k}(z))).	\label{Qk2form}
\end{align}
Using \eqref{Qk2form} and noting that $e^{-\Re(\Lambda_{1,k}(z))} \leq 1$, we get
    \begin{align}	\label{sign-case2-1}
         |Q_k(z)|   \leq 2 \left(e^{\Re (\Lambda_{1,k}(z))} + 3 \right).
    \end{align}
   We have 
    \begin{multline}
       P_k(z) \mathop{\geq}^{\Cref{lem-Pk}} e^{2\Re (\Lambda_{1,k}(z))}  - e^{-2\Re (\Lambda_{1,k}(z))} - 2 e^{ \Re (\Lambda_{1,k}(z))}  + 2 e^{-\Re (\Lambda_{1,k}(z))} \\
          \mathop{>}^{\Re (\Lambda_{1, k}(z)) > 0} e^{2\Re (\Lambda_{1,k}(z))} - 2 e^{ \Re (\Lambda_{1,k}(z))} - 1.		\label{sign-case2-2}
    \end{multline}
    By \eqref{sign-case2-1} and \eqref{sign-case2-2}, it suffices to show the following inequality to deduce \eqref{signc2}:
  \begin{align}\label{sign-case2-4}		
        2 \left(e^{\Re (\Lambda_{1,k}(z))} + 3 \right) \leq e^{2\Re (\Lambda_{1,k}(z))} - 2 e^{\Re (\Lambda_{1,k}(z))} - 1. 
    \end{align}    
  To this end, set $t = e^{\Re(\Lambda_{1,k}(z))}$. Then \eqref{sign-case2-4} becomes
\begin{align*}
	2(t + 3) \leq t^2 - 2t - 1, \quad \text{ i.e., } \quad  t^2 - 4t - 7 \geq 0,
\end{align*}    
which is equivalent to (since $t > 0$)
\begin{align}	\label{sign-case2-3}
t \geq 2 + \sqrt{11}.
\end{align}
This indeed holds since 
\begin{align*}
       \Re (\Lambda_{1,k}(z)) \mathop{=}^{\eqref{sqroot}} \sqrt{\frac{a_k}{2} -\frac{(k \pi)^2}{4}  } \mathop{>}^{\eqref{a > ...}} k \pi \sqrt{\frac{1}{\sqrt{7}} - \frac{1}{4}} \geq 2 \pi \sqrt{\frac{1}{\sqrt{7}} - \frac{1}{4}} > \ln (2 + \sqrt{11}). 
    \end{align*}
    
    The proof is complete.  \qed
  
\subsubsection{Proof of Assertion 4)}
By Assertion $3)$ and the evenness of $\Omega_{k_0}$, it suffices to show that
\be \label{assertion4-p1}
\Omega_{k_0}(z) < 0 \mbox{ for } k_0 \in \{2, 10\} \text{ and } z \in \left[0, \frac{3(k_0 \pi)^2}{2 \sqrt{7}}\right]. 
\ee
Since $P_{k_0}(z)\geq 0$, $\Re (\Lambda_{1,k_0}(z)) \geq 0$, and $\Im(\Lambda_{1, k_0}(z)) > 0$, assertion \eqref{assertion4-p1} follows from the following fact that we will prove below:
\begin{align}
 \label{shownegative-2}
         Q_{k_0}(z)  \left(2(k_0 \pi)^2 - \sqrt{z^2 + \frac{(k_0 \pi)^4}{4}} \right)  > 0.
\end{align} 

We claim that
\begin{align}
 e^{\Re (\Lambda_{1,k_0}(z))} + e^{-\Re (\Lambda_{1,k_0}(z))} - 2 \cos (\Im (\Lambda_{1,k_0}(z)) ) > 0, \label{Step1-p1}
\end{align}
 \begin{align}   \label{z < ...}
        2({k_0} \pi)^2 - \sqrt{z^2 + \frac{({k_0} \pi)^4}{4} } > 0,
    \end{align} 
    and
     \begin{align}   \label{sin<0}
        \sin (\Im (\Lambda_{1,k_0}(z))) < 0. 
    \end{align} 
Admitting this claim, we obtain \eqref{shownegative-2} from \Cref{lem-Qk}. 

In the rest of the proof, we establish \eqref{Step1-p1}, \eqref{z < ...}, and \eqref{sin<0}. We begin with \eqref{Step1-p1}. We have
\begin{multline}\label{Step1-p1-p1}
e^{\Re(\Lambda_{1,k_0}(z))} + e^{- \Re(\Lambda_{1,k_0}(z))} - 2 \cos (\Im (\Lambda_{1,k_0}(z)) )  \\
  = e^{-\Re(\Lambda_{1,k_0}(z))} \left(e^{\Re(\Lambda_{1,k_0}(z))} - 1 \right)^2 + 2 \left[ 1 -  \cos(\Im (\Lambda_{1,k_0}(z))) \right].
\end{multline}
Since $1 - \cos(\Im(\Lambda_{1,k_0}(0))) = 1 - \cos(\frac{{k_0} \pi}{\sqrt{2}}) > 0$ and $e^{\Re(\Lambda_{1,k_0}(z))} - 1 > 0$ for $z > 0$, we obtain \eqref{Step1-p1} from \eqref{Step1-p1-p1}.

Set
\begin{align}
a = a_{k_0} = \sqrt{z^2 + \frac{(k_0 \pi)^4}{4}}.
\end{align}
Since $ 0  \leq z \leq \frac{3({k_0} \pi)^2}{2 \sqrt{7}}$, it follows that
\begin{align}	\label{rangea}
			 	\frac{({k_0} \pi)^2}{2} \leq a \leq \frac{2({k_0} \pi)^2}{\sqrt{7}}, 
\end{align}
which yields \eqref{z < ...}.   

It remains to prove \eqref{sin<0}. We have, 
    \begin{align}	\label{rangeImLambda}
        \Im (\Lambda_{1,k_0}(z)) \mathop{=}^{\Cref{formulaforlambda}} \sqrt{\frac{a}{2} + \frac{({k_0} \pi)^2}{4} } \mathop{\in}^{\eqref{rangea}} \left[\frac{{k_0} \pi}{\sqrt{2}}, {k_0} \pi \sqrt{\frac{1}{4} + \frac{1}{\sqrt{7}}} \right].
    \end{align}
    When ${k_0} = 2$, \eqref{rangeImLambda} implies
\begin{align*}
	\pi < \sqrt{2} \pi \leq \Im(\Lambda_{1,2}(z)) \leq 2 \pi \sqrt{\frac{1}{4} + \frac{1}{\sqrt{7}}} < 2 \pi.
\end{align*}        
      When ${k_0} = 10$, \eqref{rangeImLambda} implies
        \begin{align*}
        7 \pi < \frac{10}{\sqrt{2}} \pi \leq \Im(\Lambda_{1,10}(z)) \leq 10 \pi \sqrt{\frac{1}{4} + \frac{1}{\sqrt{7}}} < 8 \pi.
        \end{align*}
In both cases, \eqref{sin<0} is satisfied. 

The proof is complete. \qed

\section{A quadratic obstruction for the local controllability - Proof of (\ref{key-1+2})} \label{sect-obstruction}

In this section, we prove the following result which is the key ingredient in the proof of \Cref{Mainresult}.

\begin{proposition} \label{2obstruction}
Let $T > 0$. Let $u_1 \in L^1(\mR)$ be real-valued and let $$y_1 \in C([0, +\infty), L^2(0, 1)) \cap L^2_{loc}((0, +\infty), H^1(0, 1))$$ be the unique solution of \eqref{our1storder-***}. 
Suppose that 
$$
\mathrm{supp} \; u_1 \subset [0, T] \quad \mbox{ and } \quad  y_1(T, \cdot) = 0.
$$ 
Then there exists a positive constant $C$ depending only on $T$ such that, for $k_0 \in \{2, 10\}$,
\begin{align}	\label{key1+2}
 \int_0^T \int_0^1 y_1^2(t, x) \phi_{k_0, x}(t, x) dx dt \leq  - C \|u_1\|^2_{[H^{\frac{5}{4}}(0, T)]^*}, 
\end{align}
where $\phi_{k_0}$ is given in \eqref{def-phik}.
\end{proposition}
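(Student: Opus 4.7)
The plan is to chain together three ingredients: the Parseval-type formula of \Cref{lem-key-1}, the pointwise lower bound on $-\Omega_{k_0}$ coming from \Cref{as-behavior}, and a norm-equivalence argument identifying the resulting weighted $L^2$ quantity with $\|u_1\|_{[H^{5/4}(0,T)]^*}^2$. First, applying \Cref{lem-key-1} to $k=k_0$ and using $\Phi_{k_0}(z+i(k_0\pi)^2/2)=\Omega_{k_0}(z)$, we get
\begin{equation*}
\int_0^T \int_0^1 y_1^2(t,x) \phi_{k_0, x}(t,x)\,dx\,dt = k_0 \pi \int_{\mathbb{R}} \left|\hu_1\!\left(z+i\tfrac{(k_0\pi)^2}{2}\right)\right|^2 \Omega_{k_0}(z)\,dz.
\end{equation*}
By \Cref{as-behavior}, $\Omega_{k_0}$ is continuous on $\mR$, strictly negative everywhere, and satisfies $\Omega_{k_0}(z)=-\sqrt{2}|z|^{-5/2}+O(|z|^{-7/2})$ as $|z|\to\infty$. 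A routine compactness-plus-asymptotics argument (compactness gives a strictly positive minimum of $-\Omega_{k_0}$ on any bounded interval, and the asymptotics dominate $(1+|z|^2)^{-5/4}$ for large $|z|$) yields a constant $c_0=c_0(k_0)>0$ with $-\Omega_{k_0}(z)\ge c_0(1+|z|^2)^{-5/4}$ for all $z\in\mR$. Plugging this in gives
\begin{equation*}
\int_0^T \int_0^1 y_1^2 \phi_{k_0, x}\,dx\,dt \le - c_0 k_0 \pi \int_{\mathbb{R}} \left|\hu_1\!\left(z+i\tfrac{(k_0\pi)^2}{2}\right)\right|^2 (1+|z|^2)^{-5/4} dz.
\end{equation*}

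Next, I would rewrite the shifted Fourier transform as an ordinary one. Setting $\alpha_0=(k_0\pi)^2/2$ and $v(t)=e^{\alpha_0 t}u_1(t)\mathbf{1}_{[0,T]}(t)$, the support assumption on $u_1$ lets one check directly that $\hu_1(z+i\alpha_0)=\hat v(z)$ for $z\in\mR$. By Parseval,
\begin{equation*}
\int_{\mR}\left|\hu_1(z+i\alpha_0)\right|^2(1+|z|^2)^{-5/4}\,dz = \int_{\mR}|\hat v(z)|^2(1+|z|^2)^{-5/4}\,dz = \|v\|_{H^{-5/4}(\mR)}^2,
\end{equation*}
with the standard Fourier definition of the $H^{-5/4}(\mR)$ norm.

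Finally, I would upgrade $\|v\|_{H^{-5/4}(\mR)}^2$ to $\|u_1\|_{[H^{5/4}(0,T)]^*}^2$ by two norm equivalences, whose constants depend only on $T$. First, since every $\phi\in H^{5/4}(0,T)$ admits an extension $\tilde\phi\in H^{5/4}(\mR)$ with $\|\tilde\phi\|_{H^{5/4}(\mR)}\le C_T\|\phi\|_{H^{5/4}(0,T)}$, testing $v$ (supported in $[0,T]$) against $\tilde\phi$ and comparing with testing against $\phi$ directly yields $\|v\|_{[H^{5/4}(0,T)]^*}\le C_T \|v\|_{H^{-5/4}(\mR)}$. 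Second, multiplication by the smooth positive weight $e^{\pm\alpha_0 t}$ is an isomorphism of $H^{5/4}(0,T)$ (with constants depending on $T$ and $k_0$), hence of its dual, and therefore $\|v\|_{[H^{5/4}(0,T)]^*}\ge c_T\|u_1\|_{[H^{5/4}(0,T)]^*}$. Combining all of the above gives \eqref{key1+2} with a constant depending only on $T$. The main (and only real) obstacle is handling these two norm equivalences cleanly, since the paper's dual norm is defined on the bounded interval while Parseval naturally produces an $H^{-5/4}(\mR)$ quantity; the extension-operator and multiplier arguments above resolve this, with the cost that all constants pick up dependence on $T$ (and on the fixed integer $k_0\in\{2,10\}$).
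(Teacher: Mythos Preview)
Your proposal is correct and follows essentially the same approach as the paper: apply \Cref{lem-key-1} together with the sign and decay of $\Omega_{k_0}$ from \Cref{as-behavior}, rewrite the shifted Fourier integral as $\|v\|_{H^{-5/4}(\mR)}^2$ with $v(t)=e^{(k_0\pi)^2 t/2}u_1(t)$, and then pass to $\|u_1\|_{[H^{5/4}(0,T)]^*}^2$ by a multiplier/duality argument. The only minor difference is in this last step: the paper fixes a cutoff $\chi\in C_c^\infty(\mR)$ equal to $1$ on $[0,T]$, writes $\int u_1\varphi=\int v\,\chi e^{-(k_0\pi)^2 t/2}\varphi$, and uses that $H^{5/4}(\mR)$ is an algebra to bound $\|\chi e^{-(k_0\pi)^2 t/2}\varphi\|_{H^{5/4}(\mR)}$, thereby obtaining $\|u_1\|_{H^{-5/4}(\mR)}\le C\|v\|_{H^{-5/4}(\mR)}$ directly on the line; you instead invoke that multiplication by $e^{\pm\alpha_0 t}$ is an isomorphism of $H^{5/4}(0,T)$ and dualize on the interval. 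Both routes are standard and equivalent in difficulty.
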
 

\begin{proof} Let $k_0 \in \{2,10\}$.
By \Cref{lem-key-1} and \Cref{as-behavior},
    \begin{align}   \label{<=Ck}
     \int_0^T \int_0^1 y_1^2(t, x) \phi_{k_0, x}(t, x) dx dt &\leq - C  \int_{\mathbb{R}} \frac{\left|\hu_1\left(z + i \frac{({k_0} \pi)^2}{2} \right) \right|^2}{\left(1 + |z|^2 \right)^{\frac{5}{4}}} dz. 
    \end{align}
Here and throughout the proof, $C$ denotes a positive constant that depends only on $T$, and the value of $C$ varies line by line.  
   
 Consider the function  $v: \mathbb{R} \to \mathbb{R}$ defined by 
        \begin{align}	\label{def-v}
 v(t) = e^{\frac{1}{2} k_0^2 \pi^2 t}  u_1(t)  \quad \mbox{ for } t \in \mR. 
        \end{align}
    Then 
    \begin{align*}
    		\hat{v}(z) = \hu_1 \left(z + \frac{1}{2} i k_0^2 \pi^2 \right) \quad \forall z \in \mC.
    \end{align*} 
 It follows from \eqref{<=Ck}  that 
\begin{align}   \label{vandu}
        \int_0^T \int_0^1 y_1^2(t, x) \phi_{k_0, x}(t, x) dx dt  \leq -C \|v\|^2_{H^{-\frac{5}{4}}(\mR)}.
    \end{align}
Since $u_1 \in L^1(\mR)$ with $\mathrm{supp} \; u_1 \subset [0,T]$, we have 
\begin{align}	\label{-5/4norms}
	\| u_1\|_{[H^{\frac{5}{4}}(0, T)]^*} \leq C \|u_1\|_{H^{-\frac{5}{4}}(\mR)}.
\end{align}

We claim that
\begin{align}	\label{usimv-1}
\|u_1 \|_{H^{-\frac{5}{4}}(\mathbb{R})} \leq C \|v\|_{H^{-\frac{5}{4}}(\mathbb{R})}.
\end{align}
Admitting this claim, we obtain \eqref{key1+2} from \eqref{vandu}, \eqref{-5/4norms}, and \eqref{usimv-1}.

In the rest of the proof,  we establish \eqref{usimv-1}. It suffices to show that for every $\varphi \in C_c^\infty(\mR)$,
\begin{align} \label{obstruction2-p0}
	\left| \int_{\mR} u_1(t) \varphi(t) dt \right| \leq C \|v \|_{H^{-\frac{5}{4}}(\mR)} \|\varphi \|_{H^{\frac{5}{4}}(\mR)}.
\end{align}
To this end, fix $\chi \in C^{\infty}_c(\mathbb{R})$ such that $\mathrm{supp} \; \chi \subset [-T, 2T]$ and $\chi = 1$ in $[0, T]$. Since $\mathrm{ supp } \; v \subset [0, T]$ and $\chi = 1$ in $[0, T]$, it follows from \eqref{def-v} that 
\begin{align*}
 \int_\mR u_1(t) \varphi(t)  dt = \int_\mR v(t) e^{-\frac{1}{2} k_0^2 \pi^2 t}   \varphi(t)  dt  
 = \int_\mR v(t) \chi(t) e^{-\frac{1}{2} k_0^2 \pi^2 t}   \varphi(t)  dt.
\end{align*}
We derive that 
\begin{align}	\label{2obstruction-p2}
	\left|\int_{\mathbb{R}} u_1(t) \varphi(t) dt \right| \leq  \|v\|_{H^{-\frac{5}{4}}(\mR)}\|\chi e^{-\frac{1}{2} k_0^2 \pi^2 \cdot} \varphi \|_{H^{\frac{5}{4}}(\mR)}.
\end{align}
Since $H^{\frac{5}{4}}(\mR)$ is an algebra, it follows that 
\be \label{2obstruction-p3}
 \|\chi e^{-\frac{1}{2} k_0^2 \pi^2 \cdot} \varphi \|_{H^{\frac{5}{4}}(\mR)} \le C \|\chi e^{-\frac{1}{2} k_0^2 \pi^2 \cdot} \|_{H^{\frac{5}{4}}(\mR)}\|\varphi \|_{H^{\frac{5}{4}}(\mR)}  \le C  \|\varphi \|_{H^{\frac{5}{4}}(\mR)}.
\ee
Estimate \eqref{obstruction2-p0} is a consequence of \eqref{2obstruction-p2} and \eqref{2obstruction-p3}. 

\medskip 
The proof of \Cref{2obstruction} is complete.
\end{proof}

\begin{remark} \rm One can also prove that  
\begin{align}
  \|v\|_{H^{-\frac{5}{4}}(\mathbb{R})} \leq C \|u_1 \|_{H^{-\frac{5}{4}}(\mathbb{R})}. 
\end{align}
In other words, the $H^{-\frac{5}{4}}(\mR)$ norms of $v$ and $u_1$ are equivalent. 
\end{remark}

\section{Estimates for the linearized system}	\label{subsec-2ineq}
In this section, we establish estimates for the linearized system of \eqref{ViscousBurgers1D} which is 
\begin{align}
	\label{y1y2}
            \begin{cases}
                y_{t}(t, x) - y_{xx} (t, x) = u(t) \quad &\text{ in } (0, T) \times (0, 1), \\[6pt]
                y(t, 0) = y(t, 1) = 0 &\text{ in } (0, T), \\[6pt]
                y(0, x) = 0 &\text{ in } (0, 1).
            \end{cases}
\end{align}
Set, for $ u \in L^1(0, T)$, 
\begin{align} \label{U'=u}
	U(t) = \int_0^t u(s) ds \quad \text{ for } t \in [0, T].
\end{align}

Here is the main result of this section. 

\begin{proposition}	\label{pro-y1} Let $T > 0$, $u \in L^1(0, T)$, and let $y \in Y_T$ be the unique solution of \eqref{y1y2}. There exists a positive constant $C$ depending only on $T$ such that
\begin{align}	\label{E-1-y1-1}
	\|y \|_{L^2((0, T), H^1(0, 1))} \leq C \|u\|_{[H^{\frac{3}{4}}(0, T)]^*}, 
\end{align}
and
\begin{align}
           \|y - U\|_{L^2((0, T) \times (0, 1))} \leq C \|u\|_{[H^{\frac{5}{4}}(0, T)]^*},  \label{E-y1-2-1}  
\end{align}
where $U$ is defined by \eqref{U'=u}. As a consequence of \eqref{E-y1-2-1},
\begin{align}
	\|y\|_{L^2((0, T) \times (0, 1))} \leq C \|u\|_{[H^1(0, T)]^*}.
\end{align}
\end{proposition}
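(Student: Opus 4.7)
The approach is a spectral decomposition of $y$ in the orthonormal basis $\{e_k\}_{k \ge 1}$ of $L^2(0,1)$, combined with Fourier analysis in the time variable and a weighted kernel bound that is exactly the role of the auxiliary \Cref{lem-spectral}. Writing $y(t,x) = \sum_k y_k(t) e_k(x)$ and testing \eqref{y1y2} against $e_k$ produces the mode ODE $y_k' + (k\pi)^2 y_k = c_k u$, $y_k(0) = 0$, with $c_k = \int_0^1 e_k(x)\, dx = \sqrt{2}(1-\cos(k\pi))/(k\pi)$. Only odd modes survive; for odd $k$, $c_k = 2\sqrt{2}/(k\pi)$, so $(1+(k\pi)^2) c_k^2$ is uniformly bounded and $\sum_k c_k e_k = 1$ in $L^2(0,1)$. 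Extending $u$ by zero outside $[0,T]$, Duhamel gives $y_k = c_k(K_k * u)$ with $K_k(\tau) = e^{-(k\pi)^2 \tau}\mathbf{1}_{\tau \ge 0}$; in Fourier variables $\hat y_k(z) = c_k \hat u(z)/(iz + (k\pi)^2)$, so Parseval yields $\|y_k\|_{L^2(\mR)}^2 = c_k^2 \int_{\mR} |\hat u(z)|^2 (z^2 + (k\pi)^4)^{-1}\, dz$.

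For \eqref{E-1-y1-1}, I would bound $\|y\|_{L^2((0,T), H^1)}^2 \le \sum_k (1+(k\pi)^2)\|y_k\|_{L^2(\mR)}^2$ and swap sum and integral by Fubini to get
\[
\|y\|_{L^2((0,T), H^1)}^2 \le \int_{\mR} |\hat u(z)|^2 \sum_{k \text{ odd}} \frac{(1+(k\pi)^2) c_k^2}{z^2 + (k\pi)^4}\, dz.
\]
The spectral sum is $O((1+|z|)^{-3/2})$ by an integral-comparison estimate (substituting $k\pi = |z|^{1/2}s$ near $|z| \to \infty$), which is the analytic content of \Cref{lem-spectral}. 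Hence the right-hand side is $\le C\|u\|_{H^{-3/4}(\mR)}^2$, and since $u$ is supported in $[0,T]$, duality of the restriction $H^{3/4}(\mR) \to H^{3/4}(0,T)$ gives $\|u\|_{H^{-3/4}(\mR)} \le C\|u\|_{[H^{3/4}(0,T)]^*}$.

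For \eqref{E-y1-2-1}, the identity $\sum_k c_k e_k = 1$ lets me write $y - U = \sum_k (y_k - c_k U) e_k$. Integration by parts in Duhamel (using $u = U'$ and $U(0) = 0$, with the boundary term at $s=t$ vanishing because $e^{-(k\pi)^2(t-s)} - 1 = 0$ there) produces
\[
y_k(t) - c_k U(t) = - c_k (k\pi)^2 (K_k * U_{\mathrm{ext}})(t) \quad \text{for } t \in [0,T],
\]
where $U_{\mathrm{ext}} := U\mathbf{1}_{[0,T]}$. Parseval and Fubini then yield
\[
\|y - U\|_{L^2((0,T)\times(0,1))}^2 \le \int_{\mR} |\hat U_{\mathrm{ext}}(z)|^2 \sum_{k \text{ odd}} \frac{c_k^2 (k\pi)^4}{z^2 + (k\pi)^4}\, dz,
\]
where the new spectral sum is only $O((1+|z|)^{-1/2})$. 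The distributional identity $U_{\mathrm{ext}}' = u - U(T)\delta_T$ gives $(iz)\hat U_{\mathrm{ext}}(z) = \hat u(z) - (2\pi)^{-1/2} U(T) e^{-izT}$, producing an additional $(1+z^2)^{-1}$ at infinity; near $z = 0$ the cancellation $\hat u(0) = (2\pi)^{-1/2} U(T)$ keeps $\hat U_{\mathrm{ext}}$ bounded. The combined weight becomes $(1+|z|)^{-5/2}$, so $\|y - U\|^2 \le C(\|u\|_{H^{-5/4}(\mR)}^2 + |U(T)|^2) \le C\|u\|_{[H^{5/4}(0,T)]^*}^2$, the last bound using $|U(T)| = |\langle u, 1 \rangle| \le C\|u\|_{[H^{5/4}(0,T)]^*}$ since $1 \in H^{5/4}(0,T)$.

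The last inequality follows from the triangle inequality $\|y\|_{L^2((0,T)\times(0,1))} \le \|y - U\|_{L^2} + \|U\|_{L^2(0,T)}$ combined with $\|U\|_{L^2(0,T)} \le C\|u\|_{[H^1(0,T)]^*}$, which is an easy duality argument: for $\psi \in L^2(0,T)$, $\int_0^T U\psi = \int_0^T u \Psi$ with $\Psi(s) = \int_s^T \psi$ satisfying $\Psi(T) = 0$ and $\|\Psi\|_{H^1(0,T)} \le C\|\psi\|_{L^2}$. The main obstacle is the second estimate: one must simultaneously establish the sharp asymptotics of the weighted spectral sums (the technical engine provided by \Cref{lem-spectral}) and carefully handle the boundary jump $U(T)\delta_T$ inherent in $U_{\mathrm{ext}}$, exploiting cancellation near $z = 0$ together with the pointwise bound $|U(T)| \le C\|u\|_{[H^{5/4}(0,T)]^*}$.
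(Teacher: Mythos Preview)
Your argument is correct and, for \eqref{E-1-y1-1}, is exactly the paper's: spectral decomposition, Duhamel on each mode, Parseval, then the weighted sum $\sum_k k^{2\gamma}/(z^2+(k\pi)^4) = O((1+|z|)^{\gamma-3/2})$ (this is precisely what \Cref{lem-spectral} packages).

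For \eqref{E-y1-2-1} you take a slightly different path. The paper stays with $U$ throughout: after the same integration by parts giving $y_k - c_kU = -c_k(k\pi)^2(K_k * U_{\mathrm{ext}})$, it applies \Cref{lem-spectral} with $\gamma=1$ and $V=U_{\mathrm{ext}}$ to obtain $\|y-U\|_{L^2}\le C\|U\|_{[H^{1/4}(0,T)]^*}$, and then invokes a one-line duality lemma (\Cref{lem-Ueandu}: for $\Phi(t)=\int_t^T\varphi$, $\int U\varphi=\int u\Phi$ and $\|\Phi\|_{H^s}\le C\|\varphi\|_{H^{s-1}}$) to conclude $\|U\|_{[H^{1/4}]^*}\le C\|u\|_{[H^{5/4}]^*}$. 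You instead push $U_{\mathrm{ext}}$ back to $u$ via $U_{\mathrm{ext}}'=u-U(T)\delta_T$ in Fourier. This also works, but it costs you the boundary term $U(T)$ and forces a separate low-frequency argument; your line ``cancellation keeps $\hat U_{\mathrm{ext}}$ bounded'' is true but one still needs a sentence to turn that bound into $C(\|u\|_{H^{-5/4}(\mR)}+|U(T)|)$ (e.g.\ bound $\sup_{|w|\le 1}|\hat u'(w)|$ by pairing $u$ against $te^{-iwt}\in H^{5/4}(0,T)$). The paper's route sidesteps the boundary term entirely and is cleaner; yours is a legitimate variant.
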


\begin{remark} \rm \label{rem-y1} 
The standard estimate for solutions of \eqref{y1y2} is (see, e.g., \Cref{ap-heat-fx})
\begin{align}
	\|y\|_{C([0, T], L^2(0, 1))} + \|y\|_{L^2((0, T), H^1(0, 1))} \leq C \|u\|_{L^1(0, T)}.
\end{align}
Proposition \ref{pro-y1} asserts that  the upper bound for $\|y \|_{L^2((0, T), H^1(0, 1))}$ can be improved from $\|u\|_{L^1(0, T)}$ to $\|u\|_{[H^{\frac{3}{4}}(0, T)]^*}$ (note that $\|u\|_{[H^s(0, T)]^*} \leq C \|u\|_{L^1(0, T)}$ for all $s > \frac{1}{2}$ by the Sobolev embedding of $H^s(0, T)$ into $L^\infty(0, T)$ for $s > \frac{1}{2}$). 

\end{remark}

\begin{remark} \rm Estimate \eqref{E-y1-2-1} was previously obtained by Marbach \cite[Lemma 22]{Marbach18} whose proof is different from ours. It is unclear to us whether his approach can be used to establish \eqref{E-1-y1-1}. 
 \end{remark}

The following lemma plays a key role in the proof of \Cref{pro-y1}.

\begin{lemma} \label{lem-spectral}
Let $\gamma \in (-\frac{1}{2}, \frac{3}{2})$, and let $V \in L^1(\mR)$ be such that $\mathrm{supp} \; V \subset [0, +\infty)$. Set 
\begin{align}	\label{lem-spectral-defak}
a_k (t) =  k^{\gamma} \int_0^t e^{- \pi^2 k^2 (t-s)} V(s) \, ds \quad \mbox{ for } k \in \mN^* \text{ and } t \geq 0.  
\end{align}
There exists a positive constant $C$ depending only on $\gamma$ such that
\begin{align}	\label{lem-spectral-ineq}
\sum_{k  \geq 1} \int_0^{+\infty} |a_k(t)|^2 \, dt \leq C \| V \|_{H^{ \frac{\gamma}{2} - \frac{3}{4} }(\mR)}^2.
\end{align}
\end{lemma}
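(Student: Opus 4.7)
The plan is to recognize each $a_k$ as a convolution of $V$ with a simple exponential kernel, apply Plancherel's identity in time, and then use Tonelli to exchange the sum over $k$ with the frequency integral. This reduces the lemma to a pointwise bound for an explicit series in $k$, which is where all the real work lies.

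Set $h_k(t) = k^\gamma e^{-\pi^2 k^2 t} \mathbf{1}_{[0,+\infty)}(t)$. Since $\mathrm{supp}\, V \subset [0,+\infty)$, definition \eqref{lem-spectral-defak} says precisely that $a_k = V * h_k$, with $a_k$ supported in $[0,+\infty)$. With the Fourier convention of the paper a direct computation gives
\begin{align*}
\widehat{h_k}(z) = \frac{k^\gamma}{\sqrt{2\pi}\,(\pi^2 k^2 + iz)}, \qquad |\widehat{a_k}(z)|^2 = |\widehat{V}(z)|^2 \cdot \frac{k^{2\gamma}}{\pi^4 k^4 + z^2}.
\end{align*}
Since $V \in L^1(\mR)$ and $h_k \in L^1 \cap L^2$, we have $a_k \in L^2(\mR)$, and Plancherel combined with Tonelli yields
\begin{align*}
\sum_{k \geq 1} \int_0^{+\infty} |a_k(t)|^2\, dt = \int_{\mR} |\widehat{V}(z)|^2 \, S(z)\, dz, \qquad S(z) := \sum_{k \geq 1} \frac{k^{2\gamma}}{\pi^4 k^4 + z^2}.
\end{align*}

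The lemma is thereby reduced to the pointwise estimate $S(z) \leq C (1 + z^2)^{\gamma/2 - 3/4}$ for some $C = C(\gamma)$, and this is the main obstacle: both endpoint restrictions on $\gamma$ enter here. For $|z| \leq 1$ the bound follows from convergence of $\sum_k k^{2\gamma - 4}$, which requires $\gamma < 3/2$. For $|z| \geq 1$, I would split the sum at $k_0 \sim \sqrt{|z|}/\pi$. On $\{k \leq k_0\}$, use $\pi^4 k^4 + z^2 \geq z^2$ and $\sum_{k=1}^{k_0} k^{2\gamma} \leq C k_0^{2\gamma + 1}$ (this needs $\gamma > -1/2$), contributing an amount of order $|z|^{\gamma - 3/2}$. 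On $\{k > k_0\}$, use $\pi^4 k^4 + z^2 \geq \pi^4 k^4$ and $\sum_{k > k_0} k^{2\gamma - 4} \leq C k_0^{2\gamma - 3}$ (this needs $\gamma < 3/2$), again of order $|z|^{\gamma - 3/2}$. Combining the two regimes gives $S(z) \leq C |z|^{\gamma - 3/2}$ for $|z| \geq 1$, which is comparable to $(1 + z^2)^{\gamma/2 - 3/4}$, and the lemma follows. Heuristically, $S(z)$ is a Riemann-sum proxy for the integral $\int_0^\infty k^{2\gamma}/(\pi^4 k^4 + z^2)\, dk$, which by the rescaling $k = |z|^{1/2}\tau/\pi$ equals $c_\gamma |z|^{\gamma - 3/2}$ with $c_\gamma = \pi^{-(2\gamma+1)}\int_0^\infty \tau^{2\gamma}/(\tau^4+1)\, d\tau$ finite exactly for $\gamma \in (-1/2, 3/2)$, which explains both the exponent appearing in the conclusion and the hypotheses on $\gamma$.
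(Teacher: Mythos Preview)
Your proof is correct and lands on exactly the same core estimate as the paper: both arguments reduce to the pointwise bound
\[
S(z) = \sum_{k \geq 1} \frac{k^{2\gamma}}{\pi^4 k^4 + z^2} \leq C (1+z^2)^{\frac{\gamma}{2} - \frac{3}{4}},
\]
and both prove it by splitting at $k_0 \sim |z|^{1/2}$, which is precisely the content of the paper's auxiliary Lemma (their ``lem-spectral-key-g''). The only difference is the route to $S(z)$: the paper expands $\int_0^\infty |a_k|^2$ via Fubini in the time variables, sums in $k$ to produce the kernel $g_\gamma(s) = \sum_k k^{2\gamma-2} e^{-\pi^2 k^2 |s|}$, and then applies Parseval to $\int V\,(V*g_\gamma)$; you instead apply Plancherel directly to each $a_k = V*h_k$ and sum afterwards. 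Your route is a shade more direct (it bypasses the introduction of $g_\gamma$ and the separate verification that $g_\gamma \in L^1$), but the two arguments are really the same computation in different orders, and neither gains anything substantive over the other.
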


In the rest of this section, we give the proof of \Cref{lem-spectral} and \Cref{pro-y1} in \Cref{subsec-prooflemspectral} and \Cref{subsec-proofproy1}, respectively.

\subsection{Proof of Lemma \ref{lem-spectral}	}	\label{subsec-prooflemspectral}

We begin this section with the following key ingredient for the proof of \Cref{lem-spectral}.

\begin{lemma}	\label{lem-spectral-key-g}
	Let $\gamma \in (-\frac{1}{2}, \frac{3}{2})$. Set 
	\begin{align*}
		g_{\gamma}(s) = \sum_{k \geq 1} k^{2 \gamma - 2} e^{- \pi^2 k^2 |s|}  \quad \text{ for } s \in \mR.
	\end{align*}
	The following properties hold:
	\begin{enumerate}
	\item[1)] $g_{\gamma} \in L^1(\mR)$.
	\item[2)] For every $\xi \in \mR$, 
\begin{align} \label{lem-spectral-key-g-2}	
	\hat{g}_{\gamma}(\xi) = \sqrt{2}\pi^{\frac{3}{2}} \sum_{k \geq 1} \frac{k^{2 \gamma}}{\pi^4 k^4 + \xi^2}.
\end{align}
	\item[3)] There exists a positive constant $C$ depending only on $\gamma$ such that for every $\xi \in \mR$,
	\begin{align}	\label{lem-spectral-key-g-3}
		|\hat{g}_{\gamma}(\xi)| \leq C \left(1 + |\xi|^2 \right)^{\frac{\gamma}{2} - \frac{3}{4}}.
	\end{align}
	\end{enumerate}
\end{lemma}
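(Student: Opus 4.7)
The plan is to handle the three assertions in order, with the substantive analytic work reserved for (3). For (1), Tonelli's theorem applies since every summand is nonnegative:
\[
\int_{\mathbb{R}} g_\gamma(s)\,ds = 2\sum_{k\geq 1} k^{2\gamma-2}\int_0^\infty e^{-\pi^2 k^2 s}\,ds = \frac{2}{\pi^2}\sum_{k\geq 1} k^{2\gamma-4},
\]
which is finite precisely because $\gamma < 3/2$. This simultaneously shows $g_\gamma$ is finite a.e. and that the series $\sum k^{2\gamma-2} e^{-\pi^2 k^2|\cdot|}$ converges in $L^1(\mathbb{R})$, so I can interchange sum and Fourier transform in (2). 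Applying the standard formula $\widehat{e^{-a|\cdot|}}(\xi) = \sqrt{2/\pi}\,a/(a^2+\xi^2)$ termwise with $a = \pi^2 k^2$ yields \eqref{lem-spectral-key-g-2} at once.

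For (3), the bound is immediate when $|\xi|\leq 1$: the right-hand side of \eqref{lem-spectral-key-g-2} is bounded above by $C\sum k^{2\gamma-4}$, convergent for $\gamma<3/2$, and $(1+|\xi|^2)^{\gamma/2-3/4}$ is bounded away from $0$ on $[-1,1]$. For $|\xi|\geq 1$ I would split the sum at $k_0 := \lfloor|\xi|^{1/2}/\pi\rfloor + 1$, the index at which the two denominator terms balance. In the low regime $k\leq k_0$, using $\pi^4 k^4 + \xi^2 \geq \xi^2$,
\[
\sum_{k\leq k_0}\frac{k^{2\gamma}}{\pi^4 k^4 + \xi^2} \leq \frac{1}{\xi^2}\sum_{k\leq k_0} k^{2\gamma} \leq C\,\frac{k_0^{2\gamma+1}}{\xi^2} \leq C\,|\xi|^{\gamma-3/2},
\]
where $\gamma > -1/2$ ensures $\sum_{k\leq k_0} k^{2\gamma} \asymp k_0^{2\gamma+1}$. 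In the high regime, using $\pi^4 k^4 + \xi^2 \geq \pi^4 k^4$,
\[
\sum_{k>k_0}\frac{k^{2\gamma}}{\pi^4 k^4 + \xi^2} \leq \frac{1}{\pi^4}\sum_{k>k_0} k^{2\gamma-4} \leq C\,k_0^{2\gamma-3} \leq C\,|\xi|^{\gamma-3/2},
\]
where $\gamma < 3/2$ (equivalently $2\gamma-4<-1$) makes the tail summable and of size $k_0^{2\gamma-3}$. Combining these estimates with \eqref{lem-spectral-key-g-2} gives $|\hat g_\gamma(\xi)| \leq C|\xi|^{\gamma-3/2}$ for $|\xi|\geq 1$, which together with the trivial bound on $[-1,1]$ yields \eqref{lem-spectral-key-g-3}.

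The only delicate point is that \emph{both} endpoint hypotheses on $\gamma$ are used, and are in fact sharp: $\gamma > -1/2$ is what makes the partial sum $\sum_{k\leq k_0} k^{2\gamma}$ grow like the ``expected'' power $k_0^{2\gamma+1}$ rather than being dominated by the first few terms, while $\gamma < 3/2$ is what makes the tail $\sum_{k>k_0} k^{2\gamma-4}$ both convergent and of the expected size $k_0^{2\gamma-3}$. The split at $k_0\sim|\xi|^{1/2}$ is the unique choice that makes the two estimates match and corresponds, under the change of variables $t = |\xi|^{1/2} u / \pi$, to the natural scaling of the associated integral $\int_0^\infty t^{2\gamma}/(\pi^4 t^4 + \xi^2)\,dt$, which converges iff $\gamma \in (-1/2, 3/2)$.
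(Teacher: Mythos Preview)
Your proof is correct and follows essentially the same approach as the paper's: Tonelli (the paper uses Fatou, which is equivalent here for nonnegative summands) for part~1), termwise computation of the Fourier transform for part~2), and for part~3) the same split of the sum at $k_0 \sim |\xi|^{1/2}$ with the two bounds $\pi^4 k^4 + \xi^2 \geq \xi^2$ (low regime, using $\gamma > -1/2$) and $\pi^4 k^4 + \xi^2 \geq \pi^4 k^4$ (high regime, using $\gamma < 3/2$). Your additional remarks on the sharpness of the endpoint hypotheses and the scaling heuristic are a nice touch not present in the paper.
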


\begin{proof}
By Fatou's lemma, we get
\begin{align*}
	\int_{\mR} |g_{\gamma}(s)| ds 	\leq \liminf_{N \to \infty} \sum_{k = 1}^N  k^{2 \gamma - 2} \int_{\mR} e^{- \pi^2 k^2 |s|} ds = 2 \pi^{-2} \sum_{k \geq 1} k^{2\gamma - 4} \mathop{<}^{\gamma < \frac{3}{2}} +\infty. 
\end{align*}
In other words, $g_\gamma \in L^1(\mR)$. 

We have, for $\xi \in \mR$,
\begin{multline*}
	\hat{g}_{\gamma}(\xi) = \frac{1}{\sqrt{2 \pi}} \sum_{k \geq 1} k^{2\gamma - 2} \int_{\mR} e^{- i \xi s} e^{- \pi^2 k^2 |s|} ds \\
	 = \frac{1}{\sqrt{2 \pi}} \sum_{k \geq 1} k^{2 \gamma - 2} \left(\frac{1}{\pi^2 k^2 + i \xi } + \frac{1}{\pi^2 k^2 - i \xi}  \right) = \sqrt{2}\pi^{\frac{3}{2}} \sum_{k \geq 1} \frac{k^{2 \gamma}}{\pi^4 k^4 + \xi^2},  
\end{multline*} 
which is \eqref{lem-spectral-key-g-2}. 

Let $\xi \in \mR $ be such that $|\xi| \geq 1$ and let $k_0 \in \mN^*$ be such that $k_0 \leq |\xi|^{1/2} < k_0 + 1$. We have
\begin{align} \label{lem-spec-p2}
\sum_{k=1}^{k_0} \frac{k^{2 \gamma}}{k^4 + \xi^2} \leq  \sum_{k=1}^{k_0} \frac{k^{2 \gamma}}{\xi^2}   \mathop{\leq}^{\gamma > -\frac{1}{2}} C_{\gamma} k_0^{2 \gamma + 1} |\xi|^{-2} \leq C_{\gamma} |\xi|^{\gamma - \frac{3}{2}}, 
\end{align}
and
\begin{align}  \label{lem-spec-p3}
\sum_{k = k_0 + 1}^{+\infty} \frac{k^{2 \gamma}}{k^4 + \xi^2} \leq    \sum_{k = k_0 + 1}^{+\infty} k^{2 \gamma-4} \mathop{\leq}^{\gamma < \frac{3}{2}} C_{\gamma} k_0^{2\gamma - 3} \leq C_{\gamma} |\xi|^{\gamma - \frac{3}{2}}. 
\end{align}
Combining \eqref{lem-spectral-key-g-2} with \eqref{lem-spec-p2} and \eqref{lem-spec-p3} and noting that $\hat{g}_{\gamma}$ is bounded in $\mR$ (since $g_{\gamma} \in L^1(\mR)$), we obtain \eqref{lem-spectral-key-g-3}. The proof is complete. 
\end{proof}

We now prove \Cref{lem-spectral}.

\begin{proof}[Proof of \Cref{lem-spectral}] By a standard approximation argument involving the Fatou lemma, one can assume that $V \in C^{\infty}(\mR)$ as well. This condition will be assumed from now on.

\medskip
Let $k \in \mN^*$. We have
\begin{align*}
	\int_0^{+\infty} |a_k(t)|^2 dt &= k^{2 \gamma} \int_0^{+\infty} \left(\int_0^t e^{- \pi^2 k^2 (t - s_1)} V(s_1) ds_1 \right) \left(\int_0^t e^{- \pi^2 k^2 (t - s_2)} V(s_2) ds_2 \right) dt  \nonumber \\[6pt]
	&= k^{2 \gamma} \int_0^{+\infty} \left(\int_0^t \int_0^t e^{- \pi^2 k^2 (2t - s_1 - s_2)} V(s_1) V(s_2) ds_1 ds_2 \right) dt.  
\end{align*}
Applying Fubini's theorem, we obtain 
\begin{align}
	\int_0^{+\infty} |a_k(t)|^2 dt &= k^{2 \gamma} \iint_{(0, +\infty)^2}  V(s_1) V(s_2) \left(\int_{\max\{s_1, s_2\}}^{+\infty} e^{- \pi^2 k^2 (2t - s_1 - s_2)} dt \right) ds_1 ds_2 \nonumber \\[6pt]
	&= \frac{1}{2 \pi^2} \int_{\mR} \int_{\mR} V(s_1) V(s_2) k^{2\gamma - 2} e^{- \pi^2 k^2 |s_1 - s_2|} ds_1 ds_2, 		\label{lem-spec-ak}
\end{align}
since $\mbox{supp } V \subset [0, + \infty)$.

Set 
\begin{align*}		g_{\gamma}(s) = \sum_{k \geq 1} k^{2 \gamma - 2} e^{- \pi^2 k^2 |s|} \quad \text{ for } s \in \mR.
\end{align*}
It follows from \eqref{lem-spec-ak} that 
\begin{align} \label{lem-spec-ak-p1}
	2 \pi^2 \sum_{k \geq 1} \int_0^{+\infty} |a_k(t)|^2 dt  = \int_{\mR} V(s) (V \ast g_{\gamma})(s) ds. 
\end{align}
Applying Parseval's formula and Lemma \ref{lem-spectral-key-g}, we get
\begin{multline}		\label{lem-spec-As1-2}	
\int_{\mR} V(s) (V \ast g_{\gamma})(s) ds = \int_{\mR}  \overline{\hat{V}(\xi)} \hat{V}(\xi) \hat{g}_{\gamma}(\xi) d\xi = \int_{\mR} |\hat{V}(\xi)|^2 \hat{g}_{\gamma}(\xi) d\xi \\
\leq C_{\gamma} \int_{\mR} |\hat{V}(\xi)|^2 \left(1 + |\xi|^2 \right)^{\frac{\gamma}{2} - \frac{3}{4}} d \xi \leq C_{\gamma} \| V\|_{H^{\frac{\gamma}{2} - \frac{3}{4}}(\mR)}^2.  
\end{multline}
 
Combining \eqref{lem-spec-ak-p1} and \eqref{lem-spec-As1-2}, we obtain \eqref{lem-spectral-ineq}. The proof is complete.
\end{proof}

\subsection{Proof of Proposition \ref{pro-y1}}	\label{subsec-proofproy1}
\textit{Step 1: Proof of \eqref{E-1-y1-1}.}  Represent  $y$ under the form
\begin{align}	\label{lem-y1-U-1}
y(t, x) = \sum_{k \ge 1}a_k (t) e_k(x) \quad \mbox{ in } (0, T) \times (0,1),  
\end{align}
where $e_k$ is defined by \eqref{def-ek}.  Since $y$ solves \eqref{y1y2}, it follows that for every $k \in \mN^*$,
\begin{align}	\label{lem-y1-U-2}
a_k'(t) +   k^2 \pi^2 a_k(t) = u_k (t) \quad \text{ in } (0, T) \quad \mbox{ and } \quad  a_k (0) = 0,
\end{align}
where
\begin{align}	\label{lem-y1-U-3}
u_k (t) = \sqrt{2}  u(t) \int_0^1 \sin (k \pi x ) \, dx = \omega_k u(t) \quad \text{ in } (0, T),
\end{align}
where
\begin{align} \label{lem-y1-wk}
\omega_k = \frac{\sqrt{2} \left( 1 - (-1)^k \right)}{k \pi}.  
\end{align}
We deduce from \eqref{lem-y1-U-2} and \eqref{lem-y1-U-3} that for every $k \in \mN^*$, 
\begin{align}	\label{lem-y1-U-4}
a_k(t) &=  \omega_k \int_0^t u(s) e^{-  \pi^2 k^2 (t - s)} ds \quad \text{ in } (0, T).
\end{align}

Combining  \eqref{lem-y1-U-1} and \eqref{lem-y1-U-4}, we obtain 
\begin{align}	\label{lem-y1x-U-1}
y_{x}(t, x) = \sum_{k \ge 1} k \pi a_k (t) f_k(x) \quad \mbox{ in } (0, T) \times (0,1), 
\end{align}
where 
$$
f_k(x) = \frac{1}{k \pi} e_k'(x) = \sqrt{2} \cos (k \pi x) \mbox{ in } [0, 1]. 
$$

Denote by $u_e$ the extension of $u$ by 0 outside $[0, T]$, i.e., 
\begin{align} \label{pro-y1-ue}
	u_e(t) = \begin{cases}
	u(t)		 \quad &\text{ if } t \in [0, T], 	\\
	0		\quad &\text{ if } t \in \mR \setminus [0, T], 
	\end{cases}
\end{align}
and set
\begin{align} \label{pro-y1-tb-p1}
a_{k, e} (t) = \omega_k \int_0^t u_e(s) e^{-  \pi^2 k^2 (t - s)} ds  \quad \text{ for } k \geq 1, \;   t \ge 0.
\end{align}
Then
\be\label{pro-y1-tb}
a_{k, e} (t) = a_k (t) \mbox{ in } (0, T) \mbox{ for } k \ge 1.
\ee
We thus obtain 
\begin{align*}
	\|y_{x} \|_{L^2((0, T) \times (0, 1))}^2 \mathop{=}^{\eqref{lem-y1x-U-1}} \int_0^T \sum_{k \geq 1} |\pi k a_k(t)|^2 dt 
	\mathop{\leq}^{\eqref{pro-y1-tb}}	 \sum_{k \geq 1} \int_0^{+\infty} |\pi k a_{k, e} (t)|^2 dt, 
\end{align*}
which yields 
\begin{align}\label{pro-y1-p1}
	\|y_{x} \|_{L^2((0, T) \times (0, 1))}^2  \le C \sum_{k \geq 1} \int_0^{+\infty} |b_k(t)|^2 dt \mbox{ where } b_k(t) = \int_0^t u_e(s) e^{-  \pi^2 k^2 (t - s)} ds. 
\end{align}

Applying  Lemma \ref{lem-spectral} with $V = u_e$, we derive from  \eqref{pro-y1-tb} that
\begin{align}\label{pro-y1-p2}
	\sum_{k \geq 1} \int_0^{+\infty} |b_{k} (t)|^2 dt \le  C \|u_e \|_{H^{-\frac{3}{4}}(\mR)}^2 \mathop{\le}^{\eqref{pro-y1-ue}} C \|u\|_{[H^{\frac{3}{4}}(0, T)]^*}^2. 
\end{align}
Combining \eqref{pro-y1-p1} and  \eqref{pro-y1-p2}, we obtain
\begin{align}	\label{E-1-y1-1'}
	\|y_x \|_{L^2((0, T) \times (0, 1))} \leq C \|u\|_{[H^{\frac{3}{4}}(0, T)]^*}, 
\end{align}
which yields \eqref{E-1-y1-1} since $y(t, 0) = y(t, 1) = 0$ for $t \in (0, T)$. 

\medskip

\textit{Step 2: Proof of \eqref{E-y1-2-1}.} Represent $U$ under the form 
\begin{align*}	U(t) = \sum_{k \geq 1} U_k(t) e_k(x)	\quad \text{ in } (0, T) \times (0, 1), 
\end{align*}
where 
\begin{align} \label{lem-y1-def-Uk}	U_k(t) = \sqrt{2} U(t) \int_0^1  \sin(k \pi x) dx  = \omega_k U(t)
\end{align}
with $\omega_k$ being defined by \eqref{lem-y1-wk}. Then 
\begin{align}
	(y - U)(t) = \sum_{k \geq 1} \Big(a_k(t) - U_k(t) \Big) e_k(x) \quad \text{ in } (0, T) \times (0, 1). 
\end{align}
Note that, for $k \geq 1$ and $t \geq 0$,
$$
	 a_k(t) - U_k(t) \mathop{=}^{\eqref{lem-y1-U-4},\eqref{lem-y1-def-Uk}} \omega_k \left(\int_0^t u(s) e^{- \pi^2 k^2 (t - s)} ds - U(t) \right), 
$$
and, by integration by parts,  
\begin{multline*}
	\int_0^t u(s) e^{- \pi^2 k^2 (t - s)} ds - U(t) = - k^2 \pi^2 \int_0^t U(s) e^{- \pi^2 k^2 (t - s)} ds  \\[6pt]
	= - k^2 \pi^2 c_k(t) \mbox{ where } c_k(t) = \int_0^t U(s) e^{- \pi^2 k^2 (t - s)} ds. 
	\end{multline*}
It follows that, for $k \ge 1$ and $t \ge 0$,
$$
a_k(t)  - U(t) = - \omega_k k^2 \pi^2 c_k(t).  
$$
We thus obtain 
\begin{align*}
\|y - U\|_{L^{2}((0, T) \times (0, 1))}^2 &= \sum_{k \geq 1} \int_0^T  |\omega_k k^2 \pi^2 c_k(t)|^2 dt \mathop{\le}^{\eqref{lem-y1-wk}}  \sum_{k \geq 1} \int_0^T  k^2 \pi^2 | c_k(t)|^2 dt. 
\end{align*}
Applying Lemma \ref{lem-spectral} with $\gamma = 1$, we obtain 
\begin{align*}
	 \| y - U \|_{L^2((0, T) \times (0, 1))}  \leq C \|U\|_{[H^{\frac{1}{4}}(0, T)]^*}. 
\end{align*}
Since $ \|U\|_{[H^{\frac{1}{4}}(0, T)]^*} \le C \|u\|_{[H^{\frac{5}{4}}(0, T)]^*}$ by \Cref{lem-Ueandu} below, assertion \eqref{E-y1-2-1} follows. 

\medskip 

The proof of  \Cref{pro-y1} is complete. \qed

\medskip 
In the last part of the proof of \Cref{pro-y1}, we used the following standard simple result whose proof is given in \Cref{sect-lem-Ueandu} for the convenience of the readers. 

\begin{lemma} \label{lem-Ueandu}Let $T>0$, $s \ge 1$ and $u \in L^1(0, T)$. There exists a positive constant $C$ depending only on $T$ such that 
\begin{align}	\label{lem-Ueandu-conclusion}
	 \|U\|_{[H^{s - 1}(0, T)]^*} \le C \| u\|_{[H^s(0, T)]^*},
\end{align}
where
\begin{align*}
	U(t) = \int_0^t u(s) ds \quad \text{ for } t \in [0, T].
\end{align*}
\end{lemma}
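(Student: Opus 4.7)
The statement is a standard duality estimate: dualizing the integration operator $u \mapsto U$ corresponds to testing against a primitive of $\varphi$. The plan is to fix any $\varphi \in C^\infty([0,T])$ (which is dense in $H^{s-1}(0,T)$) and introduce the primitive
\begin{align*}
\Phi(t) = -\int_t^T \varphi(\tau)\,d\tau, \qquad t \in [0, T],
\end{align*}
so that $\Phi' = \varphi$ and $\Phi(T) = 0$. Since also $U(0) = 0$, integration by parts kills both boundary terms and yields
\begin{align*}
\int_0^T U(t)\,\varphi(t)\,dt \;=\; \int_0^T U(t)\,\Phi'(t)\,dt \;=\; -\int_0^T u(t)\,\Phi(t)\,dt.
\end{align*}

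Bounding the last integral by the dual pairing gives
\begin{align*}
\left| \int_0^T U(t)\,\varphi(t)\,dt \right| \leq \|u\|_{[H^s(0,T)]^*}\, \|\Phi\|_{H^s(0,T)},
\end{align*}
so the lemma reduces to establishing the regularity-gain estimate
\begin{align*}
\|\Phi\|_{H^s(0,T)} \leq C(T,s)\, \|\varphi\|_{H^{s-1}(0,T)}.
\end{align*}
Taking the supremum over $\varphi$ with $\|\varphi\|_{H^{s-1}(0,T)} \leq 1$ then produces \eqref{lem-Ueandu-conclusion}.

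The regularity gain is the only real content. For the integer case $s = 1$ it is immediate: $\|\Phi'\|_{L^2(0,T)} = \|\varphi\|_{L^2(0,T)}$, and Cauchy--Schwarz applied to $\Phi(t) = -\int_t^T \varphi$ gives $\|\Phi\|_{L^2(0,T)} \leq T\,\|\varphi\|_{L^2(0,T)}$, hence $\|\Phi\|_{H^1(0,T)} \leq (1+T)\,\|\varphi\|_{L^2(0,T)}$. For general $s \geq 1$, I would use the equivalent characterization $\|f\|_{H^s(0,T)}^2 \sim \|f\|_{L^2(0,T)}^2 + \|f'\|_{H^{s-1}(0,T)}^2$ together with the same trivial $L^2$ bound on $\Phi$. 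The only slightly finicky point, and the main potential obstacle, is justifying this characterization when $s$ is non-integer; this is standard and can be handled by composing an extension $H^{s-1}(0,T) \to H^{s-1}(\mathbb{R})$ with antidifferentiation on $\mathbb{R}$ (which lifts $H^{s-1}(\mathbb{R})$ into $H^s_{\mathrm{loc}}(\mathbb{R})$), yielding a constant that depends only on $T$ and $s$.
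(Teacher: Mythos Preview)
Your proposal is correct and follows essentially the same route as the paper: introduce the primitive $\Phi(t)=\int_t^T\varphi$, integrate by parts using $U(0)=\Phi(T)=0$, and then invoke the regularity gain $\|\Phi\|_{H^s}\le C\|\varphi\|_{H^{s-1}}$ via $\Phi'=\pm\varphi$ and the trivial $L^2$ bound. The paper simply asserts the last step without the extra discussion you give for non-integer $s$, but the argument is otherwise identical up to a sign convention.
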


\section{Proof of Theorem \ref{Mainresult}}	\label{provemainsresult}

The main result of this section, which implies Theorem \ref{Mainresult}, is as follows. 
  
    \begin{theorem}     \label{coercivity->obstruction}
        Let $T > 0$ and let $k_0 \in \{2, 10\}$. There exists $\varepsilon_0 > 0$ depending only on $T$ such that for every $0 < \varepsilon < \varepsilon_0$ and for every $u \in L^1(0, T)$ with $\|u\|_{[H^{\frac{3}{4}}(0, T)]^*} < \varepsilon_0$, we have
            \begin{align*}
                y\left(T, \cdot\right) \neq 0,
            \end{align*}
            where $y \in Y_{T}$ is the unique solution of the system
        \begin{align}   \label{Burgersdetailed}
            \begin{cases}
                y_t(t, x) - y_{xx} (t, x) + yy_x (t, x) = u(t) \quad &\text{ in } (0, T) \times (0, 1), \\
                y(t, 0) = y(t, 1) = 0 &\text{ in } (0, T), \\
                y(0, x) =  - \varepsilon \sin (k_0 \pi x) &\text{ in } (0, 1).
            \end{cases}
        \end{align}        
    \end{theorem}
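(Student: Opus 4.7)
The plan is to argue by contradiction using a duality identity, the obstruction \Cref{2obstruction}, and the sharp heat estimate \Cref{pro-y1}. Assume $y(T,\cdot) = 0$. Multiplying \eqref{Burgersdetailed} by $\phi_{k_0}(t,x) = e^{\pi^2 k_0^2 t}\sin(k_0\pi x)$ and integrating by parts on $(0,T)\times(0,1)$, the linear contribution drops out thanks to $\phi_{k_0,t}+\phi_{k_0,xx}=0$, the control term vanishes because $k_0$ is even (so $\int_0^1 \phi_{k_0}(t,x)\,dx = 0$), the nonlinear term produces $\tfrac{1}{2}\int\!\!\int y^2\phi_{k_0,x}$ after one more integration by parts, and the boundary terms at $t=0,T$ give $-\int_0^1 y_0\sin(k_0\pi x)\,dx = \varepsilon/2$. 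This yields the fundamental identity
\[
\varepsilon \;=\; \int_0^T\!\!\int_0^1 y^2(t,x)\,\phi_{k_0,x}(t,x)\,dx\,dt.
\]

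Next I would decompose $y = F + y_1 + R$, where $F(t,x) = -\varepsilon e^{-\pi^2 k_0^2 t}\sin(k_0\pi x)$ is the free heat evolution of $y_0$, $y_1$ solves \eqref{our1storder-***} with $u_1 = u$, and $R := y - F - y_1$ is the nonlinear remainder satisfying $R_t - R_{xx} = -y y_x$ with $R(0)=0$ and homogeneous Dirichlet boundary conditions. Expand $y^2$ into six terms inside the identity above. The pure free-evolution term $\int\!\!\int F^2 \phi_{k_0,x}$ vanishes by the direct computation $\int_0^1 \sin^2(k_0\pi x)\cos(k_0\pi x)\,dx = 0$. The cross terms and the $R^2$ term are handled by Cauchy--Schwarz using $\|F\|_{L^2}\le C\varepsilon$, the sharp linear bound $\|y_1\|_{L^2((0,T),H^1)} \le C_T\|u\|_{[H^{3/4}(0,T)]^*}$ from \Cref{pro-y1}, and a parabolic energy estimate on $R$ combined with the one-dimensional Gagliardo--Nirenberg inequality $\|y\|_{L^\infty_x}^2 \le C\|y\|_{L^2_x}\|y_x\|_{L^2_x}$, which yields $\|R\|_{L^2((0,T),H^1)} \le C_T(\varepsilon + \|u\|_{[H^{3/4}]^*})^2$.

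The critical term is $\int\!\!\int y_1^2 \phi_{k_0,x}$. By \Cref{2obstruction} it would be bounded above by $-C_T \|u\|^2_{[H^{5/4}]^*}$ if we had $y_1(T,\cdot)=0$, which is not the case here. However, the even Fourier modes of $y_1(T)$ vanish automatically (as noted after \eqref{ipp-linear}), and combining $y(T)=0$ with the fact that $F(T)$ lives only on the $e_{k_0}$ mode yields $\|y_1(T)\|_{L^2} \le \|F(T)\|_{L^2} + \|R(T)\|_{L^2} \le C_T\bigl(\varepsilon + (\varepsilon + \|u\|_{[H^{3/4}]^*})^2\bigr)$. I would then split $u = u^* + \delta u$, with $u^*$ chosen so that the corresponding linear solution $y_1^*$ of \eqref{our1storder-***} satisfies $y_1^*(T)=0$, and $\|\delta u\|_{[H^{5/4}]^*}$ controlled by $\|y_1(T)\|_{L^2}$ via a moment estimate for the heat equation with scalar time control (in the spirit of the spectral argument underlying \Cref{lem-spectral}). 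Applying \Cref{2obstruction} to $y_1^*$ and expanding $y_1 = y_1^* + (y_1 - y_1^*)$ gives
\[
\int_0^T\!\!\int_0^1 y_1^2 \phi_{k_0,x}\,dx\,dt \;\le\; -C_1\|u\|^2_{[H^{5/4}]^*} + C_2\,\|u\|_{[H^{3/4}]^*}\|y_1(T)\|_{L^2} + C_3 \|y_1(T)\|_{L^2}^2.
\]
Collecting all estimates, the fundamental identity becomes
\[
\varepsilon + C_1 \|u\|^2_{[H^{5/4}]^*} \;\le\; C_T\Bigl(\varepsilon(\varepsilon+\|u\|_{[H^{3/4}]^*}) + (\varepsilon+\|u\|_{[H^{3/4}]^*})^3\Bigr),
\]
so that for $\varepsilon_0$ small enough (depending only on $T$ through the constants), the right-hand side is at most $\varepsilon/2$ whenever $\varepsilon,\|u\|_{[H^{3/4}]^*}<\varepsilon_0$, contradicting $\varepsilon>0$. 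The main obstacle is the construction of $u^*$ with $y_1^*(T)=0$ and the bound $\|\delta u\|_{[H^{5/4}]^*} \le C_T\|y_1(T)\|_{L^2}$: this requires a careful moment/spectral argument for the heat equation with scalar time control, paralleling the analysis behind \Cref{pro-y1}.
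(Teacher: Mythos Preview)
Your final inequality does not close. On the left you have $\varepsilon + C_1\|u\|_{[H^{5/4}]^*}^2$, while on the right you have a term $C_T\|u\|_{[H^{3/4}]^*}^3$. Since $\|u\|_{[H^{5/4}]^*}\le C\|u\|_{[H^{3/4}]^*}$ (the $[H^{5/4}]^*$ norm is the \emph{weaker} one), you cannot absorb this cubic: take $\varepsilon$ very small and $\|u\|_{[H^{3/4}]^*}$ close to $\varepsilon_0$ with $\|u\|_{[H^{5/4}]^*}$ tiny, and the inequality becomes $0\le C_T\varepsilon_0^3$, which is no contradiction. The problem originates upstream: your bound $\|R\|_{L^2((0,T),H^1)}\le C_T(\varepsilon+\|u\|_{[H^{3/4}]^*})^2$ and the resulting cross term $\int\!\!\int y_1 R\,\phi_{k_0,x}$ produce exactly this unabsorbable $\|u\|_{[H^{3/4}]^*}^3$. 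Every error term must carry enough factors of the \emph{same} weak norm $\|u\|_{[H^{5/4}]^*}$ that appears coercively on the left.

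The paper fixes this by going one order further in the power-series expansion, writing $y=y_1+y_2+\delta y$ with $y_2$ solving the linear heat equation with source $-y_1y_{1,x}$, and by proving the sharp bounds $\|y_2\|_{L^2}\le C\|u\|_{[H^{5/4}]^*}\|u\|_{[H^{3/4}]^*}$ and $\|\delta y\|_{L^2}\le C\|u\|_{[H^{5/4}]^*}^{3/2}\|u\|_{[H^{3/4}]^*}^{3/2}+C\varepsilon$. These come from the second estimate of \Cref{pro-y1}, namely $\|y_1-U\|_{L^2}\le C\|u\|_{[H^{5/4}]^*}$, which you did not use. With these bounds, the dangerous cross term $\int\!\!\int y_1 y_2\,\phi_{k_0,x}$ is shown to vanish by the symmetry $y_1(t,x)=y_1(t,1-x)$, $y_2(t,x)=-y_2(t,1-x)$, and the remaining error terms all contain at least $\|u\|_{[H^{5/4}]^*}^2$, which is then absorbed. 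The paper also handles the constraint $y_1(T,\cdot)\neq 0$ differently: instead of splitting $u$ on $(0,T)$, it extends $y$ by $0$ to $(0,2T)$, uses smoothing to get $\|y_1(2T,\cdot)\|_{L^2}\le C(\varepsilon+\|y\|_{L^2}^2)$, and then applies a moment-method control on $(2T,3T)$ so that \Cref{2obstruction} can be invoked on $(0,3T)$.
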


The proof of \Cref{coercivity->obstruction} is based on the power series expansion. To this end, we study 
 \begin{align}\label{M-y1}
\begin{cases}
  y_{1,t} (t, x) - y_{1, xx} (t, x) = u(t)  \quad &\text{ in } (0, T) \times (0, 1), \\
                y_1(t, 0) = y_1(t, 1) = 0 &\text{ in } (0, T), \\
                y_1(0, x) = 0 &\text{ in } (0, 1), 
\end{cases}
\end{align}
and 
\begin{align} \label{M-y2}
             \begin{cases}
                y_{2,t} - y_{2, xx} = -y_1 y_{1,x}  \quad &\text{ in } (0, T) \times (0, 1), \\
                y_2(t, 0) = y_2(t, 1) = 0 &\text{ in } (0, T), \\
                y_2(0, x) = 0 &\text{ in } (0, 1). 
            \end{cases}   
\end{align}  
The study of $y_1$ is already given in \Cref{pro-y1}. The rest of this section, which consists of two subsections, is organized as follows.  In the first subsection, we establish estimates for the systems $y_2$, $\delta y$, and $y$, where  $y$ is the solution of 
   \begin{align}
        \begin{cases}   \label{M-y}
            y_t - y_{xx} + yy_x = u(t) \quad &\text{ in } (0, T) \times (0, 1), \\
            y(t, 0) = y(t, 1) = 0 &\text{ in } (0, T), \\
            y(0, x) = y_0(x) &\text{ in } (0, 1),
        \end{cases}
        \end{align}
        and 
        \be
        \mbox{$\delta y = y - y_1 - y_2$.}
        \ee
        The proof of  Theorem \ref{coercivity->obstruction} is then given in the second subsection.

\subsection{Some useful lemmas} \label{sub-sec-1st2ndorder}

We begin with estimates for $y_2$. 

\begin{lemma} \label{lem-y2small} Let $T > 0$ and let $u \in L^1(0, T)$. Let $y_1, y_2 \in Y_T$ be the unique solutions of 
\eqref{M-y1} and \eqref{M-y2}, respectively. Then
\begin{align}
   \|y_2\|_{Y_T}  \leq C \|u\|_{[{H^{\frac{3}{4}}(0, T)]^*}}^2, \label{E-y2-2}
\end{align}
and
\begin{align}	\label{E-y2-2'}
	\|y_2\|_{L^2((0, T) \times (0, 1))} \leq C \|u\|_{[H^{\frac{5}{4}} (0, T)]^*} \|u\|_{[H^{\frac{3}{4}}(0, T)]^*}, 
\end{align}
for some positive constant $C$ depending only on $T$. 
\end{lemma}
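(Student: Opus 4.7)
My strategy splits the proof into the two estimates. For \eqref{E-y2-2}, I apply standard parabolic regularity: since $y_2$ solves \eqref{M-y2} with zero initial/boundary data and forcing $-y_1 y_{1,x}$, one has $\|y_2\|_{Y_T} \le C \|y_1 y_{1,x}\|_{L^1((0,T), L^2(0,1))}$ (see \Cref{ap-heat-fx}). Using $y_1(t, \cdot) \in H^1_0(0, 1)$ and the Poincar\'e inequality $\|y_1(t,\cdot)\|_{L^\infty(0, 1)} \le \|y_{1, x}(t,\cdot)\|_{L^2(0, 1)}$ pointwise in $t$, I get $\|y_1 y_{1, x}\|_{L^1((0,T), L^2(0,1))} \le \|y_{1, x}\|_{L^2((0, T) \times (0, 1))}^2$, and Proposition \ref{pro-y1} bounds this by $C\|u\|_{[H^{3/4}(0,T)]^*}^2$, establishing \eqref{E-y2-2}.

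The sharper estimate \eqref{E-y2-2'} calls for a duality argument. For $g \in L^2((0, T) \times (0, 1))$, let $\phi$ solve the backward adjoint heat equation $-\phi_t - \phi_{xx} = g$, $\phi(T, \cdot) = 0$, $\phi(t, 0) = \phi(t, 1) = 0$; classical parabolic regularity will yield $\|\phi_x\|_{L^\infty((0, T), L^2(0, 1))} \le C\|g\|_{L^2((0,T)\times(0,1))}$. Integration by parts in $t$ and $x$ (using the zero initial/terminal/boundary conditions on $y_2$ and $\phi$), combined with $y_1 y_{1, x} = \tfrac{1}{2} (y_1^2)_x$, reduces the pairing to $\int y_2 g = \tfrac{1}{2} \int_0^T \int_0^1 y_1^2 \phi_x \, dx \, dt$. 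Writing $y_1 = U + (y_1 - U)$ with $U(t) = \int_0^t u(s)\, ds$ independent of $x$, and observing that $\int_0^1 \phi_x \, dx = 0$, the $U^2$ contribution vanishes, leaving
\[
\int_0^1 y_1^2 \phi_x \, dx = 2 U(t) \int_0^1 (y_1 - U) \phi_x \, dx + \int_0^1 (y_1 - U)^2 \phi_x \, dx.
\]
The first integral I estimate by Cauchy--Schwarz in $t$; the second by the pointwise bound $\|(y_1 - U)(t, \cdot)\|_{L^\infty(0, 1)} \le |U(t)| + \|y_{1, x}(t, \cdot)\|_{L^2(0, 1)}$ (from the fundamental theorem of calculus, since $(y_1 - U)(t, 0) = -U(t)$ and $(y_1 - U)_x = y_{1, x}$) followed by H\"older in $t$ with exponents $(2, 2, \infty)$. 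Both pieces are dominated by $C (\|y_{1, x}\|_{L^2((0,T)\times(0,1))} + \|U\|_{L^2(0, T)}) \|y_1 - U\|_{L^2((0,T)\times(0,1))} \|\phi_x\|_{L^\infty((0,T), L^2(0,1))}$. Proposition \ref{pro-y1} bounds $\|y_{1, x}\|_{L^2((0,T)\times(0,1))} \le C\|u\|_{[H^{3/4}(0,T)]^*}$ and $\|y_1 - U\|_{L^2((0,T)\times(0,1))} \le C\|u\|_{[H^{5/4}(0,T)]^*}$; Lemma \ref{lem-Ueandu} with $s = 1$ combined with the embedding $H^1 \hookrightarrow H^{3/4}$ yields $\|U\|_{L^2(0, T)} \le C\|u\|_{[H^1(0,T)]^*} \le C\|u\|_{[H^{3/4}(0,T)]^*}$. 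Combining gives $C\|u\|_{[H^{5/4}(0,T)]^*} \|u\|_{[H^{3/4}(0,T)]^*} \|g\|_{L^2}$; the supremum over $g$ delivers \eqref{E-y2-2'}.

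The main obstacle is that a direct Cauchy--Schwarz applied to $\int y_1 y_{1, x} \phi$ produces only $C\|u\|_{[H^1]^*} \|u\|_{[H^{3/4}]^*}$, which is strictly weaker than claimed. Extracting the sharper factor $\|u\|_{[H^{5/4}]^*}$ requires both the splitting $y_1 = U + (y_1 - U)$ (to activate the refined estimate on $y_1 - U$ from Proposition \ref{pro-y1}) and the fortuitous vanishing of the $U^2$ contribution via $\phi|_{x=0,1} = 0$, which prevents the appearance of an $L^\infty(0, T)$ norm of $U$ that is not available in terms of $\|u\|_{[H^{3/4}(0,T)]^*}$.
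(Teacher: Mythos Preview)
Your proof is correct and follows essentially the same approach as the paper. For \eqref{E-y2-2} the arguments are identical. For \eqref{E-y2-2'} the paper invokes its \Cref{lem-B2} (which packages the backward-heat duality you carry out explicitly) and then uses the algebraic identity $y_1 y_{1,x} = \partial_x\big((y_1-U)y_1\big) - (y_1-U)y_{1,x}$, whereas you integrate by parts first and then expand $y_1^2 = U^2 + 2U(y_1-U) + (y_1-U)^2$, exploiting $\int_0^1 \phi_x\,dx = 0$; both decompositions hinge on the same splitting $y_1 = U + (y_1-U)$ and lead to the same final bound.
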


\begin{remark} \rm \label{rem-y1-2} It is worth noting that $y_1$ satisfies all the estimates given in \Cref{pro-y1}.  
\end{remark}

\begin{proof}
	We have, by \Cref{ap-heat-fx} in \Cref{wp-bg},  
	$$	
			\|y_2 \|_{Y_T} \le C \|y_1 y_{1, x} \|_{L^1((0, T), L^2(0, 1))}. 
	$$
Since
$$	
			 \|y_1 y_{1, x} \|_{L^1((0, T), L^2(0, 1))} 	\leq C \|y_1 \|_{L^2((0, T), H^1(0, 1))}^2 		\mathop{\leq}^{\Cref{pro-y1}} C \| u\|^2_{[H^{\frac{3}{4}}(0, T)]^*}, 
$$
assertion \eqref{E-y2-2} follows.

\medskip
	
	We next establish \eqref{E-y2-2'}. We have, by \Cref{lem-B2} in \Cref{wp-bg}, 
	\begin{align}	\label{L2y2-0}
		\|y_2 \|_{L^2((0, T) \times (0, 1))} \le C \|y_1 y_{1, x} \|_{ \left(C \left([0, T], H^1_0(0, 1) \right) \right)^*}.
	\end{align}
Note that
	\begin{align}	\label{L2y2-1}
		y_1 y_{1, x} =  \partial_x\Big((y_1 - U) y_1  \Big) - (y_1 - U) y_{1, x} \quad \mbox{ where } \quad U(t) = \int_0^t u(s) \, ds. 
	\end{align}
We have
\begin{multline}	\label{L2y2-2}
	\left \| \partial_x \Big( (y_1 - U) y_1 \Big) \right \|_{\left(C \left([0, T], H^1_0(0, 1) \right)  \right)^*} \leq C \| (y_1 - U) y_1 \|_{L^1((0, T), L^2(0, 1))} \\[6pt]
	\le 
		 C \|y_1 - U \|_{L^2((0, T) \times (0, 1))} \|y_1 \|_{L^2((0, T), H^1(0, 1))}.
\end{multline}
We also have 
	\begin{multline}	\label{L2y2-3}
		\| (y_1 - U) y_{1, x}\|_{\left(C \left([0, T], H^1_0(0, 1) \right)  \right)^*} \leq C \| (y_1 - U) y_{1, x}\|_{L^1((0, T) \times (0, 1))} \\[6pt]
		 \leq C \|y_1 - U\|_{L^2((0, T) \times (0, 1))} \|y_{1, x} \|_{L^2((0, T) \times (0, 1))}.
	\end{multline}
	Combining \eqref{L2y2-1}, \eqref{L2y2-2}, and \eqref{L2y2-3}, we obtain
	\begin{align}	\label{yy1XT'}
		\| y_1 y_{1, x} \|_{\left(C \left([0, T], H^1_0(0, 1) \right)  \right)^*} \leq C \|y_1 - U\|_{L^2((0, T) \times (0, 1))} \|y_1 \|_{L^2((0, T), H^1(0, 1))}.
	\end{align}
	Applying \Cref{pro-y1} to $y_1$, we derive from \eqref{yy1XT'} that
\begin{align}
		 \|y_1 y_{1, x} \|_{\left(C \left([0, T], H^1_0(0, 1) \right)  \right)^*}   \le C \|u\|_{[H^{\frac{5}{4}}(0, T)]^*} \|u\|_{[H^{\frac{3}{4}}(0, T)]^*}.	\label{L2y2-4}
	\end{align}			
Combining \eqref{L2y2-0} and \eqref{L2y2-4}, we obtain \eqref{E-y2-2'}. The proof is complete. 
\end{proof}


We next establish several estimates for $\delta y$.

\begin{lemma}\label{lem-dysmall}  Let $T > 0$, $u \in L^1(0, T)$, and $y_0 \in L^2(0, 1)$. Let $y_1$, $y_2$, and $y$ be the unique solutions in $Y_T$ of \eqref{M-y1}, \eqref{M-y2}, and \eqref{M-y}, respectively. 
Set
\begin{align}
	\delta y = y - y_1 - y_2 \mbox{ in } (0, T) \times (0, 1).
\end{align}
There exist positive constants $\varepsilon$ and $C$ depending only on $T$ such that, if 
\begin{align}
\|y_0\|_{L^2(0, 1)} + \|u\|_{[H^{\frac{3}{4}}(0, T)]^*} \le \varepsilon, 
\end{align}
then
\begin{align}	\label{lem-deltay-1}
	\|\delta y\|_{Y_T} \leq C \|u\|_{[H^{\frac{3}{4}}(0, T)]^*}^3 + C \|y_0\|_{L^2(0, 1)},
\end{align}
and
\begin{align}	\label{lem-deltay-2}
	\| \delta y\|_{L^2((0, T) \times (0, 1))} \leq  C \| u\|_{[H^{\frac{5}{4}}(0, T)]^*}^{\frac{3}{2}} \| u\|_{[H^{\frac{3}{4}}(0, T)]^*}^{\frac{3}{2}} + C \|y_0\|_{L^2(0, 1)}.
\end{align}
\end{lemma}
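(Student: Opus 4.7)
The plan is to write down the PDE satisfied by $\delta y$ and pursue two distinct strategies: a standard energy argument for the $Y_T$ bound \eqref{lem-deltay-1}, and the dual estimate of \Cref{lem-B2} for the finer $L^2$ bound \eqref{lem-deltay-2}, in the same spirit as \Cref{lem-y2small}.

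Subtracting \eqref{M-y1} and \eqref{M-y2} from \eqref{M-y} and using $y - y_1 = y_2 + \delta y$, one obtains
\[
\delta y_t - \delta y_{xx} = y_1 y_{1,x} - y y_x = -\tfrac{1}{2}\bigl((y + y_1)(y_2 + \delta y)\bigr)_x, \qquad \delta y(0, \cdot) = y_0,
\]
which I rearrange as
\[
\delta y_t - \delta y_{xx} + \tfrac{1}{2}\bigl((y + y_1)\,\delta y\bigr)_x = -\tfrac{1}{2}\bigl((y + y_1)\,y_2\bigr)_x.
\]

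For \eqref{lem-deltay-1} I would apply \Cref{ap-heat-fx} to this equation, treating the transport term on the left as a perturbation. Using the Sobolev embedding $H^1_0(0,1) \hookrightarrow L^\infty(0,1)$ together with \Cref{pro-y1} and \Cref{lem-y2small}, one obtains $\|y_1\|_{L^2((0,T), L^\infty)} + \|y_2\|_{L^2((0,T), L^\infty)} \leq C \|u\|_{[H^{3/4}(0,T)]^*}$. The smallness hypothesis then lets a Gronwall argument absorb the transport term; the source is bounded in $L^1((0,T), L^2)$ by $C \|u\|_{[H^{3/4}(0,T)]^*}^3$ using the same two lemmas, which yields \eqref{lem-deltay-1} after accounting for the $y_0$ contribution via the standard heat estimate.

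For \eqref{lem-deltay-2} I apply \Cref{lem-B2}, which bounds $\|\delta y\|_{L^2((0,T) \times (0,1))}$ by $C \|y_0\|_{L^2(0,1)}$ plus the norm of the full source in $\bigl(C([0,T], H^1_0(0,1))\bigr)^*$. The $\delta y$-piece of the transport term contributes $\leq C \|y + y_1\|_{L^2((0,T), L^\infty)} \|\delta y\|_{L^2((0,T) \times (0,1))}$ and is absorbed by smallness, while the bound already obtained in \eqref{lem-deltay-1} controls the corresponding $\delta y$-in-source terms. The remaining task is to estimate $\|(y + y_1)\, y_2\|_{L^1((0,T), L^2)}$, whose heart is the cross term $\|y_1 y_2\|_{L^1((0,T), L^2)}$. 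As in \Cref{lem-y2small}, I split $y_1 = (y_1 - U) + U$ with $U(t) = \int_0^t u$: the $(y_1 - U) y_2$ piece is handled directly using $\|y_1 - U\|_{L^2((0,T), L^\infty)} \leq C \|u\|_{[H^{3/4}(0,T)]^*}$ (from \Cref{pro-y1}) combined with $\|y_2\|_{L^2((0,T) \times (0,1))} \leq C \|u\|_{[H^{5/4}(0,T)]^*} \|u\|_{[H^{3/4}(0,T)]^*}$. The $U y_2$ piece does not admit a direct spatial Sobolev bound, so I integrate by parts in time, transferring the derivative of $U$ back onto $u$ tested against a quantity whose time regularity is controlled by parabolic estimates on $y_2$ and the transfer result of \Cref{lem-Ueandu}, producing a factor $\|u\|_{[H^{5/4}(0,T)]^*}$ rather than the uncontrolled $\|U\|_{L^2(0,T)}$.

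The main obstacle will be achieving precisely the exponents $3/2, 3/2$. A direct product estimate yields only $\|u\|_{[H^{3/4}(0,T)]^*}^2 \|u\|_{[H^{5/4}(0,T)]^*}$, which is generally strictly larger than the target $\|u\|_{[H^{3/4}(0,T)]^*}^{3/2} \|u\|_{[H^{5/4}(0,T)]^*}^{3/2}$ because $\|u\|_{[H^{5/4}]^*} \leq \|u\|_{[H^{3/4}]^*}$; closing this gap requires a symmetric companion estimate in which the roles of $y_1$ and $y_2$ are interchanged (using the finer bound on $y_2$ itself obtained in \Cref{lem-y2small}) and an interpolation between the two, in the same spirit as the sharpness arguments already deployed in \Cref{pro-y1} and \Cref{lem-y2small}.
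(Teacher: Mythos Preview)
Your overall architecture is correct and matches the paper's: derive the equation for $\delta y$, use an energy estimate for \eqref{lem-deltay-1}, use \Cref{lem-B2} for \eqref{lem-deltay-2}, and split $y_1 = (y_1 - U) + U$. But the final paragraph, where you try to reach the exponents $(3/2,\,3/2)$, contains a genuine gap.

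Your ``symmetric companion estimate'' gives no new information. The direct bound places $y_1 - U$ in $L^2_t L^\infty_x$ (controlled by $\|u\|_{[H^{3/4}]^*}$) and $y_2$ in $L^2_{t,x}$ (controlled by $\|u\|_{[H^{5/4}]^*}\|u\|_{[H^{3/4}]^*}$); swapping roles puts $y_2$ in $L^2_t L^\infty_x \subset Y_T$ (controlled by $\|u\|_{[H^{3/4}]^*}^2$) and $y_1 - U$ in $L^2_{t,x}$ (controlled by $\|u\|_{[H^{5/4}]^*}$). Both give the \emph{same} product $\|u\|_{[H^{3/4}]^*}^2\|u\|_{[H^{5/4}]^*}$, so interpolating between them is vacuous. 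The missing idea is a \emph{single-factor} interpolation via the 1D Gagliardo--Nirenberg inequality $\|f\|_{L^\infty(0,1)} \le C\|f\|_{L^2(0,1)}^{1/2}\|f\|_{H^1(0,1)}^{1/2}$: applied to $f = y_1 - U$ and combined with H\"older in time with exponents $(4,4,2)$, this yields
\[
\|(y_1 - U)\,y_2\|_{L^1((0,T),L^2(0,1))} \le C\,\|y_1 - U\|_{L^2((0,T)\times(0,1))}^{1/2}\,\|y_1 - U\|_{L^2((0,T),H^1(0,1))}^{1/2}\,\|y_2\|_{L^2((0,T)\times(0,1))}.
\]
Now the crucial second estimate in \Cref{pro-y1}, namely $\|y_1 - U\|_{L^2((0,T)\times(0,1))} \le C\|u\|_{[H^{5/4}(0,T)]^*}$, supplies the missing half-power of $\|u\|_{[H^{5/4}]^*}$ and the exponents close. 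This is exactly how the paper proceeds.

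For the $U y_2$ term your proposed integration by parts in time is unnecessary. Since $U$ depends only on $t$, one has directly $\|U y_2\|_{L^1((0,T),L^2(0,1))} \le \|U\|_{L^2(0,T)}\|y_2\|_{L^2((0,T)\times(0,1))}$; the elementary bound $\|U\|_{L^2(0,T)} \le C\|u\|_{[H^1(0,T)]^*}$ together with the interpolation $\|u\|_{[H^1(0,T)]^*} \le C\|u\|_{[H^{5/4}(0,T)]^*}^{1/2}\|u\|_{[H^{3/4}(0,T)]^*}^{1/2}$ already gives the correct exponents.
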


\begin{proof}
Note that $\delta y \in Y_T$ and $\delta y$ satisfies
\begin{align}	\label{sys-deltay}
	\begin{cases}
		(\delta y)_t - (\delta y)_{xx}  = -yy_x + y_1 y_{1,x} \quad &\text{ in } (0, T) \times (0, 1), \\
		(\delta y)(t, 0) = (\delta y)(t, 1) = 0 &\text{ in } (0, T), \\
		(\delta y)(0, x) = y_0(x) &\text{ in } (0, 1).
	\end{cases}
\end{align}	
Since
\begin{multline}  \label{yyx-y1y1x}
	-y y_x + y_1 y_{1, x}  = -\frac{1}{2} \partial_x(y^2 - y_1^2) = -\frac{1}{2} \partial_x  \Big( (y-y_1) (y + y_1) \Big) \\
	= -\frac{1}{2} \partial_x \Big((y_2 + \delta y)(2y_1 + y_2 + \delta y) \Big), 
\end{multline}
it follows that 
\begin{align}	\label{delta-gx}
(\delta y)_t - (\delta y)_{xx} + (\delta y) (\delta y)_x = - g_x \quad \mbox{ in } (0, T) \times (0, 1),
\end{align}
where
\begin{align} \label{yyx-y1y1x-2}
g = y_1 y_2 + y_2^2/2  + (y_1 + y_2) \delta y. 
\end{align}
We derive from \eqref{delta-gx} and  \Cref{lem-ap1} in \Cref{wp-bg} that 
	\begin{align}
		\|\delta y\|_{Y_T} &\leq C \left\| g_x \right\|_{L^1((0, T), L^2(0, 1))} + C \|y_0\|_{L^2(0, 1)}.	\label{deltay-YT}
	\end{align}	
Here and throughout the proof, $C$ denotes a positive constant depending only on $T$, and the value of $C$ varies line by line. 

We have
	\begin{multline*}	
	\left\| g_x \right\|_{L^1((0, T), L^2(0, 1))} \leq \|g\|_{L^1((0, T), H^1(0, 1))} \\[6pt] \leq C \left(\|y_1\|_{L^2((0, T), H^1(0, 1))} + \|y_2\|_{L^2((0, T), H^1(0, 1))} \right) \left(\|y_2\|_{L^2((0, T), H^1(0, 1))}  + \|\delta y\|_{L^2((0, T), H^1(0, 1))} \right). 
	\end{multline*}
	This  implies 
	\begin{align}	\label{deltay-YT-2}
	\left\| g_x \right\|_{L^1((0, T), L^2(0, 1))} 		\leq C \left( \|y_1\|_{L^2((0, T), H^1(0, 1))} + \|y_2\|_{Y_T}\right) \left( \|y_2\|_{Y_T} + \|\delta y\|_{Y_T} \right). 
	\end{align}
Applying \Cref{pro-y1} to $y_1$ and using \Cref{lem-y2small}, we have 
\begin{align} \label{deltay-YT-12}
\|y_1\|_{L^2((0, T), H^1(0, 1))} \le C \|u\|_{[H^{\frac{3}{4}}(0, T)]^*} \quad \text{ and } \quad \|y_2\|_{Y_T} \le C \|u\|_{[H^{\frac{3}{4}}(0, T)]^*}^2. 
\end{align}
Combining  \eqref{deltay-YT}, \eqref{deltay-YT-2}, and \eqref{deltay-YT-12} yields 
\begin{align}	 
\|\delta y\|_{Y_T} \leq 	 C \left( \|u\|_{[H^{\frac{3}{4}}(0, T)]^*} +  \|u\|_{[H^{\frac{3}{4}}(0, T)]^*}^2 \right) \left(  \|u\|_{[H^{\frac{3}{4}}(0, T)]^*}^2  + \|\delta y\|_{Y_T} \right) + C \|y_0\|_{L^2(0, 1)}.
	\end{align}
Consequently, there exists $\eps_1 \in (0, 1)$ depending only on $T$ such that if
\begin{align}	\label{lem-deltay-3}
\|u\|_{[H^{\frac{3}{4}}(0, T)]^*}  \le \eps_1
\end{align}
(condition \eqref{lem-deltay-3} will be assumed from now on in this proof), 
then
\begin{align}\label{lem-deltay-YT-p1}
	\|\delta y\|_{Y_T} \leq C \|u\|^3_{[H^{\frac{3}{4}}(0, T)]^*}  + C \|y_0\|_{L^2(0, 1)}, 
\end{align}
which is \eqref{lem-deltay-1}.

\medskip	
	
We next deal with \eqref{lem-deltay-2}. It follows from 	\eqref{yyx-y1y1x} that
\begin{align}	\label{yyx-y1y1x-3}
	 -y y_x + y_1 y_{1, x} = - h_x \quad \text{ in } (0, T) \times (0, 1),
\end{align}
where
\begin{align}	\label{yyx-y1y1x-3'}
h = (y_1 - U) y_2 + U y_2 + y_2^2/2 + (y_1 + y_2) \delta y + (\delta y)^2/2.
\end{align}
We derive from \eqref{yyx-y1y1x-3} and \Cref{lem-B2} that 
	\begin{align} \label{deltay-L2}
		\|\delta y\|_{L^2((0, T) \times (0, 1))} \leq   
		 C \left\| h_x \right\|_{\left(C \left([0, T], H^1_0(0, 1) \right)  \right)^*} +  C \|y_0\|_{L^2(0, 1)}. 
	\end{align}
	
We have
\be \label{lem-dysmall-p1}
 \left\| h_x \right\|_{\left(C \left([0, T], H^1_0(0, 1) \right)  \right)^*}  \leq C \| h  \|_{L^1((0, T), L^2(0, 1))}.
\ee
Since (see, e.g., \cite[Lemma 8.1]{Nguyen}) 
$$
\| (y_1 - U) y_2\|_{L^1((0, T), L^2(0, 1))} \le C  \|y_1 - U\|_{L^2((0, T) \times (0, 1))}^{\frac{1}{2}}  \|y_1 - U\|_{L^2((0, T), H^1(0, 1))}^{\frac{1}{2}}  \|y_2\|_{L^2((0, T) \times (0, 1))}, 
$$
it follows from \eqref{yyx-y1y1x-3'} and \eqref{lem-dysmall-p1} that  
\begin{multline}\label{deltay-L2-1}
 \left\| h_x \right\|_{\left(C \left([0, T], H^1_0(0, 1) \right)  \right)^*}  \leq C  \|y_1 - U\|_{L^2((0, T) \times (0, 1))}^{\frac{1}{2}}  \|y_1 - U\|_{L^2((0, T), H^1(0, 1))}^{\frac{1}{2}}  \|y_2\|_{L^2((0, T) \times (0, 1))} \\[6pt]
  + C \|U\|_{L^2(0, T)} \|y_2\|_{L^2((0, T) \times (0, 1))}  +  C \|y_2\|_{Y_T}^{\frac{1}{2}} \|y_2\|_{L^2((0, T) \times (0, 1))}^{\frac{3}{2}} \\[6pt]
   + C \left( \|y_1\|_{L^2((0, T), H^1(0, 1))} + \|y_2\|_{Y_T} + \|  \delta y\|_{Y_T} \right) \| \delta y \|_{L^2((0, T) \times (0, 1))}. 
\end{multline}

Combining \eqref{deltay-L2} and \eqref{deltay-L2-1} yields
\begin{multline}	\label{deltay-L2-2}
	\|\delta y\|_{L^2((0, T) \times (0, 1))} \leq 		C   \|y_1 - U\|_{L^2((0, T) \times (0, 1))}^{\frac{1}{2}} \|y_1 - U\|_{L^2((0, T), H^1(0, 1))}^{\frac{1}{2}}  \|y_2\|_{L^2((0, T) \times (0, 1))} \\[6pt]
  + C \|U\|_{L^2(0, T)} \|y_2\|_{L^2((0, T) \times (0, 1))}  +  C \|y_2\|_{Y_T}^{\frac{1}{2}} \|y_2\|_{L^2((0, T) \times (0, 1))}^{\frac{3}{2}} \\[6pt]
   + C \left( \|y_1\|_{L^2((0, T), H^1(0, 1))} + \|y_2\|_{Y_T} + \|  \delta y\|_{Y_T} \right) \| \delta y \|_{L^2((0, T) \times (0, 1))}    + C \|y_0\|_{L^2(0, 1)}.
\end{multline}

As a consequence of \eqref{deltay-YT-12}, \eqref{lem-deltay-YT-p1}, and \eqref{deltay-L2-2}, there exists $\varepsilon \in (0, \varepsilon_1)$ depending only on $T$ such that if 
\begin{align}	\label{choose-epsilonT}
		\|y_0\|_{L^2(0, 1)} + \| u\|_{[H^{\frac{3}{4}}(0, T)]^*} \le \eps,
\end{align}
then 
\begin{multline} \label{lem-deltay-4}
\|\delta y\|_{L^2((0, T) \times (0, 1))} \leq C  \|y_1 - U\|_{L^2((0, T) \times (0, 1))}^{\frac{1}{2}}  \|y_1 - U\|_{L^2((0, T), H^1(0, 1))}^{\frac{1}{2}}  \|y_2\|_{L^2((0, T) \times (0, 1))} \\[6pt]
  + C \|U\|_{L^2(0, T)} \|y_2\|_{L^2((0, T) \times (0, 1))}  +  C \|y_2\|_{Y_T}^{\frac{1}{2}} \|y_2\|_{L^2((0, T) \times (0, 1))}^{\frac{3}{2}} + C \|y_0\|_{L^2(0, 1)}.
\end{multline}

Applying \Cref{pro-y1} to $y_1$ and using \Cref{lem-y2small}, we have
\begin{align}	\label{lem-deltay-5}
\|y_1 - U\|_{L^2((0, T) \times (0, 1))} \leq C \|u \|_{[H^{\frac{5}{4}}(0, T)]^*},
\end{align}
\begin{multline}	\label{lem-deltay-6}
\|y_1 - U\|_{L^2((0, T), H^1(0, 1))} \\[6pt]
\leq C \left(\|y_1 - U\|_{L^2((0, T) \times (0, 1))} + \|y_{1,x}\|_{L^2((0, T) \times (0, 1))}  \right) 
\leq C \|u\|_{[H^{\frac{3}{4}}(0, T)]^*},
\end{multline}
\begin{align}	\label{lem-deltay-7}
 \|y_2\|_{L^2((0, T) \times (0, 1))} \leq C \|u\|_{[H^{\frac{5}{4}} (0, T)]^*} \|u\|_{[H^{\frac{3}{4}}(0, T)]^*}, \quad \mbox{ and } \quad \|y_2\|_{Y_T} \le C \|u\|_{[H^{\frac{3}{4}}(0, T)]^*}^2. 
\end{align}
Since
\begin{align}	\label{lem-deltay-8}
	\|U \|_{L^2(0, T)} \leq C \|u\|_{[H^1(0, T)]^*} \leq C \|u\|_{[H^{\frac{5}{4}}(0, T)]^*}^{\frac{1}{2}}   \|u\|_{[H^{\frac{3}{4}}(0, T)]^*}^{\frac{1}{2}}, 
\end{align}
we deduce from \eqref{lem-deltay-4}, \eqref{lem-deltay-5}, \eqref{lem-deltay-6}, \eqref{lem-deltay-7}, and \eqref{lem-deltay-8} that
\begin{align} \label{lem-deltay-L2-p2}
\|\delta y\|_{L^2((0, T) \times (0, 1))} \leq C \| u\|_{[H^{\frac{5}{4}}(0, T)]^*}^{\frac{3}{2}}  \| u\|_{[H^{\frac{3}{4}}(0, T)]^*}^{\frac{3}{2}} +  C \|y_0\|_{L^2(0, 1)}. 
\end{align}

The conclusion now follows from \eqref{lem-deltay-YT-p1} and \eqref{lem-deltay-L2-p2}.  The proof is complete. 
\end{proof}

As a direct consequence of  \Cref{pro-y1}, \Cref{lem-y2small}, and \Cref{lem-dysmall}, we have the following estimates for $y$.

\begin{corollary}\label{cor-y} Let $T > 0$, $u \in L^1(0, T)$, and $y_0 \in L^2(0, 1)$. Let $y \in Y_T$ be the unique solution of  \eqref{M-y}. 
There exist positive constants $\varepsilon$ and $C$ depending only on $T$ such that, if 
\begin{align}
\|y_0\|_{L^2(0, 1)} + \|u\|_{[H^{\frac{3}{4}}(0, T)]^*} \leq \varepsilon, 
\end{align}
then
\begin{align}	\label{cor-y-1}
	\|y\|_{L^2((0, T), H^1(0, 1))} \leq C \Big( \|u\|_{[H^{\frac{3}{4}}(0, T)]^*} + \|y_0\|_{L^2(0, 1)} \Big),
\end{align}
and
\begin{align}	\label{cor-y-2}
	\|  y\|_{L^2((0, T) \times (0, 1))} \leq  C \Big( \|u\|_{[H^{1}(0, T)]^*} +  \|y_0\|_{L^2(0, 1)} \Big).
\end{align}
\end{corollary}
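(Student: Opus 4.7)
The plan is to exploit the decomposition $y = y_1 + y_2 + \delta y$ introduced in Section \ref{sub-sec-1st2ndorder} and apply the triangle inequality, combining the estimates already established for each piece (Proposition \ref{pro-y1} for $y_1$, Lemma \ref{lem-y2small} for $y_2$, and Lemma \ref{lem-dysmall} for $\delta y$). The smallness hypothesis on $\|u\|_{[H^{\frac{3}{4}}(0,T)]^*} + \|y_0\|_{L^2(0,1)}$ will be used to absorb the quadratic and cubic terms into linear ones.

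For the first inequality \eqref{cor-y-1}, I would write
\[
\|y\|_{L^2((0,T),H^1(0,1))} \le \|y_1\|_{L^2((0,T),H^1(0,1))} + \|y_2\|_{Y_T} + \|\delta y\|_{Y_T}
\]
and substitute the bounds
$\|y_1\|_{L^2((0,T),H^1(0,1))} \le C\|u\|_{[H^{\frac{3}{4}}(0,T)]^*}$,
$\|y_2\|_{Y_T} \le C\|u\|_{[H^{\frac{3}{4}}(0,T)]^*}^{2}$,
and
$\|\delta y\|_{Y_T} \le C\|u\|_{[H^{\frac{3}{4}}(0,T)]^*}^{3} + C\|y_0\|_{L^2(0,1)}$.
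Using $\|u\|_{[H^{\frac{3}{4}}(0,T)]^*} \le \varepsilon$ to bound $\|u\|_{[H^{\frac{3}{4}}(0,T)]^*}^{2}$ and $\|u\|_{[H^{\frac{3}{4}}(0,T)]^*}^{3}$ by $\varepsilon \|u\|_{[H^{\frac{3}{4}}(0,T)]^*}$ and $\varepsilon^{2} \|u\|_{[H^{\frac{3}{4}}(0,T)]^*}$ respectively, choosing $\varepsilon$ small enough, the sum reduces to $C(\|u\|_{[H^{\frac{3}{4}}(0,T)]^*} + \|y_0\|_{L^2(0,1)})$, which is \eqref{cor-y-1}.

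For the second inequality \eqref{cor-y-2}, the strategy is identical, but I would invoke the sharper $L^2$-estimates: Proposition \ref{pro-y1} gives $\|y_1\|_{L^2((0,T)\times(0,1))} \le C\|u\|_{[H^1(0,T)]^*}$; Lemma \ref{lem-y2small} gives $\|y_2\|_{L^2((0,T)\times(0,1))} \le C\|u\|_{[H^{\frac{5}{4}}(0,T)]^*}\|u\|_{[H^{\frac{3}{4}}(0,T)]^*}$; and Lemma \ref{lem-dysmall} gives $\|\delta y\|_{L^2((0,T)\times(0,1))} \le C\|u\|_{[H^{\frac{5}{4}}(0,T)]^*}^{3/2}\|u\|_{[H^{\frac{3}{4}}(0,T)]^*}^{3/2} + C\|y_0\|_{L^2(0,1)}$. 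The only subtlety is tracking the Sobolev dualities: since $H^{\frac{5}{4}}(0,T) \hookrightarrow H^1(0,T) \hookrightarrow H^{\frac{3}{4}}(0,T)$ continuously, one has $\|u\|_{[H^{\frac{5}{4}}(0,T)]^*} \le C\|u\|_{[H^1(0,T)]^*} \le C\|u\|_{[H^{\frac{3}{4}}(0,T)]^*}$. Using this embedding together with the smallness $\|u\|_{[H^{\frac{3}{4}}(0,T)]^*} \le \varepsilon$, both the $y_2$ and $\delta y$ contributions can be written in the form $C\varepsilon^{\alpha}\|u\|_{[H^1(0,T)]^*}$ with $\alpha > 0$ (specifically, $\alpha = 1$ for $y_2$ and $\alpha = 2$ for the main $\delta y$ contribution), which then get absorbed, yielding \eqref{cor-y-2}.

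I do not anticipate a genuine obstacle: the statement is truly a corollary, and the only bookkeeping concern is to keep the Sobolev-dual embedding directions straight and to verify that the smallness constant $\varepsilon$ from Lemma \ref{lem-dysmall} is compatible with (and can be further reduced to ensure) the absorption step. Once those routine checks are done, the two bounds follow immediately from the triangle inequality.
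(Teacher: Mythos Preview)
Your proposal is correct and matches the paper's approach exactly: the paper states this corollary as ``a direct consequence of \Cref{pro-y1}, \Cref{lem-y2small}, and \Cref{lem-dysmall}'' without further detail, and your decomposition $y = y_1 + y_2 + \delta y$ together with the triangle inequality and the smallness absorption is precisely how those three results combine to give \eqref{cor-y-1} and \eqref{cor-y-2}. The dual-embedding bookkeeping you outline (in particular $\|u\|_{[H^{5/4}(0,T)]^*} \le C\|u\|_{[H^1(0,T)]^*} \le C\|u\|_{[H^{3/4}(0,T)]^*}$) is the right way to reduce the higher-order terms to the linear ones.
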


We are in a position to prove \Cref{coercivity->obstruction}.
       
\subsection{Proof of Theorem \ref{coercivity->obstruction}} 	\label{sub-sec-mainproof}   

Let $\varepsilon_0 \in (0, 1)$ be arbitrarily small. Suppose by contradiction that there exist $\varepsilon \in (0, \varepsilon_0)$ and  $u \in L^1(0, T)$ with $\| u\|_{[H^{\frac{3}{4}}(0, T)]^*} < \varepsilon_0$ such that $y(T, \cdot) = 0$, where $y \in Y_T$ is the solution of \eqref{Burgersdetailed}. Extend $u(t)$ and $y (t, \cdot)$  by $0$ for $t > T$ and still denote the extensions by $u$ and $y$.  Then 
       \begin{align}   \label{Burgersdetailed2T}
            \begin{cases}
                y_t (t, x) - y_{xx} (t, x) + yy_x (t, x) = u(t) \quad &\text{ in } (0, +\infty) \times (0, 1), \\[6pt]
                y(t, 0) = y(t, 1) = 0 &\text{ in } (0, +\infty), \\[6pt]
                y(0, x) =  - \varepsilon \sin (k_0 \pi x) &\text{ in } (0, 1),
            \end{cases}
        \end{align}
and
\begin{align}	\label{y>T=0}
	y(t, \cdot) = 0 \mbox{ for } t \ge T. 
\end{align}
By integration by parts, we have that, for every $\tau \geq T$,
 \begin{align}	\label{step2-claim-neven}
           \int_0^{\tau} \int_0^1  y^2(t, x) \phi_{k_0, x}(t, x) dx dt = 	\varepsilon,      
        \end{align}
        where $\phi_{k_0}$ is defined in \eqref{def-phik}.

\medskip

    Let $y_1 \in C\left([0, +\infty), L^2(0, 1) \right) \cap L^2_{loc} \left((0, +\infty), H^1_0(0, 1) \right)$  be the solution of 
        \begin{align}	\label{step2-y1}
            \begin{cases}
                y_{1, t} (t, x) - y_{1, xx} (t, x) = u(t) \quad &\text{ in } (0, +\infty) \times (0, 1), \\[6pt]
                y_1(t,0) = y_1(t,1) = 0 &\text{ in } (0, +\infty), \\[6pt]
                y_1(0, x) = 0 &\text{ in } (0, 1).
            \end{cases}
        \end{align}             
      
It follows from \eqref{ipp-linear} that
        \begin{align}	\label{y12T-ortho}
        y_1 \left(2T, \cdot \right) \in \mathscr{M}^{\bot}, 
        \end{align}
where $\mathscr{M}$ is defined in \eqref{unreachable space}.  Using the moment method (see, e.g., \Cref{pro-heat-moment} in \Cref{sect-moment}), we deduce that there exists a real-valued control $u^* \in L^2(\mathbb{R})$ with $\mathrm{supp} \; u^* \subset [2T, 3T]$ such that 
        \begin{align}	\label{chooseu*}
            \|u^*\|_{L^2(2T, 3T)} \leq C \left\|y_1\left(2T, \cdot \right) \right\|_{L^2(0, 1)},
        \end{align}
        and 
        \begin{align}	\label{y1*(T)=0}
  			y_1^*(3T, \cdot) = 0,      
        \end{align}
 where $y_1^* \in C([2T, 3T], L^2(0, 1)) \cap L^2\left((2T,3T), H^1_0(0, 1) \right)$ is the solution of
        \begin{align}	\label{sys-y*}
        \begin{cases}
            y_{1, t}^*(t,x) - y_{1, xx}^*(t,x) = u^*(t) \quad &\text{ in } (2T, 3T) \times (0, 1), \\[6pt]
            y_{1}^*(t, 0) = y_1^*(t, 1) = 0 &\text{ in } (2T, 3T), \\[6pt]
            y_{1}^*(2T, x) = y_1(2T, x) &\text{ in } (0, 1). 
        \end{cases}
        \end{align}
Here and in what follows, $C$ denotes a positive constant depending only on $T$ and varies line by line.     
		
		\medskip
		
		Define 
		\begin{align}	\label{def-tildeu} 
			\tu = \begin{cases}
				u	 \quad 	&\text{ in } (0, 2T), \\[6pt]
				u^* \quad &\text{ in } (2T, 3T),
			\end{cases}		\quad \mbox{ and } \quad
			\ty = \begin{cases}
				y_1 \quad &\text{ in } [0, 2T] \times (0, 1), \\[6pt]
				y_1^* \quad &\text{ in } [2T, 3T] \times (0, 1).
			\end{cases}
		\end{align}		        
        Then $\tu \in L^1((0, 3T), \mR)$, and $\ty \in Y_{3T}$ is the solution of the system        
        \begin{align}	\label{tildey-1}
        \begin{cases}
            \ty_t (t, x) - \ty_{xx} (t, x)  = \tu(t) \quad &\text{ in } (0, 3T) \times (0, 1), \\[6pt]
            \ty(t, 0) = \ty(t, 1) = 0 &\text{ in } (0, 3T), \\[6pt]
            \ty(0, x) = 0 &\text{ in } (0, 1),
        \end{cases}
        \end{align}
     and, by \eqref{y1*(T)=0}, 
		\begin{align}        	
        	\ty(3T, \cdot) = 0.
		\end{align}				        	
        	 Applying \Cref{2obstruction} to $\ty$, we obtain
        \begin{align} 	\label{applyProp2.1}
            \int_{0}^{3T} \int_0^1 \ty^2(t, x) \phi_{k_0, x}(t, x) dx dt \leq - \alpha \|\tu \|^2_{[H^{\frac{5}{4}}(0, 3T)]^*},
        \end{align}
        where $\alpha$ is a fixed positive constant depending only on $T$.

\medskip

We claim that, for $\varepsilon_0$ sufficiently small, 
\begin{align}	\label{step4-claim}
	\varepsilon + \alpha \| \tu \|^2_{[H^{\frac{5}{4}}(0, 3T)]^*} \leq 	| I | + C\|u\|_{[H^1(0, T)]^*}^4 + C \varepsilon^2,  
\end{align}
where
\begin{align} \label{step4-I}
I  = \int_0^{2T} \int_0^1 \left(y^2(t, x) - y_1^2(t, x)\right) \phi_{k_0, x}(t, x) dx dt. 
\end{align}

Indeed, by \eqref{step2-claim-neven} and  \eqref{applyProp2.1}, 
\begin{align}	 \label{step4-p0}
\varepsilon + \alpha \|\tu \|_{[H^{\frac{5}{4}}(0, 3T)]^*}^2 \leq \int_0^{2T} \int_0^1 y^2(t, x) \phi_{k_0, x}(t, x) dx dt -   \int_{0}^{3T} \int_0^1 \ty^2(t, x) \phi_{k_0, x}(t, x)  dx dt.
\end{align}          
      Since
        \begin{multline*}
        		\int_0^{2T} \int_0^1 y^2(t, x) \phi_{k_0, x}(t, x) dx dt -   \int_{0}^{3T} \int_0^1 \ty^2(t, x) \phi_{k_0, x}(t, x)  dx dt \\
        		\mathop{=}^{\eqref{def-tildeu}} \int_0^{2T} \int_0^1 \left(y^2(t, x) - y_1^2(t, x)\right) \phi_{k_0, x}(t, x) dx dt  - \int_{2T}^{3T} \int_0^1 (y_1^*)^2(t, x) \phi_{k_0, x}(t, x) dx dt \\[6pt]
		= I - \int_{2T}^{3T} \int_0^1 (y_1^*)^2(t, x) \phi_{k_0, x}(t, x) dx dt, 
        \end{multline*}   
it follows from \eqref{step4-p0} that
\begin{align}
\label{step4-p1}
\varepsilon + \alpha \|\tu \|_{[H^{\frac{5}{4}}(0, 3T)]^*}^2 \leq   \left|I \right| + C \|y_1^* \|_{L^2((2T, 3T) \times (0, 1))}^2. 
\end{align}

We next estimate the last term of the right-hand side of \eqref{step4-p1}.  We have
\begin{align}	\label{step4-p3}
	\|y_1^*\|_{L^2((2T, 3T) \times (0, 1))}  \mathop{\leq}^{\eqref{sys-y*}} C \left( \|u^*\|_{L^2(2T, 3T)} + \|y_1(2T, \cdot) \|_{L^2(0, 1)} \right)  \mathop{\leq}^{\eqref{chooseu*}} C \|y_1(2T, \cdot) \|_{L^2(0, 1)}.
\end{align}
Since $y(t, \cdot)= 0$ for $t\ge T$, we have
\begin{align*} 
\| y_1(2T, \cdot)  \|_{L^2(0, 1)} = \| y_1(2T, \cdot)  - y(2T, \cdot) \|_{L^2(0, 1)}, 
\end{align*}
and, since $(y - y_1)_t - (y - y_1)_{xx} = -\partial_x(y^2)/2$, by \Cref{lem-fx-L1} in \Cref{wp-bg}, we obtain
\begin{align*} 
\| y_1(2T, \cdot)  - y(2T, \cdot) \|_{L^2(0, 1)} \le  C \Big( \eps + \| y^2\|_{L^1((0, T) \times (0, 1))}  \Big). 
\end{align*}
It follows that 
\begin{align}  \label{step4-p4}
\| y_1(2T, \cdot)  \|_{L^2(0, 1)} \le C \Big( \eps + \| y^2\|_{L^1((0, T) \times (0, 1))}  \Big). 
\end{align}
   
        Combining \eqref{step4-p3} and \eqref{step4-p4} yields 
 \begin{align}	\label{step4-p2}
	\|y_1^*\|^2_{L^2((2T, 3T) \times (0, 1))} \leq C \Big( \|y\|^4_{L^2((0, T) \times (0, 1))} + \eps^2 \Big).
\end{align}   
Using the fact that (see \Cref{cor-y})
         \begin{align*}
\|y\|_{L^2((0, T) \times (0, 1))} \leq C \|u\|_{[H^1(0, T)]^*} + C \varepsilon, 
       \end{align*}
we obtain  \eqref{step4-claim} from  \eqref{step4-p1} and
\eqref{step4-p2}.  
        
\medskip

Let $y_2 \in Y_{2T}$ be the solution of
\begin{align}
	    \begin{cases}
                y_{2,t} - y_{2, xx} = -y_1 y_{1,x}  \quad &\text{ in } (0, 2T) \times (0, 1), \\[6pt]
                y_2(t, 0) = y_2(t, 1) = 0 &\text{ in } (0, 2T), \\[6pt]
                y_2(0, x) = 0 &\text{ in } (0, 1),
            \end{cases}
\end{align}
and set
\begin{align}	\delta y = y - y_1 - y_2 \quad \text{ in } (0, 2T) \times (0, 1).
\end{align}    
Then $\delta y \in Y_{2T}$. Since $\mbox{supp } u \subset [0, T]$, we have
	\begin{align}	\label{proof-y1}
			\|y_1 \|_{L^2((0, 2T) \times (0, 1))} \mathop{\leq}^{\Cref{pro-y1}} C \|u\|_{[H^1(0, 2T)]^*} \leq C \|u\|_{[H^1(0, T)]^*}, 
		\end{align}
		and 
           \begin{align}	\label{proof-y2}
           		\|y_2 \|_{L^2((0, 2T) \times (0, 1))} \mathop{\leq}^{\Cref{lem-y2small}} C \|u\|_{[H^{\frac{5}{4}}(0, 2T)]^*} \|u\|_{[H^{\frac{3}{4}}(0, 2T)]^*} \le  C \|u\|_{[H^{\frac{5}{4}}(0, T)]^*} \|u\|_{[H^{\frac{3}{4}}(0, T)]^*}.
           \end{align}
For $\varepsilon_0$ sufficiently small, we have 
           \begin{multline}	\label{proof-deltay}
           		 \|\delta y \|_{L^2((0, 2T) \times (0, 1))} \mathop{\leq}^{\Cref{lem-dysmall}} C  \|u\|_{[H^{\frac{5}{4}}(0, 2T)]^*}^{\frac{3}{2}} \|u\|_{[H^{\frac{3}{4}}(0, 2T)]^*}^{\frac{3}{2}} + C \varepsilon \\[6pt]
		\leq C  \|u\|_{[H^{\frac{5}{4}}(0, T)]^*}^{\frac{3}{2}} \|u\|_{[H^{\frac{3}{4}}(0, T)]^*}^{\frac{3}{2}} + C \varepsilon.
           \end{multline}
     
     We next estimate $| I |$ where $I$ is defined in \eqref{step4-I}.    
Since
$$
y^2 - y_1^2 = \left(y - y_1\right)^2 + 2y_1 (y - y_1) = 2y_1 y_2  + \left(y_2 + \delta y \right)^2 +  2y_1 \delta y, 
$$
it follows that   
        \begin{align}
          I = I_1 + I_2 + I_3, \label{rewriteI}
        \end{align}             
        where
\begin{align*}
	I_1 = 2 \int_0^{2T} \int_0^1 y_1 y_2 \phi_{k_0, x} dx dt, \; \; I_2 = \int_0^{2T} \int_0^1 \left(y_2 + \delta y \right)^2 \phi_{k_0, x} dx dt,\; \;   I_3 = 2 \int_0^{2T} \int_0^1 y_1 \delta y \phi_{k_0, x} dx dt. 
\end{align*}  
      
We next estimate $I_1, I_2, I_3$. We first deal with $I_1$. By definition of $y_1$ and $y_2$, we have
        \begin{align}	
        y_1(t, \cdot) = y_1(t, 1 - \cdot)  \quad \mbox{ and } \quad y_2(t, \cdot) = - y_2 (t, 1- \cdot) \quad \mbox{ for } t \in (0, 2T).
        \end{align}
        Since $k_0$ is even, it follows that
        $$
        \int_0^1 y_1 (t, x) y_2(t, x) \cos (k_0 \pi x)  dx = 0 \quad \mbox{ for } t \in (0, 2T), 
        $$
which yields         
\begin{align}	\label{In2}
 I_1 = 0. 
        \end{align}        

Concerning $I_2$, we have
        \begin{align}   \label{In1}
           |I_2| \leq C \| y_2 + \delta y \|^2_{L^2((0, 2T)\times (0, 1))} 
            \mathop{\leq}^{ \eqref{proof-y2}, \eqref{proof-deltay}} C \|u \|_{[H^{\frac{5}{4}}(0, T)]^*}^2 \|u \|_{[H^{\frac{3}{4}}(0, T)]^*}^2 + C \varepsilon^2.
        \end{align}    

Finally, we estimate $I_3$. We have
\begin{multline}
	|I_3| \leq C \|y_1\|_{L^2((0, 2T) \times (0, 1))} \|\delta y\|_{L^2((0, 2T) \times (0, 1))} \\
	\overset{\eqref{proof-y1}, \eqref{proof-deltay}}{\leq} C  \|u\|_{[H^1(0, T)]^*}   \|u\|_{[H^{\frac{5}{4}}(0, T)]^*}^{\frac{3}{2}} \|u\|_{[H^{\frac{3}{4}}(0, T)]^*}^{\frac{3}{2}} + C \varepsilon \|u\|_{[H^1(0, T)]^*}. \label{In3-1}
\end{multline}
Since
\begin{align}	\label{In2interpolation}
	\|u\|_{[H^1(0, T)]^*} \leq C \|u\|_{[H^{\frac{5}{4}}(0, T)]^*}^{\frac{1}{2}} \|u\|_{[H^{\frac{3}{4}}(0, T)]^*}^{\frac{1}{2}}, 
\end{align}
we deduce from \eqref{In3-1} and \eqref{In2interpolation} that 
	\begin{align}	\label{In3-2}
	|I_3| \leq  C \|u\|_{[H^{\frac{5}{4}}(0, T)]^*}^2 \|u\|_{[H^{\frac{3}{4}}(0, T)]^*}^2 + C \varepsilon \|u\|_{[H^1(0, T)]^*}.
	\end{align}

Combining \eqref{rewriteI}, \eqref{In2}, \eqref{In1}, and \eqref{In3-2}, we obtain \begin{align} \label{Iny-y1}
   |I| \leq C \|u\|_{[H^{\frac{5}{4}}(0, T)]^*}^2 \|u\|_{[H^{\frac{3}{4}}(0, T)]^*}^2 + C \varepsilon \|u\|_{[H^1(0, T)]^*} + C \eps^2. 
    \end{align}

  From \eqref{step4-claim} and \eqref{Iny-y1}, we obtain, for $\varepsilon_0$ sufficiently small,
    \begin{multline} \label{step-final-p1}
      \varepsilon + \alpha \| \tu \|^2_{[H^{\frac{5}{4}}(0, 3T)]^*} \\[6pt]
      \le  C \|u\|_{[H^{\frac{5}{4}}(0, T)]^*}^2 \|u\|_{[H^{\frac{3}{4}}(0, T)]^*}^2 + C \varepsilon \|u\|_{[H^1(0, T)]^*} + C \eps^2 + C  \|u\|_{[H^1(0, T)]^*}^4. 
    \end{multline}
    Since $\| u\|_{[H^{\frac{3}{4}}(0, T)]^*} < \eps_0$, it follows from \eqref{In2interpolation} and \eqref{step-final-p1} that
\be
\eps +  \| \tu \|_{[H^{\frac{5}{4}}(0, 3T)]^*}^2 \le C   \|u\|_{[H^{\frac{5}{4}}(0, T)]^*}^2 \|u\|_{[H^{\frac{3}{4}}(0, T)]^*}^2.
\ee
Since $\|u\|_{[H^{\frac{5}{4}}(0,T)]^*} \leq C \|\tu \|_{[H^{\frac{5}{4}}(0, 3T)]^*}$ by \eqref{def-tildeu}, it follows that 
\be
\eps + \| \tu \|_{[H^{\frac{5}{4}}(0, 3T)]^*}^2 \le C \| \tu \|_{[H^{\frac{5}{4}}(0, 3T)]^*}^2 \| u \|_{[H^{\frac{3}{4}}(0, T)]^*}^2 \le C \eps_0^2 \| \tu \|_{[H^{\frac{5}{4}}(0, 3T)]^*}^2, 
\ee
which yields, since $\eps_0$ is sufficiently small, 
\begin{align}
\eps = 0.
\end{align}
We have a contradiction. 

\medskip
The proof of  \Cref{coercivity->obstruction} is complete. \qed

\appendix

\section{On the heat equation and the Burgers equation}
 \label{wp-bg} 
 \label{controlheatu(t)}

In this section, we recall basic facts on the linear heat equation and the Burgers equation. We first deal with the heat equation, for $T>0$, $f \in L^1((0, T), L^2(0, 1))$, and $y_0 \in L^2(0, 1)$: 
\begin{align}	
	\begin{cases}	\label{ap-heat-sys}
		y_t - y_{xx} = f \quad &\text{ in } (0, T) \times (0, 1), \\[6pt]
		y(t, 0) = y(t, 1) = 0 &\text{ in } (0, T), \\[6pt]
		y(0, x) = y_0(x) &\text{ in } (0, 1).
	\end{cases}
\end{align}

We recall the space $Y_T$ defined in \eqref{YT}. In what follows, we also consider the space
\begin{align*}
	X_T = C([0, T], H^1_0(0, 1)) \cap L^2((0, T), H^2(0, 1)),
\end{align*}
equipped with the norm
\begin{align*}
	 \|y\|_{X_T} = \|y\|_{ C([0, T], H^1_0(0, 1))} + \|y\|_{ L^2((0, T), H^2(0, 1))}.
\end{align*}

The following standard definition of weak solutions of \eqref{ap-heat-sys} is used. 

\begin{definition}	\label{def-heat-weaksolution}	Let $T > 0$, $f \in L^1((0, T), L^2(0, 1))$, and $y_0 \in L^2(0, 1)$. A function $y \in Y_T$ is a solution of \eqref{ap-heat-sys} if for every $\tau \in [0, T]$ and $\phi \in X_T \cap H^1((0, T) \times (0, 1))$,
\begin{multline} 	\label{heat-weaksolution}
	\int_0^1 y(\tau, x) \phi(\tau, x) dx - \int_0^1 y_0(x) \phi(0, x) dx - \int_0^{\tau} \int_0^1 y(t, x) \; (\phi_t + \phi_{xx} )(t, x) \; dx dt \\
	 = \int_0^{\tau} \int_0^1 f(t, x) \phi(t, x) dx dt.
\end{multline}
\end{definition}

\begin{remark}	\label{rem-strongsol=>weaksol} \rm
Using the integration by parts formula, one can check that if $y$ is smooth enough and satisfies \eqref{ap-heat-sys} in the classical sense, then $y$ is a solution of \eqref{ap-heat-sys} in the sense of  \Cref{def-heat-weaksolution}. 
\end{remark}

\medskip 
The following result on \eqref{ap-heat-sys} is standard.  

\begin{lemma}\label{ap-heat-fx}
Let $T > 0$, $f  \in L^1((0, T), L^2(0, 1))$, and $y_0 \in L^2(0, 1)$. Then  system \eqref{ap-heat-sys} has a unique solution $y \in Y_T$. Moreover, there exists a positive constant $C$ depending only on $T$ such that
\begin{align}	\label{ap-heat-fx-L2Linfty}
\|y\|_{Y_T} \leq C \left(\|f\|_{L^1((0, T), L^2(0, 1))} + \|y_0\|_{L^2(0, 1)} \right).
\end{align}
In addition, if $f \in L^2((0, T) \times (0, 1))$ and $y_0 \in H^1_0(0, 1)$, then $y \in X_T$. Moreover, there exists a positive constant $C$ depending only on $T$ such that
\begin{align}	\label{ap-heat-fx-XT}
	\|y\|_{X_T} \leq C \left(\|f\|_{L^2((0, T) \times (0, 1))} + \|y_0\|_{H^1_0(0, 1)} \right).
\end{align}
\end{lemma}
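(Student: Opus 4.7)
The plan is to combine the classical energy method with a Galerkin approximation in the Dirichlet eigenbasis $\{e_k\}$ of \eqref{def-ek} to simultaneously establish existence, uniqueness, and both estimates. Because of the spectral structure, the projected system is solved explicitly by Duhamel's formula, so no nonstandard technical ingredient is needed.

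For \eqref{ap-heat-fx-L2Linfty}, I would first derive it as an a priori bound. Testing the equation against $y$ and using the homogeneous Dirichlet conditions yields
\begin{align*}
\frac{1}{2}\frac{d}{dt}\|y(t,\cdot)\|_{L^2(0,1)}^2 + \|y_x(t,\cdot)\|_{L^2(0,1)}^2 = \int_0^1 f(t,x)\,y(t,x)\,dx,
\end{align*}
so that $\frac{d}{dt}\|y(t,\cdot)\|_{L^2(0,1)} \le \|f(t,\cdot)\|_{L^2(0,1)}$ by Cauchy-Schwarz. Integrating controls $\|y\|_{C([0,T], L^2(0,1))}$ by $\|y_0\|_{L^2(0,1)} + \|f\|_{L^1((0,T), L^2(0,1))}$. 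Substituting this bound back into the integrated energy identity then bounds $\|y_x\|_{L^2((0,T) \times (0,1))}$, giving \eqref{ap-heat-fx-L2Linfty}. The same estimate applied to the difference of two solutions yields uniqueness.

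For existence, I would introduce the Galerkin approximation $y_N(t, x) = \sum_{k=1}^N a_k^N(t)\,e_k(x)$, where the projected ODEs decouple and are solved explicitly:
\begin{align*}
a_k^N(t) = e^{-k^2\pi^2 t}\langle y_0, e_k\rangle_{L^2(0,1)} + \int_0^t e^{-k^2\pi^2(t-s)}\langle f(s,\cdot), e_k\rangle_{L^2(0,1)}\,ds.
\end{align*}
The a priori bound passes uniformly to $y_N$, and a Cauchy-sequence argument in $Y_T$ produces a limit $y \in Y_T$ that satisfies the weak formulation of \Cref{def-heat-weaksolution}. For the improved estimate \eqref{ap-heat-fx-XT} under the stronger hypotheses, I would test the equation at the Galerkin level (where $y_N$ is smooth in $x$) against $-y_{N,xx}$ to obtain
\begin{align*}
\frac{1}{2}\frac{d}{dt}\|y_{N,x}(t,\cdot)\|_{L^2(0,1)}^2 + \|y_{N,xx}(t,\cdot)\|_{L^2(0,1)}^2 \le \tfrac{1}{2}\|f(t,\cdot)\|_{L^2(0,1)}^2 + \tfrac{1}{2}\|y_{N,xx}(t,\cdot)\|_{L^2(0,1)}^2,
\end{align*}
absorb the second-derivative term on the left, and integrate in time; passing to the limit yields \eqref{ap-heat-fx-XT}. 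The only mildly delicate point is justifying the integration-by-parts steps rigorously, but this is automatic at the Galerkin level since $y_N$ is smooth in $x$, and the bounds pass to the limit by lower semicontinuity of norms, so I do not anticipate any real obstacle.
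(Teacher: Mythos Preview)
Your proposal is correct and follows essentially the same energy-method approach as the paper. The paper's proof only writes out the a priori bound \eqref{ap-heat-fx-L2Linfty} for smooth data (multiplying by $y$, integrating, and absorbing via Young's inequality rather than your equivalent $\tfrac{d}{dt}\|y\|_{L^2}\le\|f\|_{L^2}$ argument) and simply cites \cite[Section 7.1]{Evans} for existence, uniqueness, and \eqref{ap-heat-fx-XT}; your Galerkin sketch merely fills in those standard details.
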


\begin{proof} Assertion \eqref{ap-heat-fx-XT} is known, see, e.g., \cite[Section 7.1]{Evans}.  Assertion \eqref{ap-heat-fx-L2Linfty} is quite standard but we cannot find a reference for it;  many standard references consider $\|f\|_{L^2((0, T), L^2(0, 1))}$ instead of  $\|f\|_{L^1((0, T), L^2(0, 1))}$, see, e.g., \cite[Section 7.1]{Evans}.  The proof is almost the same. For the convenience of the readers, we show \eqref{ap-heat-fx-L2Linfty} when $f$, $y_0$, and $y$ are smooth enough. For every $\tau \in [0, T]$, we multiply \eqref{ap-heat-sys} by $y$ and integrate by parts on $(0, \tau) \times (0, 1)$ to obtain 
\begin{align} \label{lem-ap1-p1}
\frac{1}{2} \left(\int_0^1 y^2(\tau, x) dx - \int_0^1 y_0^2(x) dx \right) + \int_0^{\tau} \int_0^1 y^2_x(t, x) dx dt = \int_0^{\tau} \int_0^1 f(t, x) y(t, x) dx dt.
\end{align}
This implies 
	\begin{align*}
		\|y\|_{C([0, T], L^2(0, 1))}^2 &\leq \|y_0\|^2_{L^2(0, 1)} + 2 \|f\|_{L^1((0, T), L^2(0, 1))}  \|y\|_{C([0, T], L^2(0, 1))} \\
		&\leq \|y_0\|_{L^2(0, 1)}^2 + \frac{1}{2} \|y\|^2_{C([0, T], L^2(0, 1))} + 2 \|f\|_{L^1((0, T), L^2(0, 1))}^2 , 
	\end{align*}
	so
	\begin{align}	\label{lem-yC0L2}
		\|y\|_{C([0, T], L^2(0, 1))} \leq 2 \left(\|y_0\|_{L^2(0, 1)} + \|f\|_{L^1((0, T), L^2(0, 1))}  \right).
	\end{align}
Combining \eqref{lem-ap1-p1} and \eqref{lem-yC0L2}, we obtain 
\begin{align*}
\| y_x \|_{L^2((0, T) \times (0, 1)}^2 &\leq  \frac{1}{2} \|y_0\|^2_{L^2(0, 1)} + 2\|f\|_{L^1((0, T), L^2(0, 1))}   \left(\|y_0\|_{L^2(0, 1)} +\|f\|_{L^1((0, T), L^2(0, 1))}  \right)  \\
		&\leq 2 \left(\|y_0\|_{L^2(0, 1)} + \|f\|_{L^1((0, T), L^2(0, 1))}  \right)^2, 
\end{align*}
so
\begin{align}
	\|y\|_{L^2((0, T), H^1_0(0, 1))} \leq  2 \left(\|y_0\|_{L^2(0, 1)} + \|f\|_{L^1(0, T), L^2(0, 1)} \right).  \label{lem-L2H10} 
\end{align}
Estimate \eqref{ap-heat-fx-L2Linfty} now follows from \eqref{lem-yC0L2} and \eqref{lem-L2H10}.  
\end{proof}

The following lemma, which estimates the $L^2$ norm of the solution $y$ of \eqref{ap-heat-sys}, is a consequence of \eqref{ap-heat-fx-XT}. 

\begin{lemma} \label{lem-B2}
Let $T > 0$, $f \in L^1((0, T), L^2(0, 1))$, and $y_0 \in L^2(0, 1)$. Let $y \in Y_T$ be the unique solution of \eqref{ap-heat-sys}. Then there exists a positive constant $C$ depending only on $T$ such that
\begin{align}	\label{ap-heat-fx-L2}
	\|y\|_{L^2((0, T) \times (0, 1))} \leq C  \left(\|f\|_{\left(C \left([0, T], H^1_0(0, 1) \right)  \right)^*} + \|y_0\|_{H^{-1}(0, 1)} \right). 
\end{align}

\end{lemma}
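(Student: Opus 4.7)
The plan is to proceed by duality, pairing $y$ against an arbitrary $\phi \in L^2((0,T) \times (0,1))$ and testing the weak formulation of \eqref{ap-heat-sys} with the adjoint backward heat solution driven by $\phi$. Since $\|y\|_{L^2((0,T) \times (0,1))} = \sup \{ \int_0^T \int_0^1 y\phi\,dx\,dt : \|\phi\|_{L^2} \le 1 \}$, it suffices to bound this pairing by $\|\phi\|_{L^2}$ times the desired right-hand side.

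For a given $\phi \in L^2((0,T)\times(0,1))$, I would introduce $z$ as the unique solution of the backward adjoint system
\begin{align*}
\begin{cases}
-z_t - z_{xx} = \phi & \text{in } (0,T) \times (0,1), \\
z(t,0) = z(t,1) = 0 & \text{in } (0,T), \\
z(T,x) = 0 & \text{in } (0,1).
\end{cases}
\end{align*}
Setting $w(t,x) = z(T-t,x)$ reduces this to a forward heat equation with zero initial datum and $L^2$ source, so \Cref{ap-heat-fx} (specifically \eqref{ap-heat-fx-XT}) gives $z \in X_T$ with $\|z\|_{X_T} \le C \|\phi\|_{L^2((0,T)\times(0,1))}$. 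Moreover, $z_t = -z_{xx} - \phi \in L^2((0,T)\times(0,1))$, so $z$ qualifies as a test function in \Cref{def-heat-weaxsolution}, i.e., $z \in X_T \cap H^1((0,T)\times(0,1))$.

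Applying \eqref{heat-weaksolution} with $\tau = T$ and test function $z$, and using $z(T,\cdot) = 0$ together with $z_t + z_{xx} = -\phi$, I obtain the identity
\begin{align*}
\int_0^T\!\!\int_0^1 y\,\phi\,dx\,dt \;=\; \int_0^1 y_0(x)\, z(0,x)\,dx \;+\; \int_0^T\!\!\int_0^1 f\, z\,dx\,dt.
\end{align*}
The first term is bounded by $\|y_0\|_{H^{-1}(0,1)}\|z(0,\cdot)\|_{H^1_0(0,1)}$, and since $z \in C([0,T],H^1_0(0,1))$, the second term is bounded by $\|f\|_{(C([0,T],H^1_0(0,1)))^*}\|z\|_{C([0,T],H^1_0(0,1))}$. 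Both factors involving $z$ are controlled by $\|z\|_{X_T} \le C \|\phi\|_{L^2}$, which yields the desired estimate after taking the supremum over $\phi$.

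The argument is essentially soft once the adjoint solution is set up, so there is no real obstacle; the one point to check carefully is that $z$ genuinely lies in the admissible class of test functions in \Cref{def-heat-weaxsolution}, which reduces to verifying $z \in X_T \cap H^1((0,T)\times(0,1))$. This is immediate from $X_T$-regularity of $z$ combined with the pointwise identity $z_t = -z_{xx} - \phi \in L^2$.
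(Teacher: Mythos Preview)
Your proposal is correct and follows exactly the approach the paper has in mind: the paper does not write out a proof of \Cref{lem-B2} but states it ``is a consequence of \eqref{ap-heat-fx-XT}'', i.e., of the $X_T$-regularity estimate, and your duality argument using the backward adjoint solution $z\in X_T$ is precisely how one extracts \eqref{ap-heat-fx-L2} from that estimate. (One typo: your second reference to \Cref{def-heat-weaksolution} is misspelled.)
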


One has the following result when the source term $f$ is supported in $[0, T/2] \times [0, 1]$.

 \begin{lemma} \label{lem-fx-L1} Let $T > 0$,  $f \in L^1((0, T),  H^1(0, 1))$, and $y_0 \in L^2(0, 1)$. Suppose that $\mbox{supp } f \subset [0, T/2] \times [0, 1]$. Let $y \in Y_T$ be the unique solution of the system 
 \begin{align}
	    \begin{cases}
                y_{t}(t, x) - y_{xx} (t, x)= f_{x}(t, x)  \quad &\text{ in } (0, T) \times (0, 1), \\
                y(t, 0) = y(t, 1) = 0 &\text{ in } (0, T), \\
                y(0, x) = y_0(x)  &\text{ in } (0, 1).
            \end{cases}
\end{align}
Then there exists a positive constant $C$ depending only on $T$ such that
$$
\| y(T, \cdot)\|_{L^2(0, 1)} \le C \Big( \| f \|_{L^1((0, T) \times (0, 1))} + \| y_0 \|_{L^2(0, 1)} \Big). 
$$
\end{lemma}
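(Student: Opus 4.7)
The plan is to split $y = y^{(1)} + y^{(2)}$, where $y^{(1)} \in Y_T$ solves the homogeneous heat equation on $(0,T)\times(0,1)$ with initial datum $y_0$ (and zero Dirichlet boundary data), and $y^{(2)} \in Y_T$ solves the inhomogeneous problem with source $f_x$ and zero initial datum. For $y^{(1)}$, the standard energy estimate (equivalently \Cref{ap-heat-fx} with $f \equiv 0$) gives $\|y^{(1)}(T,\cdot)\|_{L^2(0,1)} \le \|y_0\|_{L^2(0,1)}$. Thus it remains to bound $\|y^{(2)}(T,\cdot)\|_{L^2(0,1)}$ solely in terms of $\|f\|_{L^1((0,T)\times(0,1))}$, where the key point is that we must \emph{not} pay a factor of $\|f_x\|$.

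To do this, I would expand $y^{(2)}$ in the Dirichlet eigenbasis $e_k(x) = \sqrt{2}\sin(k\pi x)$, writing
\begin{align*}
y^{(2)}(t,x) = \sum_{k \ge 1} a_k(t)\, e_k(x), \quad a_k(t) = \int_0^t e^{-\pi^2 k^2 (t-s)} \langle f_x(s,\cdot), e_k \rangle_{L^2(0,1)}\, ds.
\end{align*}
Since $e_k$ vanishes at $0$ and $1$, integration by parts in $x$ yields
\begin{align*}
\langle f_x(s,\cdot), e_k \rangle_{L^2(0,1)} = -\sqrt{2}\, k\pi \int_0^1 f(s,x) \cos(k\pi x)\, dx,
\end{align*}
so $|\langle f_x(s,\cdot), e_k\rangle| \le \sqrt{2}\, k\pi\, \|f(s,\cdot)\|_{L^1(0,1)}$. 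This is where the support hypothesis enters: because $\mathrm{supp}\, f \subset [0,T/2]\times[0,1]$, the $s$-integral in $a_k(T)$ is actually over $[0,T/2]$, and on this set $T-s \ge T/2$. Hence
\begin{align*}
|a_k(T)| \le \sqrt{2}\, k\pi\, e^{-\pi^2 k^2 T/2} \|f\|_{L^1((0,T)\times(0,1))}.
\end{align*}

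By Parseval we obtain
\begin{align*}
\|y^{(2)}(T,\cdot)\|_{L^2(0,1)}^2 = \sum_{k \ge 1} |a_k(T)|^2 \le 2\pi^2 \|f\|_{L^1((0,T)\times(0,1))}^2 \sum_{k \ge 1} k^2 e^{-\pi^2 k^2 T},
\end{align*}
and the series $\sum_{k\ge 1} k^2 e^{-\pi^2 k^2 T}$ converges to a constant $C_T < +\infty$. Combining with the bound on $y^{(1)}$ gives the desired estimate. The only subtle point is the interplay between the factor $k\pi$ coming from the integration by parts (that is, from replacing $f_x$ by $f$) and the Gaussian damping $e^{-\pi^2 k^2 T/2}$ inherited from the $[0,T/2]$ support; the heat smoothing on $[T/2,T]$ more than absorbs the polynomial growth, which is the real content of the lemma. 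A brief approximation argument (mollifying $f$ and passing to the limit via the a priori bound) handles the case where $f$ is merely in $L^1((0,T),H^1(0,1))$ so that the eigenfunction computation is rigorously justified.
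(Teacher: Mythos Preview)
Your proof is correct and follows essentially the same approach as the paper's: reduce to zero initial data (the paper does this by invoking \Cref{ap-heat-fx} to assume $y_0=0$ without loss of generality, you do it by an explicit splitting), expand in the Dirichlet sine basis, integrate by parts to trade $f_x$ for $f$ at the cost of a factor $k\pi$, use the support hypothesis $\mathrm{supp}\,f\subset[0,T/2]\times[0,1]$ to extract the decay $e^{-\pi^2 k^2 T/2}$, and then sum the convergent series $\sum_{k\ge 1} k^2 e^{-\pi^2 k^2 T}$. The computations and the final bound coincide.
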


\begin{proof} By \Cref{ap-heat-fx}, without loss of generality, one can assume that $y_0 = 0$. Then 
\begin{align*}
y(T, x) = - \sum_{k = 1}^{+\infty} \left( \sqrt{2} k \pi e^{-  \pi^2 k^2 T} \int_0^T \int_0^1 f(t, y) e^{\pi^2 k^2 t} \cos (k \pi y) \, dy \, dt \right) \sqrt{2} \sin (k \pi x) \mbox{ in } (0, 1). 
\end{align*}
This implies 
\begin{align*}
\| y(T, \cdot) \|_{L^2(0, 1)}^2 \le C \sum_{k = 1}^{+\infty} k^2 e^{- 2 \pi^2 k^2 T} \left| \int_0^T \int_0^1 f(t, y) e^{\pi^2 k^2 t} \cos (k \pi y) \, dy \, dt \right|^2. 
\end{align*}
Since $\mathrm{supp} \; f \subset [0, T/2] \times [0, 1]$, we derive that 
\begin{align*}
\| y(T, \cdot) \|_{L^2(0, 1)}^2 \leq C \left( \sum_{k = 1}^{+\infty} k^2 e^{- \pi^2 k^2 T} \right) \| f\|_{L^1((0, T) \times (0, 1))}^2 \leq C \|f\|^2_{L^1((0, T) \times (0, 1))}. 
\end{align*}
The conclusion follows. 
\end{proof}

 We next deal with the Burgers equation, for $T > 0$, $f \in L^1((0, T), L^2(0, 1))$, and $y_0 \in L^2(0, 1)$: 
\begin{align}		\label{ap-BG-sys}
	\begin{cases}
		y_t - y_{xx} + yy_x = f \quad &\text{ in } (0, T) \times (0, 1), \\[6pt]
		y(t, 0) = y(t, 1) = 0 &\text{ in } (0, T), \\[6pt]
		y(0, x) = y_0(x) &\text{ in } (0, 1).
	\end{cases}
\end{align}
Similar to \Cref{def-heat-weaksolution}, in this paper, we use the following standard definition of weak solutions of \eqref{ap-BG-sys}.

\begin{definition} \label{ap-def-sol-BG-0}	Let $T > 0$, $f \in L^1((0, T), L^2(0, 1))$, and $y_0 \in L^2(0, 1)$. A function $y \in Y_T$ is a solution of \eqref{ap-BG-sys} if for every $\tau \in [0, T]$ and $\phi \in X_T \cap H^1((0, T) \times (0, 1))$,
\begin{multline}	\label{ap-def-sol-BG}
	\int_0^1 y(\tau, x) \phi(\tau, x) dx - \int_0^1 y_0(x) \phi(0, x) dx - \int_0^{\tau} \int_0^1 y(t, x) \; (\phi_t + \phi_{xx})(t, x) \; dx dt \\
	 = \int_0^{\tau} \int_0^1 f(t, x) \phi(t, x) dx dt -  \int_0^{\tau} \int_0^1 y(t, x) y_x(t, x) \phi(t, x) dx dt.
\end{multline}
\end{definition}

The following result on \eqref{ap-BG-sys} is standard.  

\begin{lemma}\label{lem-ap1} Let $ T > 0$, $f \in L^1((0, T), L^2(0, 1))$, and  $y_0 \in L^2(0, 1)$. Then system \eqref{ap-BG-sys} has a unique solution $y \in Y_T$. Moreover, there exists a positive constant $C$ depending only on $T$ such that
\begin{align}\label{lem-ap1-cl}
\| y \|_{Y_T} \leq C \Big(  \|  f \|_{L^1((0, T), L^2(0, 1))}  + \| y_0 \|_{L^2(0, 1)} \Big). 
\end{align}
\end{lemma}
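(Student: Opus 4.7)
The plan is to prove \Cref{lem-ap1} by a standard two-step argument: a local-in-time fixed point construction combined with a global energy bound that prevents blow-up. Since the linear problem \eqref{ap-heat-sys} is solved by \Cref{ap-heat-fx}, it is natural to define, for $\tau \in (0, T]$, the map $\Phi : Y_\tau \to Y_\tau$ sending $\tilde y$ to the unique weak solution $y$ of
\begin{align*}
	\begin{cases}
		y_t - y_{xx} = f - \tilde y\, \tilde y_x & \text{ in } (0, \tau) \times (0, 1), \\[4pt]
		y(t, 0) = y(t, 1) = 0 & \text{ in } (0, \tau), \\[4pt]
		y(0, \cdot) = y_0 & \text{ in } (0, 1).
	\end{cases}
\end{align*}
Since $H^1(0,1) \hookrightarrow L^\infty(0,1)$, one has $\|\tilde y \tilde y_x\|_{L^2(0,1)} \leq C \|\tilde y\|_{H^1(0,1)}^2$ for a.e.\ $t$, so $\tilde y \tilde y_x \in L^1((0,\tau), L^2(0,1))$ with $\|\tilde y \tilde y_x\|_{L^1((0,\tau), L^2(0,1))} \leq C \|\tilde y\|_{Y_\tau}^2$, and \Cref{ap-heat-fx} guarantees $\Phi(\tilde y) \in Y_\tau$. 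A parallel estimate on a difference $\tilde y^1 \tilde y^1_x - \tilde y^2 \tilde y^2_x = \tfrac{1}{2}\partial_x((\tilde y^1 - \tilde y^2)(\tilde y^1 + \tilde y^2))$ shows that $\Phi$ is a contraction on a small closed ball of $Y_\tau$, provided $\tau$ is small enough depending on $\|y_0\|_{L^2(0,1)}$ and $\|f\|_{L^1((0,T), L^2(0,1))}$. This yields a local solution and, simultaneously, uniqueness (any two solutions with the same data satisfy the same contraction bound after restricting to $[0, \tau]$).

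To extend this local solution to $[0, T]$ and obtain the global estimate \eqref{lem-ap1-cl}, I would derive the energy identity: test the equation with $y$ itself. The nonlinear contribution is
\begin{align*}
	\int_0^1 y(t, x) \, y(t, x) y_x(t, x) \, dx = \frac{1}{3}\int_0^1 \partial_x (y^3)(t, x) \, dx = 0,
\end{align*}
using the Dirichlet boundary conditions. Hence, for smooth data, one obtains
\begin{align*}
	\frac{1}{2}\frac{d}{dt} \|y(t, \cdot)\|_{L^2(0, 1)}^2 + \|y_x(t, \cdot)\|_{L^2(0, 1)}^2 = \int_0^1 f(t, x) y(t, x) \, dx,
\end{align*}
and Young's inequality combined with integration in time gives the bound
\begin{align*}
	\|y\|_{C([0, \tau], L^2(0, 1))} + \|y\|_{L^2((0, \tau), H^1(0, 1))} \leq C \left( \|f\|_{L^1((0, \tau), L^2(0, 1))} + \|y_0\|_{L^2(0, 1)} \right)
\end{align*}
on any interval of existence $[0, \tau]$, with $C$ independent of $\tau$. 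To justify this identity under the stated regularity $f \in L^1(L^2)$, $y_0 \in L^2$, I would regularize $f$ and $y_0$ (say by convolution and truncation so that $f_n \in L^2(L^2)$ and $y_{0,n} \in H^1_0$), apply \eqref{ap-heat-fx-XT} so that the approximating solutions $y_n$ lie in $X_T \cap H^1((0, T) \times (0, 1))$ where the above computation is rigorous, and then pass to the limit using the contraction estimate above to transfer the a priori bound to $y$.

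The a priori bound being uniform in $\tau$, a standard continuation argument (covering $[0, T]$ by finitely many small intervals on which the contraction argument applies, with each new initial datum controlled by the energy estimate) produces the global solution $y \in Y_T$ satisfying \eqref{lem-ap1-cl}. For uniqueness on the full interval, if $y^1, y^2 \in Y_T$ are two solutions, then $w = y^1 - y^2$ satisfies $w_t - w_{xx} = -\tfrac{1}{2}\partial_x((y^1 + y^2)w)$ with $w(0, \cdot) = 0$; testing with $w$ gives, after integration by parts,
\begin{align*}
	\frac{1}{2}\frac{d}{dt}\|w(t, \cdot)\|_{L^2(0, 1)}^2 + \|w_x(t, \cdot)\|_{L^2(0, 1)}^2 = \frac{1}{2}\int_0^1 (y^1 + y^2)(t, x) w(t, x) w_x(t, x) \, dx,
\end{align*}
and the right-hand side is bounded by $\tfrac{1}{2}\|w_x\|_{L^2(0, 1)}^2 + C \|y^1 + y^2\|_{L^\infty(0, 1)}^2 \|w\|_{L^2(0, 1)}^2$; Gronwall's inequality then forces $w \equiv 0$ since $\|y^1 + y^2\|_{L^\infty(0, 1)}^2$ is integrable in time on $[0, T]$ because $y^1, y^2 \in L^2((0, T), H^1(0, 1))$. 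The main obstacle is the slightly delicate justification of the energy identity at the level of regularity $f \in L^1(L^2)$, which is why the regularization step is essential; everything else is essentially a repackaging of linear heat estimates together with the crucial cancellation $\int_0^1 y^2 y_x \, dx = 0$.
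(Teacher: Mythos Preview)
The paper does not give a proof of this lemma, treating it as a standard well-posedness fact. Your outline is the standard approach and is essentially correct: local existence via a contraction in $Y_\tau$ built on the linear estimate \eqref{ap-heat-fx-L2Linfty}, a global a priori bound coming from the cancellation $\int_0^1 y^2 y_x\,dx = 0$, a continuation argument, and uniqueness via Gronwall.

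There is, however, one technical gap in your contraction step. Your bound $\|\tilde y\,\tilde y_x\|_{L^1((0,\tau), L^2)} \leq C \|\tilde y\|_{Y_\tau}^2$ carries no explicit $\tau$-dependence, and the constant in \eqref{ap-heat-fx-L2Linfty} does not shrink as $\tau \to 0$. As written, taking $\tau$ small therefore does \emph{not} make $\Phi$ a contraction on a ball whose radius is determined by $\|y_0\|_{L^2}$ and $\|f\|_{L^1((0,T),L^2)}$; the inequalities only close for small data, and the ``finitely many small intervals'' continuation then has no uniform lower bound on the step size. The standard fix is to sharpen the nonlinear estimate via the one-dimensional Gagliardo--Nirenberg inequality $\|v\|_{L^\infty(0,1)} \leq C\|v\|_{L^2(0,1)}^{1/2}\|v\|_{H^1(0,1)}^{1/2}$, which gives
\[
\|\tilde y\,\tilde y_x\|_{L^1((0,\tau), L^2)} \;\leq\; C\!\int_0^\tau \|\tilde y\|_{L^2}^{1/2}\|\tilde y\|_{H^1}^{3/2}\,dt \;\leq\; C\,\tau^{1/4}\,\|\tilde y\|_{C([0,\tau],L^2)}^{1/2}\|\tilde y\|_{L^2((0,\tau),H^1)}^{3/2} \;\leq\; C\,\tau^{1/4}\,\|\tilde y\|_{Y_\tau}^2,
\]
using H\"older in $t$ with exponents $4$ and $4/3$. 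The factor $\tau^{1/4}$ makes the local existence time depend only on the \emph{norms} of the data, after which your energy bound and continuation argument go through verbatim.
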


\section{Proof of Lemma \ref{lem-Ueandu}} \label{sect-lem-Ueandu}
We prove \eqref{lem-Ueandu-conclusion}. We have
\begin{align}
 \|U\|_{[H^{s - 1}(0, T)]^*} =  \sup_{\varphi \in H^{s - 1}(0, T), \| \varphi \|_{H^{s-1}(0, T)} \le 1} \left| \int_0^T U(t) \varphi(t) dt \right|. 	\label{lem-Ueandu-1}
\end{align}

Fix $\varphi \in H^{s - 1}(0, T)$.  Set 
$$
\Phi (t) = \int_t^T \varphi(s) \, ds \quad \text{ for } t \in [0, T]. 
$$
We have 
$$
\int_0^T U(t) \varphi(t) dt  = - \int_0^T U(t) \Phi'(t) dt = \int_0^T u(t) \Phi(t) dt. 
$$
It follows that 
$$
\Big| \int_0^T U(t) \varphi(t) dt \Big| \le \| u \|_{[H^{s}(0, T)]^*} \| \Phi \|_{H^{s}(0, T)}. 
$$
Since $\Phi' = - \varphi$ in $[0, T]$ and $\|\Phi\|_{L^2(0, T)} \le C \| \varphi \|_{L^2(0, T)}$, we have
$$
\| \Phi \|_{H^{s}(0, T)} \le C \| \varphi\|_{H^{s-1}(0, T)}. 
$$
Therefore,
$$
	\left| \int_0^T U(t) \varphi(t) dt \right| \leq C \|u \|_{[H^s(0, T)]^*} \| \varphi \|_{H^{s - 1}(0, T)}.
$$
Assertion \eqref{lem-Ueandu-conclusion} now follows from \eqref{lem-Ueandu-1}. \qed

\section{A controllability result for the heat equation via the moment method} \label{sect-moment}

Consider the control system 
\begin{align} 	\label{LinearBurgers}
\begin{cases}  
	y_t(t,x) - y_{xx}(t,x) = u(t) 	\quad &\text{ in } (0, T) \times (0, 1), \\[6pt]
	y(t, 0) = y(t, 1) = 0 &\text{ in } (0, T), \\[6pt]
	y(0, x) = y_0(x) &\text{ in } (0, 1).
\end{cases}
\end{align}
In this section, we study the initial data $y_0$ which can be steered to $0$ at time $T$ by using controls $u$ in $L^2(0, T)$.  Here is the main result.  

\begin{proposition}	\label{pro-heat-moment} 
Let $T > 0$.  Set 
\begin{align*}
    \mathscr{M} = \overline{\mathrm{span} \{\sin (k \pi x) \; | \;k \in \mathbb{N}^*, \; k \text{ even} \}}^{\| \|_{L^2(0, 1)}} \subset L^2((0, 1), \mR).
\end{align*}
There exists a bounded linear operator 
	\begin{align*}	
	\Gamma: \mathscr{M}^{\bot} \longrightarrow L^2((0, T), \mR)
	\end{align*}
	 such that for every $y_0 \in \mathscr{M}^{\bot}$, we have
	 \begin{align*}
	 	y(T, \cdot) = 0,
	\end{align*}	  
	 where $y \in Y_T$ is the solution of \eqref{LinearBurgers} with $u = \Gamma(y_0)$.
\end{proposition}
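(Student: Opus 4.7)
The plan is to reduce the controllability statement to a classical moment problem for real exponentials and to solve it with a Fattorini--Russell biorthogonal family. First I would expand $y_0 \in \mathscr{M}^{\bot}$ in the orthonormal basis $\{e_k\}_{k \in \mN^*}$: since $y_0$ is orthogonal to every $e_k$ with $k$ even, one has $y_0 = \sum_{k \text{ odd}} y_0^k e_k$ with $\sum_k |y_0^k|^2 = \|y_0\|_{L^2(0,1)}^2$. For a given $u \in L^2(0,T)$ the unique weak solution of \eqref{LinearBurgers} decomposes as $y(t,x) = \sum_k a_k(t) e_k(x)$ with
$$ a_k(t) = e^{-\pi^2 k^2 t}\, y_0^k + \omega_k \int_0^t e^{-\pi^2 k^2 (t-s)} u(s)\, ds, \qquad \omega_k = \int_0^1 e_k(x)\,dx = \frac{\sqrt{2}\,(1-(-1)^k)}{k\pi}, $$
so $\omega_k = 0$ for $k$ even and $\omega_k = 2\sqrt{2}/(k\pi)$ for $k$ odd. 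The even modes of $y(T,\cdot)$ therefore vanish automatically (this is where the hypothesis $y_0 \in \mathscr{M}^{\bot}$ is used), while $y(T,\cdot)=0$ becomes the moment problem
$$ \int_0^T e^{\pi^2 k^2 s}\, u(s)\, ds \;=\; -\frac{y_0^k}{\omega_k} \;=:\; m_k(y_0) \qquad \text{for all odd } k. $$

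To gain super-exponential decay in the data, I would search for $u$ supported in $[T/2,T]$ and, after setting $\tilde u(\tau) = u(T/2 + \tau)$, rewrite the moment conditions as
$$ \int_0^{T/2} e^{\pi^2 k^2 \tau}\, \tilde u(\tau)\, d\tau \;=\; e^{-\pi^2 k^2 T/2}\, m_k(y_0) \;=:\; \tilde m_k, \qquad |\tilde m_k| \le C\, k\, e^{-\pi^2 k^2 T/2}\, |y_0^k|. $$
Now I would invoke the Fattorini--Russell biorthogonal construction applied to the sequence $\lambda_k = \pi^2 k^2$ with $k$ odd (which inherits $\sum 1/\lambda_k < \infty$ and a uniform spectral gap from $\{\pi^2 k^2\}_{k \in \mN^*}$): there exists a family $\{\tilde q_k\}_{k \text{ odd}} \subset L^2(0,T/2)$ satisfying
$$ \int_0^{T/2} \tilde q_j(\tau)\, e^{\pi^2 k^2 \tau}\, d\tau = \delta_{jk}, \qquad \|\tilde q_k\|_{L^2(0,T/2)} \le C(T)\, e^{C(T)\, k}. $$

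With this family in hand, the natural candidate is
$$ u(t) := \begin{cases} 0 & \text{if } t \in (0,T/2), \\[4pt] \displaystyle\sum_{k \text{ odd}} \tilde m_k\, \tilde q_k(t - T/2) & \text{if } t \in (T/2, T), \end{cases} $$
and convergence of the series in $L^2(0,T)$ follows from the triangle inequality together with Cauchy--Schwarz,
$$ \|u\|_{L^2(0,T)} \le \sum_{k \text{ odd}} |\tilde m_k|\, \|\tilde q_k\|_{L^2(0,T/2)} \le C \Bigl( \sum_{k \text{ odd}} k^2\, e^{-\pi^2 k^2 T + 2 C(T)\, k} \Bigr)^{1/2} \|y_0\|_{L^2(0,1)}, $$
the first series being finite because the Gaussian factor dominates the exponential. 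By biorthogonality the moment conditions are fulfilled for every odd $k$, so $y(T,\cdot)=0$; linearity and boundedness of $\Gamma : y_0 \mapsto u$ follow from the linearity of $y_0 \mapsto (\tilde m_k)_k$ and the fact that the biorthogonal family is fixed once and for all.

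The main obstacle is the biorthogonal construction itself, but this is the classical Fattorini--Russell result whose proof (via interpolation in Paley--Wiener-type spaces and Blaschke products) would be invoked rather than reproved; the rest of the argument only exploits that the $e^{C(T)k}$ growth of $\|\tilde q_k\|$ is absorbed by the Gaussian factor $e^{-\pi^2 k^2 T/2}$ produced by the preliminary free evolution on $[0,T/2]$. A duality/HUM-based alternative, via the observability inequality $\|\phi(0,\cdot)\|_{L^2(0,1)}^2 \le C \int_0^T \bigl|\int_0^1 \phi(t,x)\,dx\bigr|^2\,dt$ for the backward heat equation restricted to odd modes, would work equally well, but the moment-method route is the most transparent given that the control operator has rank one.
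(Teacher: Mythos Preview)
Your proposal is correct and follows essentially the same route as the paper: reduce $y(T,\cdot)=0$ to the moment problem $\int_0^T e^{\pi^2 k^2 s}u(s)\,ds = c_k$ for odd $k$ and solve it via a biorthogonal family of Fattorini--Russell type. The paper's own proof is in fact terser than yours: after deriving the moment conditions it simply cites the standard moment-method literature (Tenenbaum--Tucsnak) and omits the details, whereas you spell out the smoothing step on $[0,T/2]$ and the series construction explicitly; both arguments are the same in substance.
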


\begin{proof}[Proof of \Cref{pro-heat-moment}]  Let $y_0 \in L^2(0,1)$ and  $u \in L^2(0, T)$. Let $y \in Y_T$ be the solution of \eqref{LinearBurgers}. We have, for $k \in \mN^*$,
 \begin{multline*}  
    e^{ \pi^2 k^2 T} \int_0^1  y(T, x) \sin (k \pi x)  dx - \int_0^1 y_0(x) \sin (k \pi x)  dx =  \left(\int_0^1 \sin (k \pi x)  dx \right) \left(\int_0^T u(t) e^{\pi^2 k^2 t} dt \right) .
\end{multline*}
Note that for every $k \in \mN^*$,
$$
\int_0^1 \sin (k \pi x)  dx =  \frac{1 - (-1)^k}{k \pi}.
$$
One can thus steer $y_0$ to 0 at time $T$ if and only if $y_0 \in \mathscr{M}^{\bot}$ and for every odd $k \in \mN^*$,
\begin{align*}
 \int_0^T u(t) e^{\pi^2 k^2 t} dt = c_k , 
\end{align*}
where 
$$
c_k = -  \frac{k \pi}{2}  \int_0^1 y_0(x) \sin (k \pi x)  dx.  
$$
The proof is now based on the standard moment method. For example, one can follow almost the same arguments as in \cite{Tenenbaum-Tucsnak}. The details are omitted. 
\end{proof}

\providecommand{\bysame}{\leavevmode\hbox to3em{\hrulefill}\thinspace}
\providecommand{\MR}{\relax\ifhmode\unskip\space\fi MR }
\providecommand{\MRhref}[2]{%
  \href{http://www.ams.org/mathscinet-getitem?mr=#1}{#2}
}
\providecommand{\href}[2]{#2}

\end{document}